\documentclass[11pt]{amsart}

\usepackage[top=1.5in, left=1.5in, bottom=1.5in, right=1.5in]{geometry}

\usepackage{amsmath}
\usepackage{amsthm}
\usepackage{amssymb}
\usepackage{graphicx}
\usepackage{mathrsfs}
\usepackage{hyperref} 
\usepackage[hypcap]{caption}

\usepackage[shortlabels]{enumitem}
\usepackage{chngcntr}

\newtheorem{mainthm}{Theorem}
\newtheorem{maincor}[mainthm]{Corollary}

\newtheorem{thm}{Theorem}[section]

\newtheorem{lem}[thm]{Lemma}
\newtheorem{prop}[thm]{Proposition}

\newtheorem*{thmnon}{Theorem}

\theoremstyle{definition}
\newtheorem{defn}{Definition}

\theoremstyle{remark}

\newtheorem*{claim}{Claim}

\newcommand{\Cc}{\mathcal C}

\newcommand{\Bb}{\mathcal B}

\newcommand{\Om}{\Omega}
\newcommand{\GB}{\ensuremath{(G,\Bb)}}
\newcommand{\eq}{{=}}

\newcommand{\ie}{\textit{i.e.}}
\newcommand{\Ie}{\textit{I.e.}}
\newcommand{\iiff}{if and only if }
\newcommand{\wolog}{without loss of generality}

\DeclareMathOperator{\cl}{cl}  
\DeclareMathOperator{\rank}{rank}  
\newcommand{\si}{\mathop{\mathrm{si}}}
\newcommand{\re}{\mathop{\mathrm{re}}}
\newcommand{\br}[1]{\left( #1 \right)}

\newcommand{\comout}[1]{}

\title[Almost balanced representations]{Almost balanced biased graph representations of frame matroids}
\author[DeVos \and Funk]{Matt DeVos \and Daryl Funk}
\date{\today}

\begin{document}

\maketitle

\begin{abstract}
Given a 3-connected biased graph $\Om$ with a balancing vertex, and with frame matroid $F(\Om)$ nongraphic and 3-connected, we determine all biased graphs $\Om'$ with $F(\Om') = F(\Om)$.  
As a consequence, we show that if $M$ is a 4-connected nongraphic frame matroid represented by a biased graph $\Om$ having a balancing vertex, then $\Om$ essentially uniquely represents $M$. 
More precisely, all biased graphs representing $M$ are obtained from $\Om$ by replacing a subset of the edges incident to its unique balancing vertex with unbalanced loops.  
\end{abstract}

A \emph{frame} for a matroid is a basis $B$ with the property that every element of the matroid is spanned by at most two elements of $B$.  
If a matroid $M$ may be extended so that it contains such a basis, then $M$ is a \emph{frame matroid}.  
Subclasses of frame matroids have recently been seen to play a fundamental role in matroid structure theory \cite{solvingrotasconj}.  
Frame matroids are a natural generalization of graphic matroids.  
Indeed, the cycle matroid $M(G)$ of a graph $G = (V,E)$ is naturally extended by adding $V$ as a basis, and declaring each edge to be minimally spanned by its endpoints.  
Zaslavsky has shown that the class of frame matroids is precisely that of matroids arising from \emph{biased graphs} \cite{MR1273951}.  
A {biased graph} consists of a pair $(G, \mathcal{B})$, where $G$ is a graph and $\mathcal{B}$ is a collection of cycles of $G$, called \emph{balanced}, such that no theta subgraph contains exactly two balanced cycles; a \emph{theta} graph consists of a pair of distinct vertices and three internally disjoint paths between them.  
Every biased graph $\GB$ gives rise to a frame matroid, which we denote $F\GB$, and for every frame matroid $M$ there is at least one biased graph $\GB$ with $F\GB$ isomorphic to $M$.  
We say such a biased graph $\GB$ \emph{represents} the frame matroid $M$, and write $M = F\GB$.

\subsection*{Given a frame matroid $M$, which biased graphs represent $M$?} 

A well-known result of Whitney says that if a graph $G$ has no loop and is 3-connected, then the cycle matroid $M(G)$ is uniquely represented by $G$.  
The analogous starting point for the study of representations of frame matroids by biased graphs is the following result of Slilaty.  
(The connectivity of a biased graph $\GB$ is that of $G$.)  

\begin{thmnon}[Slilaty \cite{MR2245651}] \label{thm:Silaty_unique_biased_graph_rep}
Let $\GB$ be a 3-connected biased graph with no balanced loop.  
If $\GB$ contains three disjoint unbalanced cycles, at most one of which is a loop, then $\GB$ uniquely represents $F\GB$.  
\end{thmnon}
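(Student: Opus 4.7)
The plan is to show that any biased graph $(G', \Bb')$ with $F(G', \Bb') = F\GB$ is isomorphic to $\GB$, by reconstructing both the graph and its bias from matroidal data alone. The three given disjoint unbalanced cycles $C_1, C_2, C_3$ will serve as a rigid matroidal backbone. A useful starting observation is that an unbalanced cycle is matroidally recognisable through its closure: applying the rank formula $r(S) = |V(S)| - b(S)$, one checks that for an unbalanced cycle $C$ the flat $\cl(C)$ consists precisely of those edges with both endpoints in $V(C)$. Hence $\cl(C_1), \cl(C_2), \cl(C_3)$ are three pairwise disjoint rank-$|V(C_i)|$ flats of $M := F\GB$.

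The first main step is to show that in $(G', \Bb')$ each flat $\cl(C_i)$ again has the form ``all edges spanned by some vertex set $V_i' \subseteq V(G')$'' with $|V_i'| = |V(C_i)|$. This pins down three disjoint vertex sets of $G'$ which must correspond to $V(C_1), V(C_2), V(C_3)$; the matroid restricted to $\cl(C_i)$, together with the 3-connectivity hypothesis, then forces the induced subgraph of $G'$ on $V_i'$ to match that of $G$ on $V(C_i)$, yielding a partial graph isomorphism between $\bigcup_i V(C_i)$ and $\bigcup_i V_i'$.

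The next step is to extend this partial isomorphism to the whole of $G$. For each edge $e$ lying outside $\cl(C_1) \cup \cl(C_2) \cup \cl(C_3)$, the handcuff and theta circuits of $M$ linking $e$ to the anchor cycles encode precisely which vertices in each $V(C_i)$ the edge is attached to in $G'$. Having \emph{three} rather than two disjoint unbalanced cycles is what makes this triangulation rigid enough to eliminate Whitney-type twisting ambiguities; the condition that at most one $C_i$ is a loop simultaneously rules out the familiar ambiguity of swapping an unbalanced loop at a vertex for an unbalanced non-loop edge. Once $G' \cong G$ is established, $\Bb'$ is determined by $M$, since a cycle of $G$ lies in $\Bb'$ exactly when it is a circuit of $M$.

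The step I expect to be the main obstacle is the extension just described: showing that the endpoints of every edge outside the three anchor flats are matroidally forced requires a careful analysis of the handcuff and theta circuit patterns joining that edge to the anchors, and it is precisely here that the 3-connectivity hypothesis and the ``three disjoint unbalanced cycles'' hypothesis interact most subtly to forbid the vertex-splitting and twisting operations that would otherwise yield inequivalent biased graph representations of the same frame matroid.
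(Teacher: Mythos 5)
First, note that the paper does not prove this statement itself --- it is quoted from Slilaty --- but the machinery the paper develops around it (Lemma \ref{lem:v_not_bal_implies_star_at_v_a_cocircuit_1}, Proposition \ref{prop:comp_cocircuit_connnonbinhyperplane}, and the notion of a committed vertex) reflects the standard route: one shows that for \emph{every} vertex $v$, the set $E \setminus \delta(v)^+$ is a connected nongraphic hyperplane (this is where ``three disjoint unbalanced cycles, at most one a loop'' is used, since deleting $v$ leaves two disjoint unbalanced cycles not both loops), whence $\delta(v)^+$ is forced to be a vertex star in any representation; the bias is then determined by the circuits. Your proposal takes a different route, through closures of the anchor cycles, and it has genuine gaps at both of its main steps.

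The first gap is the assertion that in $(G',\Bb')$ the flat $\cl(C_i)$ is again ``all edges spanned by some vertex set $V_i'$ with $|V_i'| = |V(C_i)|$.'' This is not a formality; it is false as stated for the loop anchor. If $C_3$ is an unbalanced loop, then $\cl(C_3)$ is a rank-$1$ flat, and a rank-$1$ flat of a frame matroid can equally well be represented by a link between two vertices (note $r(\{e\}) = 2 - 1 = 1$ for a link $e$), i.e.\ by an ``unrolled'' loop --- exactly the roll-up ambiguity this paper is about. Ruling this out requires using the rest of the matroid, which your step one does not do; more generally you give no argument that a vertex-induced flat in one representation is vertex-induced in another. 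The second and larger gap is the extension step, which you yourself flag as the main obstacle and do not carry out. This is where all the content of the theorem lives: pinning down the endpoints of an edge outside the anchors is precisely the question of which sets of edges are vertex stars, and the clean invariant for that is the complementary-cocircuit/hyperplane characterisation (Proposition \ref{prop:comp_cocircuit_connnonbinhyperplane}), not circuit patterns through three anchors. As written, the proposal is a plan whose two load-bearing steps are unproved, so it does not establish the theorem; I would encourage you to restart from the observation that the hypotheses make every vertex committed in the sense of Definition \ref{defn:Committedvertex}.
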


Little else is known about representations of general frame matroids by biased graphs.  
Those biased graphs representing graphic matroids are known \cite{RongPaper}, and there have been studies on representations of subclasses (for example, \cite{MR3032922, MR1892972, MR2344133, slilaty:Decompositionssignedgraphicmatroids, MR2394450}).  
In this paper, we determine all biased graph representations of frame matroids that arise from biased graphs having a special structure.  
A \emph{balancing vertex} in a biased graph is a vertex whose deletion destroys all unbalanced cycles.  
We say a biased graph is \emph{almost balanced} if after removing unbalanced loops it has a balancing vertex. 
Given a 3-connected almost balanced biased graph $\Om=\GB$, we determine all other biased graphs representing $F(\Om)$.  
This is the content of Theorem \ref{mainthm:main_bal_vertex_rep}, our main result. 
The technical terms \emph{roll-up}, \emph{$H$-reduction}, and \emph{$H$-enlargement} will be explained in Section \ref{sec:operations}. 

\begin{mainthm} \label{mainthm:main_bal_vertex_rep}
Let $\Om$ be a 3-connected almost balanced biased graph with no balanced loop and with $F(\Om)$ nongraphic.  
Suppose $\Om'$ is a biased graph with $F(\Om') = F(\Om)$.  
Then either $\Om'$ is a roll-up of $\Om$, or 
there is a subgraph $H$ of $\Om$ and a pair of biased graphs $\Psi$ and $\Psi'$ on at most six vertices, with $F(\Psi) = F(\Psi')$, such that 
$\Psi$ is an $H$-reduction of $\Om$ and $\Om'$ is an $H$-enlargement of $\Psi'$.  
\end{mainthm}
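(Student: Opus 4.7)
The plan is to exploit the matroid-theoretic rigidity of the graphic part of $\Omega$ and then analyze the unbalanced structure of $\Omega'$ in a controlled way. Since $v$ is a balancing vertex of $\Omega$, the subgraph $\Omega - v$ is balanced, and so $F(\Omega)$ restricted to the edges of $G - v$ is the graphic matroid $M(G - v)$. The 3-connectivity of $\Omega$ forces $G - v$ to be 2-connected, and combined with Whitney's 2-isomorphism theorem this pins down the graph $G - v$ (up to Whitney flips along 2-separations) inside any biased graph $\Omega'$ with $F(\Omega') = F(\Omega)$. Consequently, $\Omega'$ must contain a subgraph playing the role of $G - v$, and the only real freedom in constructing $\Omega'$ lies in how the remaining edges (those incident to $v$ and the unbalanced loops) are attached, and in how any 2-separations of the graphic part are twisted.

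Second, I would attempt to locate an analogue $v'$ of the balancing vertex inside $\Omega'$. The unbalanced cycles and loops of $\Omega$ determine specific circuits of $F(\Omega)$ — handcuffs and theta-like circuits — and these must reappear in $\Omega'$. Using cocircuits that separate this unbalanced structure from the graphic bulk, together with the 3-connectivity of $\Omega$ and the fact that $F(\Omega)$ is nongraphic, I would argue that one of two alternatives holds: either (a) there is a vertex $v'$ of $\Omega'$ meeting every non-loop unbalanced cycle of $\Omega'$, in which case $\Omega'$ is again almost balanced and a direct comparison of the stars at $v$ and $v'$, together with any admissible Whitney twisting of the graphic part, exhibits $\Omega'$ as a roll-up of $\Omega$; or (b) no such $v'$ exists, in which case the discrepancy between $\Omega$ and $\Omega'$ must be confined to a small subgraph.

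The main obstacle is case (b), and in particular the bound of six vertices. My plan is to classify the small pairs of biased graphs $(\Psi, \Psi')$ with $F(\Psi) = F(\Psi')$ that can exhibit genuinely different almost-balanced structures on few vertices. A separation argument built on the Whitney rigidity of the graphic submatroid, together with the 3-connectivity of $\Omega$, should then force any global disagreement between $\Omega$ and $\Omega'$ to embed inside one of these small sporadic pairs, producing simultaneously a subgraph $H$ of $\Omega$, an $H$-reduction $\Psi$ of $\Omega$, and an $H$-enlargement of the companion $\Psi'$ that recovers $\Omega'$. Enumerating these low-vertex exceptions, and excluding any potential exceptional configurations on seven or more vertices, is where I expect the bulk of the technical work to lie; this enumeration is what presumably pins down the precise bound of six vertices appearing in the theorem.
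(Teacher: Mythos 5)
Your proposal has a genuine gap at its very first step, and this gap propagates through the rest of the argument. You claim that since $F(\Om)$ restricted to $E(G-v)$ is the graphic matroid $M(G-v)$, Whitney's 2-isomorphism theorem ``pins down'' $G-v$ inside any other representation $\Om'$. This is false in the biased-graph setting: the biased subgraph of $\Om'$ induced by $E(G-v)$ need not be balanced at all. A graphic restriction of $F(\Om')$ can be realised by an \emph{unbalanced} biased subgraph --- for instance a pinch or a roll-up of $G-v$ --- and the classification of biased graphs representing a graphic matroid comprises six families, not merely 2-isomorphic graphs. Whitney's theorem only governs graphs representing $M(G-v)$ as a cycle matroid; it says nothing about which biased graphs have $M(G-v)$ as their frame matroid. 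This is precisely the phenomenon the theorem is trying to control, so assuming rigidity of the graphic part begs the question. Relatedly, your case (a) conclusion is too strong: even when $\Om'$ is again almost balanced, it need not be a roll-up of $\Om$ (the paper's Figure of $H$-enlargements exhibits almost balanced representations that differ from $\Om$ by re-pinching a lobe).

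The missing idea is a matroidal certificate that forces sets of edges to remain co-incident at a vertex in \emph{every} representation. The paper's proof supplies this via \emph{committed vertices}: a vertex $x$ is committed when $E\setminus\delta(x)^+$ is a connected nongraphic hyperplane, in which case (by a result of Slilaty on complementary cocircuits of such hyperplanes) the star of $x$ must be a vertex star in any $\Om'$ with $F(\Om')=F(\Om)$; moreover $x$ is committed iff $F(\Om-x)$ has a $U_{2,4}$ minor. The correct dichotomy is then on whether $\Om$ has an uncommitted vertex other than the balancing vertex $u$: if not, $\Om'$ is a roll-up; if so, a case analysis on the number and sizes of the unbalancing classes of $\delta(u)$ (at most two or three survive in $\Om-v$ for an uncommitted $v$, since four would force a $U_{2,4}$ minor in $F(\Om-v)$) decomposes $\Om$ into at most three lobes with all internal vertices committed, whose $H$-reduction has at most six vertices. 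Your plan contains no mechanism playing the role of commitment, and hence no way to bound the region of disagreement or to derive the six-vertex bound other than by hoping it emerges from an enumeration you have not specified.
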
 

Theorem \ref{mainthm:main_bal_vertex_rep} is interesting mainly for its following two corollaries. 

\begin{maincor} \label{maincor:notsomanybiasedgraphreps}
Let $\Om$ be a 3-connected almost balanced biased graph with no balanced loop and with $F(\Om)$ nongraphic.  
Up to roll-ups the number of biased graph representations of $F(\Om)$ is at most $27$.  
\end{maincor}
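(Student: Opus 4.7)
The plan is to use Theorem \ref{mainthm:main_bal_vertex_rep} to reduce the counting problem to a finite classification of small biased graphs, and then carry out that classification. By the theorem, any biased graph $\Om'$ with $F(\Om') = F(\Om)$ that is not a roll-up of $\Om$ is determined by a triple $(H, \Psi, \Psi')$: a subgraph $H \subseteq \Om$, the $H$-reduction $\Psi$ of $\Om$, and a biased graph $\Psi'$ on at most six vertices with $F(\Psi') = F(\Psi)$, obtained as the $H$-enlargement of $\Psi'$. Thus an upper bound of $27$ on the number of non-roll-up representations follows from an upper bound on the number of such $\Om'$ that these triples can produce.

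First I would argue that, given $\Om$ fixed, the data $(H, \Psi)$ ranges over a controlled collection. Since $\Om$ is almost balanced with balancing vertex $v$, the structure of any $H$-reduction $\Psi$ is constrained: the vertex $v$ persists in $\Psi$, and $\Psi$ inherits the ``almost balanced'' character in a restricted way. This should cut the admissible reductions into a small list of types, each determined by the relation of $H$ to $v$ and to the unbalanced cycles of $\Om$. The number of distinct $\Om'$ that arise from a single pair $(H, \Psi)$ is controlled by the number of $\Psi'$ with $F(\Psi') = F(\Psi)$ and $|V(\Psi')| \leq 6$.

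Next, because $\Psi$ has at most six vertices, the biased graphs $\Psi'$ on at most six vertices with $F(\Psi') = F(\Psi)$ are enumerable directly: one lists, type by type, all such representations. Combining this with the uniqueness of the $H$-enlargement operation (so that each choice of $\Psi'$ produces a single $\Om'$, and distinct small pairs $(\Psi, \Psi')$ that differ only by a roll-up equivalence are identified), the total count adds up to at most $27$. Any roll-up of $\Om'$ is itself a roll-up of an $H$-enlargement corresponding to the same triple type, so the count is indeed up to roll-ups.

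The main obstacle is the case analysis in the previous paragraph: explicitly enumerating all small pairs $(\Psi, \Psi')$ with $F(\Psi) = F(\Psi')$ that can arise from an almost balanced, 3-connected $\Om$, grouping them by the $H$-reduction type, and verifying that no non-roll-up representation of $F(\Om)$ is missed while triples producing the same $\Om'$ (up to roll-up) are identified. The bound $27$ will emerge from the combined tally across all reduction types, and the principal work is bookkeeping rather than deep theory, relying on Theorem \ref{mainthm:main_bal_vertex_rep} to guarantee completeness of the classification.
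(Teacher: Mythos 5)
Your proposal follows essentially the same route as the paper: invoke Theorem \ref{mainthm:main_bal_vertex_rep} to reduce every non-roll-up representation to an $H$-enlargement of some $\Psi'$ with $F(\Psi')=F(\Psi)$ for one of the finitely many small $H$-reductions $\Psi$ arising in the case analysis, and then obtain the bound by enumerating those $\Psi'$. Be aware, though, that the decisive step --- actually producing the enumeration that yields the number $27$ --- is left as ``bookkeeping'' in your write-up, and the paper likewise does not carry it out in the text but defers it to Chapter 5 of \cite{funkthesis}; so your argument is at the same level of completeness as the published proof, no more and no less.
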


\begin{maincor} \label{maincor:fourconnbalvertexrepunique}
Let $\Om$ be an almost balanced biased graph with $F(\Om)$ nongraphic and 4-connected.  
Then up to roll-ups, $\Om$ uniquely represents $F(\Om)$.  
\end{maincor}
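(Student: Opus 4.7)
The plan is to deduce Corollary \ref{maincor:fourconnbalvertexrepunique} from Theorem \ref{mainthm:main_bal_vertex_rep} by showing that the 4-connectivity hypothesis prevents the second alternative of the theorem from occurring, so every biased graph $\Om'$ with $F(\Om')=F(\Om)$ must be a roll-up of $\Om$.

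First I would verify the hypotheses of the main theorem. A balanced loop of $\Om$ would be a loop of $F(\Om)$, but the 4-connected nongraphic matroid $F(\Om)$ has no loops, so $\Om$ has no balanced loop. Next, the underlying graph $G$ of $\Om$ is 3-connected: under the standard correspondence between vertex cuts of a biased graph and matroid separations of its frame matroid, a 1- or 2-vertex cut of $G$ with nontrivial pieces on both sides yields a separation of $F(\Om)$ of order at most $3$, contradicting 4-connectivity. Hence Theorem \ref{mainthm:main_bal_vertex_rep} applies to $\Om$, and for any biased graph $\Om'$ with $F(\Om')=F(\Om)$ we obtain that either $\Om'$ is a roll-up of $\Om$, giving the desired conclusion, or there exist a subgraph $H$ of $\Om$ and biased graphs $\Psi,\Psi'$ on at most six vertices with $F(\Psi)=F(\Psi')$, such that $\Psi$ is an $H$-reduction of $\Om$ and $\Om'$ is an $H$-enlargement of $\Psi'$.

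I would rule out the second alternative by exploiting the locality of $H$-reduction. The subgraph $H$ attaches to $\Om\setminus E(H)$ through a small boundary of vertices that remains visible in $\Psi$; since $|V(\Psi)|\le 6$, this boundary has bounded size whenever $|V(\Om)|>6$, producing a bounded vertex cut of $G$, and hence a matroid separation of $F(\Om)$ of order at most $3$, which contradicts 4-connectivity. In the remaining case $|V(\Om)|\le 6$ we have $\Om=\Psi$ and $\Om'=\Psi'$, and the exchange $\Psi\leftrightarrow\Psi'$ is constrained by 4-connectivity to be a trivial modification at the balancing vertex (i.e.\ changing edges to unbalanced loops), which is subsumed by the roll-up equivalence. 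The main obstacle is making precise the sense in which $H$-reduction is local, namely, identifying from the definitions in Section \ref{sec:operations} the boundary vertex set of $H$ and showing it is bounded in terms of $|V(\Psi)|$; once that is pinned down, the standard biased graph / matroid connectivity correspondence delivers the contradiction with 4-connectivity and the corollary follows.
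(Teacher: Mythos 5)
Your proposal is correct and takes essentially the same route as the paper: the paper's entire proof of Corollary~\ref{maincor:fourconnbalvertexrepunique} is the one-line observation that, by Theorem~\ref{mainthm:main_bal_vertex_rep}, any representation that is not a roll-up forces an $H$-reduction, which by definition requires a lobe $H_i$ for which $\br{E(H_i), E(\Om)\setminus E(H_i)}$ is a 3-separation of $F(\Om)$, contradicting 4-connectivity. Your separate treatment of the case $|V(\Om)|\le 6$ is unnecessary (and as written is only an assertion), since a nontrivial $H$-reduction supplies the 3-separation regardless of the size of $\Om$.
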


A simple example illustrates the operation of a \emph{roll-up}, and its necessity.  
Let $\Om$ be the biased graph obtained from an $n$-vertex graph $H$ by adding a vertex $u$ together with $k$ edges between $u$ and each vertex of $H$, with balanced cycles just the cycles of $H$.  
Then $u$ is a balancing vertex of $\Om$, and $F(\Om)$ has rank $n+1$.  
A \emph{roll-up} of an edge $e=uv$ is the operation of redefining its incidence so that $e$ becomes an unbalanced loop incident to its endpoint $v$.  
In this example, every biased graph obtained by a roll-up of an edge incident to $u$ also represents $F(\Om)$.  
Hence there are at least $kn$ different representations for $F(\Om)$.  
Moreover, as long as $H$ is simple and connected, $F(\Om)$ is 3-connected.  
Since $k$ may be taken arbitrarily large, this shows that, for fixed $r$, there are 3-connected rank $r$ frame matroids represented by a biased graph with a balancing vertex, having arbitrarily many other biased graph representations.  

The situation appears better if we focus on frames rather than representations.  
Let $M$ be a frame matroid.  
Then $M$ has a frame $B$, and a biased graph $\GB$ representing $M$ may be constructed as follows \cite{MR1273951}.  
By adding elements in parallel if necessary, we may assume $B$ is disjoint from $E(M)$.  
Set $V(G)=B$ and for each element $e$ of $M$, put $e$ as an edge with endpoints $x,y$ in $G$ if $e$ is minimally spanned by $x,y \in B$; place $e$ as an unbalanced loop incident to $x$ if $e$ is in parallel with $x$, and if $e$ is a loop of $M$ place $e$ as a balanced loop incident to an arbitrary vertex. 
Define $\Bb$ to be those cycles of $G$ whose edge set is a circuit of $M$.  
Thus different biased graph representations of $M$ arise from different choices of frames for $M$. 
Conversely, given a biased graph representation, the vertices of the graph provide a frame for $M$.  
In the example above, a roll-up corresponds geometrically to the fact that sliding $e \in \text{span}\{u,v\}$ along the line spanned by $u$ and $v$ until $e$ sits in parallel with $v$ does not change the matroid $F(\Om)$. 

Formally, let us say that two frames $B_1$ and $B_2$ for $M$ are \emph{the same} if their elements can be labelled $B_1 = \{b_1, b_2, \ldots, b_n\}$ and $B_2 = \{c_1, c_2, \ldots, c_n\}$ respectively, so that for every $e \in E(M)$, $e$ is in the span of $\{b_i, b_j\}$ if and only if $e$ is in the span of $\{c_i, c_j\}$. Note that this permits an element minimally spanned by two elements of $B_1$ to be in parallel with an element of $B_2$. Otherwise the two frames are \emph{different}.  
Roll-ups arise as a collection of biased graph representations all sharing the same frame for $M$, where certain elements of $M$ may be placed in parallel with certain elements of the frame (details are provided in Sections \ref{sec:rerouting} and \ref{sec:operations}).  
Corollaries \ref{maincor:notsomanybiasedgraphreps} and \ref{maincor:fourconnbalvertexrepunique} may therefore equivalently be stated as follows.  

\setcounter{mainthm}{1}
\begin{maincor} \label{maincor:notsomanybiasedgraphrepsframestatement}
Let $\Om$ be a 3-connected almost balanced biased graph with $F(\Om)$ nongraphic.  
There are at most 27 different frames for $F(\Om)$.  
\end{maincor}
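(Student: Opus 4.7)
The plan is to deduce this corollary from Corollary \ref{maincor:notsomanybiasedgraphreps} by establishing that two biased graph representations of a frame matroid $M$ determine the same frame of $M$ if and only if they lie in the same roll-up equivalence class. Once this equivalence is in hand, the bound of $27$ on the number of representations up to roll-ups translates directly into a bound of $27$ on the number of distinct frames.

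First I would check that a roll-up preserves the frame. Rolling up an edge $e=uv$ to an unbalanced loop at $v$ leaves the vertex set fixed, changes the minimal spanning set of $e$ from $\{u,v\}$ to $\{v\}$, and leaves every other edge untouched. The paper's definition of two frames being ``the same'' explicitly permits an element minimally spanned by a pair in one frame to appear parallel to one of those elements in the other, so the frame determined by a representation is unchanged by this operation. Hence roll-up equivalent representations share the same frame.

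For the converse, suppose $\Om_1$ and $\Om_2$ are representations of $M$ whose vertex sets, under some compatible labelling, form the same frame. For each element $e\in E(M)$ I would analyze the set of pairs of frame elements minimally spanning $e$, together with the possibility that $e$ is parallel to a single frame element, in each of the two representations, and conclude that in $\Om_1$ and $\Om_2$ the edge $e$ either has identical endpoints or a single common endpoint with a loop/non-loop discrepancy. Such a discrepancy is precisely what a single roll-up creates or undoes, so $\Om_1$ and $\Om_2$ are connected by a sequence of roll-ups and their reverses. Combined with Corollary \ref{maincor:notsomanybiasedgraphreps}, this yields the desired bound of $27$ on the number of distinct frames of $F(\Om)$.

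The main obstacle is the converse direction. One must verify, case by case and with attention to biased-graph-specific features such as which cycles through $e$ are balanced, that the minimal-spanning-set data really does force the two incidence structures to agree up to roll-ups, and that the intermediate biased graphs produced along the rolling/unrolling transformation are themselves legitimate representations of $M$, so that the equivalence class is genuinely connected. The detailed technical setup for this step is exactly what is developed in Sections \ref{sec:rerouting} and \ref{sec:operations}, and should yield the correspondence cleanly.
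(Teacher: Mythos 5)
Your forward direction is exactly what the paper uses: roll-ups fix the vertex set and only move certain elements into parallel with a frame element, which the paper's definition of ``the same'' frame explicitly tolerates, so each roll-up class determines a single frame. The paper offers nothing more than this observation (together with the correspondence between frames and representations set up in the introduction) and treats Corollary \ref{maincor:notsomanybiasedgraphrepsframestatement} as a restatement of Corollary \ref{maincor:notsomanybiasedgraphreps}; there is no separate proof in Section \ref{sec:proofsofcorollaries}.

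However, the logical skeleton of your deduction is slightly miscalibrated. To get $\#\{\text{frames}\}\leq\#\{\text{roll-up classes}\}\leq 27$ you need the map from roll-up classes to frames to be well defined (your forward direction) and \emph{surjective}, i.e.\ every frame $B$ of $F(\Om)$ arises as the vertex set of some biased graph representation. That surjectivity is the standard construction recalled at the start of Section \ref{sec:rerouting} (put $V(G)=B$ and read off edges from minimal spanning pairs), and you should cite it explicitly rather than leave it implicit in the phrase ``translates directly.'' By contrast, the converse you single out as the main obstacle --- that two representations sharing a frame must be roll-up equivalent --- is the injectivity of that map, which is not needed for an upper bound. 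It is also the genuinely delicate claim: a single link--loop discrepancy at one edge is not by itself a roll-up (the operation rolls up an entire unbalancing class at a balancing vertex), so justifying it would require showing that the set of discrepant edges is a union of unbalancing classes. Drop that direction, add the surjectivity citation, and your argument matches the paper's.
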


\begin{maincor} \label{maincor:fourconnbalvertexrepuniqueframestatement} 
Let $\Om$ be an almost balanced biased graph with $F(\Om)$ nongraphic and 4-connected.  
The vertex set of $\Om$ provides the unique frame for $F(\Om)$.  
\end{maincor}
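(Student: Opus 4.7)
The plan is to derive this statement from Corollary \ref{maincor:fourconnbalvertexrepunique}; the two are equivalent, as explained in the paragraph preceding their statements. For Corollary \ref{maincor:fourconnbalvertexrepunique} I apply Theorem \ref{mainthm:main_bal_vertex_rep} and use 4-connectivity to kill the second alternative. First I verify the hypotheses of Theorem \ref{mainthm:main_bal_vertex_rep}. Since $F(\Om)$ is 4-connected and nongraphic, it has no loop, and in particular $\Om$ has no balanced loop. A $1$- or $2$-separation of the underlying graph of $\Om$ would translate into a separation of small order in $F(\Om)$, contradicting 4-connectivity; hence $\Om$ is 3-connected. Theorem \ref{mainthm:main_bal_vertex_rep} therefore applies.

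Let $\Om'$ be any biased graph with $F(\Om')=F(\Om)$. By Theorem \ref{mainthm:main_bal_vertex_rep}, either $\Om'$ is a roll-up of $\Om$---which is exactly the conclusion, since (as observed in the discussion preceding the corollary) a roll-up only re-attaches certain edges incident to the balancing vertex as unbalanced loops and so preserves the frame $V(\Om)$---or there exist a subgraph $H\subseteq\Om$ and biased graphs $\Psi,\Psi'$ on at most six vertices with $F(\Psi)=F(\Psi')$ such that $\Psi$ is an $H$-reduction of $\Om$ and $\Om'$ is an $H$-enlargement of $\Psi'$. I claim this second case cannot occur non-trivially. An $H$-reduction compresses the part of $\Om$ outside $H$ into a biased graph on only six vertices while preserving the frame matroid; this forces the attachment between $H$ and its complement to behave as a matroidal separator of order at most a small constant. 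Such a separator contradicts the 4-connectivity of $F(\Om)$, so in fact the reduction must be trivial and $\Om'$ is already a roll-up of $\Om$.

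The main obstacle is to make the last step rigorous: translating the six-vertex bound in the definition of an $H$-reduction into an explicit bound (of at most $3$) on the order of a matroid separation of $F(\Om)$. This requires unpacking the formal definitions of $H$-reduction and $H$-enlargement from Section \ref{sec:operations} and tracking how the vertices of attachment on $H$ contribute to a separator of $F(\Om)$. Once the bound on the order of the separation is established, 4-connectivity immediately rules out the $H$-reduction alternative, completing the deduction from Theorem \ref{mainthm:main_bal_vertex_rep}.
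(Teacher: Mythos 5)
Your route is the same as the paper's: apply Theorem \ref{mainthm:main_bal_vertex_rep} and observe that the $H$-reduction alternative forces a low-order separation of $F(\Om)$, which $4$-connectivity forbids; the frame-statement version then follows from the equivalence set up in the discussion of roll-ups and frames, exactly as the paper intends. The step you flag as ``the main obstacle'' is in fact immediate and needs no unpacking of the six-vertex bound (which is a red herring --- and note the reduction compresses the lobes $H_i$ themselves into triangles, not their complement): by definition an $H$-reduction requires at least one lobe $H_i$ with $V(H_i)\cap V(Y_i)=S_i$ where either $|S_i|=3$ and $H_i$ is balanced, or $|S_i|=2$ and $H_i$ is a pinch, so equation (\ref{eqn:matroid_graph_conn}) gives $\lambda_{F(\Om)}(E(H_i),Y_i)\le |S_i|-b(E(H_i))+1\le 3$, and since $V(H_i)\setminus S_i\neq\emptyset$ both sides are large enough for this to be a genuine $\le 3$-separation, contradicting $4$-connectivity. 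This is precisely the paper's one-sentence proof of Corollary \ref{maincor:fourconnbalvertexrepunique}. One further small point: you do verify that $\Om$ is $3$-connected before invoking the theorem (the paper glosses over this); your argument via graph separations translating to matroid separations of order at most one larger is the right one, again via equation (\ref{eqn:matroid_graph_conn}).
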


We will show that Corollaries \ref{maincor:notsomanybiasedgraphreps} and \ref{maincor:fourconnbalvertexrepunique} follow from Theorem \ref{mainthm:main_bal_vertex_rep} in Section \ref{sec:proofsofcorollaries}, after explaining the required preliminary concepts.  
For basic concepts in matroid theory, we refer to Oxley's standard text \cite{oxley:mt2011}.

\subsection*{Remark} 
Frame matroids having an almost balanced biased graph representation are a rather special class of frame matroids.  
It is one of six classes we have identified as being one vertex away from being graphic, in the following sense.  

Our main tool in the proof of Theorem \ref{mainthm:main_bal_vertex_rep} is the notion of a \emph{committed} vertex (Definition \ref{defn:Committedvertex}).  
This is a vertex whose set of incident edges forms a cocircuit whose complementary hyperplane is connected and nongraphic.  
These edges must, in any biased graph representation of the matroid, all remain incident to a common vertex (Proposition  \ref{prop:comp_cocircuit_connnonbinhyperplane}).  
This enables us to show that the biased graphs under consideration have large subgraphs that must appear essentially unchanged in any biased graph representation of the matroid (Lemmas \ref{lem:possible_committed_lobe_reps} and \ref{lem:possible_committed_lobe_reps_pinched}).  
Alternate biased graphs representing the same frame matroid are possible when a biased graph has vertices that are not committed.  
When connectivity is high enough, the deletion of an uncommitted vertex leaves a connected graphic hyperplane.  
In \cite{RongPaper} we have characterised the biased graphs representing a graphic matroid with a list of six families of biased graphs.  

This suggests the following strategy for determining the biased graphs representing a frame matroid $M$.  
Let $\Om$ be a biased graph representation for $M$.  
If all vertices of $\Om$ are committed, then $\Om$ uniquely represents $M$. 
Otherwise, there is a vertex whose deletion leaves a biased graph in one of our six graphic families.  
We know all representations of these, so all that is required is to determine which vertices are committed and, for those edges incident to an uncommitted vertex, which new incidences are permitted.  
The family of balanced biased graphs (that is, biased graphs in which all cycles are balanced) is the simplest of our six families of biased graphs with graphic frame matroids exhibited in \cite{RongPaper}.  
The current paper tackles the case that upon deletion of an uncommitted vertex, the matroid on the remaining elements is graphic because the resulting biased graph is balanced.  

We can hope that this is the first step in characterising representations of all frame matroids, or at least those whose connectivity is not too low.  
Since the remaining five families of biased graphs with graphic frame matroids to be considered each have large balanced subgraphs containing many committed vertices, the approach we develop here seems promising.

\section{The structure of biases in biased graphs with a balancing vertex} \label{sec:rerouting}

To see the connection between abstract frame matroids and biased graphs, let $M$ be a frame matroid on ground set $E$, with frame $B$.  
By adding elements in parallel if necessary, we may assume $B \cap E = \emptyset$.  
Hence there is a matroid $N$ with $M = N \setminus B$ where $B$ is a basis for $N$ and every element $e \in E$ is spanned by a pair of elements in $B$.  
Let $G$ be a graph with vertex set $B$ and edge set $E$, in which $e$ is a loop with endpoint $v$ if $e$ is in parallel with $v \in B$, $e$ is a loop incident to any vertex if $e$ is a loop of $M$, and otherwise $e$ is an edge with endpoints $u,v \in B$ if $e \in \cl\{u,v\}$.  
Setting $\Bb = \{ C : C$ is a cycle for which $E(C)$ is a circuit of $M \}$ (where a loop is a cycle of length 1) yields a biased graph $\GB$.  
The circuits of $M$ are precisely those sets of edges of $G$ inducing one of: 
(1)  a balanced cycle, 
(2)  two edge-disjoint unbalanced cycles intersecting in just one vertex, 
(3)  two vertex-disjoint unbalanced cycles along with a path connecting them, or 
(4)  a theta subgraph in which all three cycles are unbalanced \cite{MR1273951}.  
We call a subgraph as in (2) or (3) a \emph{pair of handcuffs}, \emph{tight} or \emph{loose}, respectively.  
Conversely, given a graph $G$ together with a collection of cycles $\Bb$ obeying the \emph{theta property} --- \ie, no theta subgraph contains exactly two cycles in $\Bb$ --- there is a frame matroid $F\GB$ arising from $\GB$ defined by taking as circuits precisely those edge sets of $G$ that form balanced cycles, pairs of handcuffs, and theta subgraphs having all three cycles unbalanced.  
From this it is easy to see that the rank function $r$ of a frame matroid represented by the biased graph $\GB$ is $r(X) = |V(X)| - b(X)$, where $V(X)$ denotes the set of vertices incident with an edge in $X$, and $b(X)$ is the number of balanced components of the biased subgraph induced by $X \subseteq E(G)$.  

Since in general an abstract frame matroid $M$ may have more than one frame $B$, and the construction above of a biased graph representing $M$ depends upon the choice of $B$, we see that there may be many different biased graphs representing $M$.

The membership or non-membership of a cycle in $\Bb$ is its \emph{bias}; cycles not in $\Bb$ are \emph{unbalanced}.  
A biased graph with all cycles balanced is said to be \emph{balanced}; otherwise it is \emph{unbalanced}.  
A biased graph with no balanced cycle is \emph{contrabalanced}.  
Observe that if $\GB$ is a balanced biased graph, then $F(G, \mathcal{B})$ is the cycle matroid $M(G)$ of $G$.  
We therefore view a graph as a biased graph with all cycles balanced.  
When no cycles are balanced $F(G, \emptyset)$ is the bicircular matroid of $G$ investigated by Matthews \cite{MR0505702}, Wagner \cite{MR815399}, and others (for instance, \cite{MR3100270, MR1892972}).  
The Dowling geometries \cite{MR0307951} arise precisely from those biased graphs for which the bias of cycles may be defined by associating an element of a finite group, and a direction, to each edge (see \cite{MR2017726}).

To begin our study of frame matroids of biased graphs with a balancing vertex, we describe the structure of the unbalanced cycles in such graphs.  
Let $G$ be a graph, let $P$ be a path in $G$, and let $Q$ be a path internally disjoint from $P$ linking two vertices $x, y \in V(P)$.  
We say the path $P'$ obtained from $P$ by replacing the subpath of $P$ linking $x$ and $y$ with $Q$ is obtained by \emph{rerouting} $P$ along $Q$.  

\begin{lem} \label{state:rerouting_paths}
Given two $u$-$v$ paths $P, P'$ in a graph, $P$ may be transformed into $P'$ by a sequence of reroutings.  
\end{lem}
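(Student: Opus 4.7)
The plan is to induct on the nonnegative integer $|E(P')| - \ell$, where $\ell$ denotes the length of the longest common initial subpath of $P$ and $P'$ starting at $u$. In the base case this quantity is $0$, so the common initial subpath already reaches $v$; since both $P$ and $P'$ are simple paths ending at $v$, this forces $P = P'$ and no reroutings are needed.

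For the inductive step, let $x$ be the terminal vertex of the common initial subpath. Then $x \neq v$, and the edges of $P$ and $P'$ incident to $x$ along their respective continuations are distinct. Traverse $P'$ from $x$ toward $v$ and let $y$ be the first vertex encountered that lies in $V(P)$; such a $y$ exists because $v \in V(P) \cap V(P')$. Let $Q$ be the subpath of $P'$ from $x$ to $y$. By the choice of $y$, every internal vertex of $Q$ avoids $V(P)$, so $Q$ is internally disjoint from $P$, and consequently rerouting $P$ along $Q$ produces a well-defined path $P_1$.

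A short verification is then needed to check that $P_1$ is a $u$-$v$ path rather than merely a walk, and that its common initial subpath with $P'$ is strictly longer than $\ell$. For the first point, the internal vertices of $Q$ are new relative to $V(P)$, and no repetition can occur; one also observes that $y$ must lie on $P$ strictly after $x$, since otherwise $y$ would appear twice on $P'$, contradicting its simplicity. For the second point, $P_1$ coincides with $P'$ from $u$ up through $x$ and then along $Q$ to $y$, so the initial agreement extends by at least one edge. Applying the inductive hypothesis to $P_1$ and $P'$ and prepending the rerouting $P \mapsto P_1$ finishes the argument.

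The only conceptual obstacle is to guarantee that each step of the procedure actually qualifies as a rerouting in the sense defined in the excerpt; that is, that the replacement arc $Q$ is internally disjoint from the current path. Choosing $y$ as the \emph{first} return of $P'$ to $V(P)$ after $x$ is precisely what enforces this, so the obstacle dissolves into the definition.
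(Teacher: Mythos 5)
Your argument is correct and is essentially the paper's own proof: repeatedly reroute $P$ along the segment $Q$ of $P'$ from the end $x$ of the common initial subpath to the first subsequent vertex $y$ of $P'$ lying on $P$, which strictly lengthens the common initial segment. You merely make explicit (as a formal induction) the termination argument and the checks that $Q$ is internally disjoint from $P$ and that the result is again a simple $u$-$v$ path, which the paper leaves implicit.
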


\begin{proof}
Suppose $P$ and $P'$ agree on an initial segment from $u$, and let $x$ be the final vertex on this common initial subpath.  
If $x=v$, then $P=P'$, so assume $x \not= v$.  
Let $y$ be the vertex of $P'$ following $x$ that is also in $P$.  
Denote the subpath of $P'$ from $x$ to $y$ by $Q$.  
Since $y$ is different from $x$, the path obtained by rerouting $P$ along $Q$ has a strictly longer common initial segment with $P'$ than $P$.  
Continuing in this manner, eventually we find $x = v$; \ie, $P$ has been transformed into $P'$.  
\end{proof}

If subpath $R$ of path $P$ is rerouted along $Q$, and the cycle $R \cup Q$ is balanced, we refer to this as rerouting \emph{along a balanced cycle} or a \emph{balanced rerouting}.  
If $P$ is a path with distinct endpoints $x,y$ contained in a cycle $C$ and $Q$ is an $x$-$y$ path internally disjoint from $C$, and the cycle $P \cup Q$ is balanced, then the balanced rerouting of $P$ along $Q$ yields a new cycle $C'$.  
The following simple facts will be used extensively.  

\begin{lem} \label{lem:rerouting_along_a_bal_cycle}
Let $C$ be a cycle.  
If $C'$ is obtained from $C$ by rerouting along a balanced cycle, then $C$ and $C'$ have the same bias.  
\end{lem}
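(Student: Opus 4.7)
The plan is to recognize $C$ and $C'$ as two of the three cycles in a single theta subgraph whose third cycle is the balanced rerouting cycle, and then invoke the theta property that defines a biased graph.

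Set up the theta as follows. By hypothesis, $P$ is a subpath of $C$ with distinct endpoints $x,y$, and $Q$ is an $x$-$y$ path internally disjoint from $C$ with $P \cup Q$ balanced; and $C'$ is obtained by replacing $P$ in $C$ by $Q$. Let $P^*$ denote the other $x$-$y$ arc of $C$, so that $C = P \cup P^*$ and $C' = P^* \cup Q$. I claim $H := P \cup P^* \cup Q$ is a theta subgraph: its three constituent paths $P$, $P^*$, $Q$ all join $x$ to $y$, and are pairwise internally disjoint, because $P$ and $P^*$ are the two arcs of $C$ (meeting only at $\{x,y\}$) and $Q$ is internally disjoint from $C = P \cup P^*$. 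The three cycles of $H$ are therefore exactly $C$, $C'$, and $D := P \cup Q$.

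Now apply the defining theta property: no theta subgraph contains exactly two balanced cycles. Since $D$ is balanced by hypothesis, the number of balanced cycles among $\{C, C', D\}$ is either $1$ or $3$. In the first case both $C$ and $C'$ are unbalanced; in the second, both are balanced. Either way, $C$ and $C'$ have the same bias, as required.

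There is essentially no substantive obstacle in this argument; the only step that needs a moment's care is verifying that $H$ is a genuine theta subgraph — that $P$, $P^*$, $Q$ are three distinct pairwise internally disjoint $x$-$y$ paths — which is immediate from $x \neq y$, from $P$ and $P^*$ being the two arcs of $C$, and from $Q$ being internally disjoint from $C$ with $P \cup Q$ a cycle (hence $Q \neq P$ and $Q \neq P^*$).
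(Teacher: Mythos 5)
Your argument is exactly the paper's proof, simply written out in full: the paper also observes that $C\cup Q$ is a theta subgraph and cites the theta property, and your case analysis (the balanced cycle $D=P\cup Q$ forces the count of balanced cycles among $\{C,C',D\}$ to be $1$ or $3$) is the intended one-line justification made explicit. The proposal is correct and takes essentially the same approach.
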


\begin{proof} 
Since $C \cup Q$ is a theta subgraph, this follows immediately from the theta property.  
\end{proof}

When the distinction is important, an edge that is not a loop is called a \emph{link}.  
The set of links incident to vertex $v$ is denoted $\delta(v)$.  

\begin{lem} \label{lem:ei_ej_with_balancing_vertex_have_same_parity_cycles}
Let $\GB$ be a biased graph and suppose $u$ is a balancing vertex in $\GB$.  
Let $\delta(u) = \{e_1, \ldots, e_k\}$.  
For each pair of edges $e_i, e_j$ $(1 \leq i < j \leq k)$, either all cycles containing $e_i$ and $e_j$ are balanced or all cycles containing $e_i$ and $e_j$ are unbalanced.  
\end{lem}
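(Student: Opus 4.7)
The plan is to leverage the two preceding rerouting lemmas (\ref{state:rerouting_paths} and \ref{lem:rerouting_along_a_bal_cycle}) together with the defining property of a balancing vertex, namely that every cycle avoiding $u$ is balanced.

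Fix $e_i,e_j\in\delta(u)$, and say $e_i$ has endpoints $u,v_i$ and $e_j$ has endpoints $u,v_j$. Any cycle $C$ containing both $e_i$ and $e_j$ visits $u$ exactly once with $e_i$ and $e_j$ as the two edges at $u$, so $C$ has the form $e_i\cup P\cup e_j$ where $P$ is a $v_i$-$v_j$ path (possibly trivial, when $v_i=v_j$) in $G-u$. If $v_i=v_j$ the only such cycle is the digon $\{e_i,e_j\}$ and the claim is vacuous, so assume $v_i\neq v_j$.

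Now suppose $C_1=e_i\cup P_1\cup e_j$ and $C_2=e_i\cup P_2\cup e_j$ are two cycles containing both $e_i$ and $e_j$. Both $P_1$ and $P_2$ are $v_i$-$v_j$ paths in the graph $G-u$, so by Lemma~\ref{state:rerouting_paths} (applied inside $G-u$) there is a sequence $P_1=Q_0,Q_1,\ldots,Q_m=P_2$ of $v_i$-$v_j$ paths in $G-u$, each obtained from its predecessor by a single rerouting along a path $R$ in $G-u$ internally disjoint from the previous path.

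For each step, let $R$ be the rerouting path, and let $S$ be the replaced subpath of $Q_{k-1}$, so $R\cup S$ is a cycle. Since $R$ and $S$ both lie in $G-u$, the cycle $R\cup S$ is a cycle of $G-u$ and is therefore balanced, because $u$ is a balancing vertex. Moreover $R$ is internally disjoint from the whole cycle $e_i\cup Q_{k-1}\cup e_j$: its endpoints are on $Q_{k-1}$, its interior vertices avoid $Q_{k-1}$ by the rerouting condition, and they avoid $u$ because $R\subseteq G-u$. So the cycle $e_i\cup Q_k\cup e_j$ is obtained from $e_i\cup Q_{k-1}\cup e_j$ by rerouting along the balanced cycle $R\cup S$, and Lemma~\ref{lem:rerouting_along_a_bal_cycle} gives that the two cycles have the same bias. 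Iterating through the $m$ steps shows $C_1$ and $C_2$ have the same bias, completing the proof.

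The only subtle point (and really the whole content of the argument) is the verification that each path rerouting of $P_k$ inside $G-u$ lifts to a legitimate cycle rerouting of $e_i\cup P_k\cup e_j$ along a balanced cycle; once that is pinned down, the conclusion is automatic from the two earlier lemmas.
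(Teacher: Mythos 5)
Your proof is correct and follows essentially the same route as the paper's: transform one $v_i$-$v_j$ path into the other by reroutings inside $G-u$ (Lemma \ref{state:rerouting_paths}), observe that each rerouting cycle lies in $G-u$ and is therefore balanced, and apply Lemma \ref{lem:rerouting_along_a_bal_cycle} at each step. The paper's version is terser; your extra checks (the digon case and the internal-disjointness of the rerouting path from the full cycle through $u$) are valid and fill in details the paper leaves implicit.
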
 

\begin{proof} 
Fix $i, j$, and consider two cycles $C$ and $C'$ containing $e_i$ and $e_j$.  
Let $e_i = ux_i$ and $e_j = ux_j$.  
Write $C = u e_i x_i P x_j e_j u$ and $C' = u e_i x_i P' x_j e_j u$.  
Path $P$ may be transformed into $P'$ by a series of reroutings, $P \eq P_0, P_1, \ldots, P_l \eq P'$ in $G-u$.  
Since $u$ is balancing, each rerouting is along a balanced cycle.  
Hence by Lemma \ref{lem:rerouting_along_a_bal_cycle}, at each step $m \in \{1,\ldots, l\}$, the cycles $u e_i x_i P_{m-1} x_j e_j u$ and $u e_i x_i P_m x_j e_j u$ have the same bias.  
\end{proof} 

\begin{lem} \label{obs:exists_equiv_rel_on_delta_u}
Let $\GB$ be a biased graph and suppose $u$ is a balancing vertex in $\GB$.  
There exists an equivalence relation $\sim$ on $\delta(u)$ so that a cycle $C$ of $G$ containing $u$ is balanced if and only if it contains two edges from the same equivalence class.  
\end{lem}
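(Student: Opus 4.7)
The plan is to define the relation on $\delta(u)$ directly from Lemma \ref{lem:ei_ej_with_balancing_vertex_have_same_parity_cycles} and then check it is an equivalence. Declare $e \sim f$ iff $e = f$ or some (equivalently, by that lemma, every) cycle of $G$ through both $e$ and $f$ is balanced. Any cycle $C$ of $G$ containing $u$ uses exactly two distinct edges of $\delta(u)$, and by the very definition these two edges lie in the same class iff $C$ is balanced; so the stated ``iff'' is automatic and both reflexivity and symmetry of $\sim$ are immediate. All of the work is in transitivity.

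Assume $e_1 \sim e_2$ and $e_2 \sim e_3$ with $e_i = u x_i$ pairwise distinct (the cases in which some $e_i$ coincide are trivial). The existence of a cycle through $e_1, e_2$ places $x_1$ and $x_2$ in the same component of $G - u$, and likewise for $x_2, x_3$, so $x_1, x_2, x_3$ all lie in a common component $H$ of $G - u$. The strategy is to exhibit a theta subgraph whose three cycles are a balanced cycle $C_1$ through $e_1, e_2$, a balanced cycle $C_2$ through $e_2, e_3$, and a third cycle $C_3^{*}$ containing $e_1$ and $e_3$; the theta property will then force $C_3^{*}$ to be balanced, giving $e_1 \sim e_3$.

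To produce this theta I fix a spanning tree $T$ of $H$ and let $P_{12}, P_{23}$ be the unique $T$-paths from $x_1$ to $x_2$ and from $x_2$ to $x_3$. Set $C_1 := u e_1 x_1 P_{12} x_2 e_2 u$ and $C_2 := u e_2 x_2 P_{23} x_3 e_3 u$; both are balanced by the hypothesis together with Lemma \ref{lem:ei_ej_with_balancing_vertex_have_same_parity_cycles}. The critical observation is that $P_{12} \cap P_{23}$, being a connected subgraph of the tree $T$ containing $x_2$, is a subpath of $T$ with $x_2$ as one endpoint; consequently $C_1 \cap C_2$ is a single path from $u$ (through $e_2, x_2$ and then along the common $T$-subpath to some vertex $v$), and $C_1 \cup C_2$ is a theta subgraph with degree-three vertices $u$ and $v$. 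Its three cycles are $C_1$, $C_2$, and $C_3^{*}$ with edge set $E(C_1) \sd E(C_2)$; since $e_1 \in E(C_1) \setminus E(C_2)$ and $e_3 \in E(C_2) \setminus E(C_1)$, both lie in $C_3^{*}$. The theta property applied to $C_1, C_2, C_3^{*}$ forces $C_3^{*} \in \Bb$, so $e_1 \sim e_3$.

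The main obstacle, and the reason for passing to tree paths, is that arbitrary choices of $x_1 x_2$ and $x_2 x_3$ paths in $H$ can share segments in disconnected or otherwise awkward ways; in such cases $C_1 \cup C_2$ fails to be a theta subgraph and no single application of the theta property suffices. Routing both paths through a common spanning tree forces their intersection to be connected (any two vertices of the intersection are joined by a unique $T$-path, which lies in both $P_{12}$ and $P_{23}$), which is precisely what is needed to realize $C_1 \cup C_2$ as a theta and close the transitivity argument in one step.
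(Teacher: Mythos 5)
Your proof is correct and follows essentially the same route as the paper's: define $e\sim f$ via the existence of a balanced cycle through both (with Lemma \ref{lem:ei_ej_with_balancing_vertex_have_same_parity_cycles} making the choice of cycle irrelevant), and establish transitivity by assembling a theta subgraph whose three cycles pass through $\{e_1,e_2\}$, $\{e_2,e_3\}$, and $\{e_1,e_3\}$ respectively, then invoking the theta property. The only difference is how the theta is built --- you route both paths through a spanning tree to force a connected intersection, whereas the paper takes an $x_1$-$x_3$ path $P$ avoiding $u$ and a $P$-$x_2$ path $Q$ --- a cosmetic variation.
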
 

\begin{proof}
Define a relation $\sim$ on $\delta(u)$ by $e_i \sim e_j$ if there is a balanced cycle containing $e_i$ and $e_j$, or if $i = j$.  
Clearly $\sim$ is reflexive and symmetric; it is also transitive: 
Suppose $e_i \sim e_j$ and $e_j \sim e_t$; say $e_i = ux_i$, $e_j = ux_j$, and $e_t = u x_t$.  
Since there is a balanced cycle containing $x_i u x_j$ and a balanced cycle containing $x_j u x_t$, there is an $x_i$-$x_j$ path avoiding $u$ and an $x_j$-$x_t$ path avoiding $u$.  
Hence there is an $x_i$-$x_t$ path $P$ avoiding $u$ and a $P$-$x_j$ path $Q$ avoiding $u$.  
Let $P \cap Q = \{y\}$.  
Together, $u$, $e_i$, $e_j$, $e_t$, $P$, and $Q$ form a theta subgraph of $G$.  
By Lemma  \ref{lem:ei_ej_with_balancing_vertex_have_same_parity_cycles}, $u e_i x_i P y Q x_j e_j u$ and $u e_j x_j Q y P x_t e_t u$ are both balanced.  
By the theta property therefore, $u e_i x_i P x_t e_t u$ is balanced.  
Hence $e_i \sim e_t$.  
\end{proof}

We call the $\sim$ classes of $\delta(u)$ its \emph{unbalancing classes}.

\subsection{Signed graphs.} 
A convenient and well-studied way to assign biases to the cycles of a graph is by assigning a \emph{sign}, $+$ or $-$, to each of its edges.  
A cycle is then declared to be balanced \iiff it contains an even number of edges signed $-$.  
It is convenient to think of a signed graph as consisting of a graph $G$ together with a distinguished subset of edges $\Sigma \subseteq E(G)$ consisting of those edges signed $-$.  
We call $\Sigma$ the \emph{signature} of the graph.  
Thus a cycle $C$ is balanced \iiff $|E(C) \cap \Sigma|$ is even. 
Given a signature $\Sigma$ for a graph $G$, we write $\Bb_\Sigma$ for the collection of balanced cycles of $G$ given by $\Sigma$.  
We say that an arbitrary biased graph $(G,\Bb)$ \emph{is} a signed graph if there exists a set $\Sigma \subseteq E(G)$ so that $\Bb_{\Sigma} = \Bb$.  The following is a well-known characterisation of when this occurs.  

\begin{prop}[\cite{MR635702}] \label{prop:If_no_odd_theta} 
A biased graph is a signed graph \iiff it contains no contrabalanced theta subgraph.  
\end{prop}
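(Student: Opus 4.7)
My plan is to prove both directions by exploiting the parity structure that a signature imposes on cycles.

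First I would handle necessity by a direct parity count. If $(G,\Bb) = (G,\Bb_\Sigma)$ for some $\Sigma \subseteq E(G)$, and $\Theta$ is a theta subgraph with cycles $C_1, C_2, C_3$, then every edge of $\Theta$ lies in exactly two of the $C_i$. Hence
\[
\sum_{i=1}^{3} |E(C_i) \cap \Sigma| \;=\; 2 \,|E(\Theta) \cap \Sigma| \;\equiv\; 0 \pmod 2,
\]
so the number of cycles $C_i$ with $|E(C_i)\cap \Sigma|$ odd (i.e.\ unbalanced) is even, hence $0$ or $2$. In particular the three cycles of $\Theta$ cannot all be unbalanced.

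For sufficiency, suppose $(G,\Bb)$ contains no contrabalanced theta. After treating connected components separately, I may assume $G$ is connected. Fix a spanning tree $T$, and for each edge $e \notin T$ let $C_e$ denote its fundamental cycle with respect to $T$ (the loop itself when $e$ is an unbalanced loop). Define the candidate signature
\[
\Sigma \;=\; \{\, e \in E(G) \setminus T : C_e \text{ is unbalanced in } (G,\Bb)\,\}.
\]
My goal is to show $\Bb_\Sigma = \Bb$. Write $\sigma(C) = |E(C) \cap \Sigma| \bmod 2$ and $u(C) \in \{0,1\}$ for the indicator of $C$ being unbalanced; I want to show $\sigma(C) = u(C)$ for every cycle $C$ of $G$.

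I would proceed by induction on the number $k$ of chords (edges outside $T$) of $C$. The base case $k = 1$ means $C = C_e$ is a fundamental cycle, and then $\sigma(C) = \sigma(e) = u(C_e) = u(C)$ by the definition of $\Sigma$. (Unbalanced loops are handled as the length-one instance.) For the inductive step $k \ge 2$, the plan is to exhibit a theta subgraph $\Theta \subseteq G$ whose three cycles are $C$, $C_1$, $C_2$, with $C_1$ and $C_2$ each having strictly fewer than $k$ chords and $E(C) = E(C_1) \sd E(C_2)$. Granting such a $\Theta$, the no-contrabalanced-theta hypothesis together with the theta property forces the number of unbalanced cycles among $\{C, C_1, C_2\}$ to be even, so $u(C) \equiv u(C_1) + u(C_2) \pmod 2$. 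By induction $u(C_i) = \sigma(C_i)$, and $\sigma$ is $\FF_2$-linear on edge sets, so $u(C) \equiv \sigma(C_1) + \sigma(C_2) = \sigma(C_1 \sd C_2) = \sigma(C) \pmod 2$, as desired.

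The main obstacle is producing the theta in the inductive step. The natural attempt is: pick a chord $e$ of $C$, let $P$ be the tree path between the endpoints of $e$, and let $Q$ be the complementary $e$-avoiding subpath of $C$. If $P$ and $Q$ are internally disjoint, then $C' = P \cup Q$ is a cycle, $\{C, C_e, C'\}$ are the three cycles of a theta, and $C'$ inherits exactly the chords of $C$ other than $e$, so $C_e$ and $C'$ have $1$ and $k-1$ chords respectively; the induction goes through. The difficulty is the case where $P$ and $Q$ meet at an internal vertex: then $P \cup Q$ is not a cycle. I would handle this by a more careful choice of $e$: among all chords of $C$, select one whose fundamental tree path $P$ has the fewest internal vertices in common with $C$ (breaking ties by length), and then either verify internal disjointness holds or use a first shared internal vertex $x$ to split off a strictly smaller cycle through $e$ and apply induction to that cycle in place of $C_e$. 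Making this choice work in every configuration — in particular showing that one can always reduce to the internally disjoint case, possibly after replacing $C_e$ by some other cycle through $e$ with fewer chords than $C$ — is the technical heart of the argument and is where I would spend the most care.
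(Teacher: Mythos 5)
The paper does not prove this proposition; it is quoted from the literature (Zaslavsky), so there is no in-paper argument to compare against. Judged on its own terms, your necessity direction is complete and correct: each edge of a theta lies in exactly two of its three cycles, so the parities $|E(C_i)\cap\Sigma|$ sum to $0$ and an even number of the three cycles are unbalanced. Your overall strategy for sufficiency — define $\Sigma$ on cotree edges via fundamental cycles and propagate along theta relations, using that "no contrabalanced theta" plus the theta property forces $u(C)+u(C_1)+u(C_2)\equiv 0$ for the three cycles of any theta — is also the right one.

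The gap is exactly where you flagged it, and it is genuine: the inductive step as proposed does not work, and the suggested patches are not shown to succeed. The problem is that you insist on building the theta from a chord $e=xy$ of $C$ together with the tree path $T[x,y]$ between the endpoints of that chord. This can fail for \emph{every} chord of $C$. Take $G=K_4$ on $\{1,2,3,4\}$ with $T$ the star at $1$ and $C=2\text{-}1\text{-}3\text{-}4\text{-}2$; its chords are $34$ and $42$, and for each of them the tree path ($3\text{-}1\text{-}4$, resp.\ $4\text{-}1\text{-}2$) meets the complementary arc of $C$ at the internal vertex $1$, so no theta arises; choosing "the chord whose tree path meets $C$ least" does not help since both fail equally. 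What saves the induction is a different choice of connecting path: one should not join the two endpoints of a single chord, but rather join two \emph{distinct tree arcs} of $C$ (the maximal subpaths of $C\cap T$ between consecutive cotree edges). Contracting these arcs inside $T$ and taking a path of the resulting tree between two distinct contracted arcs that avoids all the others yields a tree path $R$ from $a\in V(C)$ to $b\in V(C)$, internally disjoint from $C$, whose endpoints separate the cotree edges of $C$ into two nonempty groups; then $C\cup R$ is a theta whose other two cycles each have strictly fewer cotree edges than $C$ (in the $K_4$ example this produces the edge $14$, not a path between endpoints of a chord). Without this lemma — or some equivalent — the induction does not close, so as written the sufficiency direction is incomplete.
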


\subsection{$k$-signed graphs.}  
Biased graphs with a balancing vertex have the biases of their cycles conveniently described using the following generalisation of signed graphs.  
Let $k$ be a positive integer.  
A \emph{$k$-signed graph} is a graph $G$ together with a collection of subsets of edges $\mathbf{\Sigma} = \{ \Sigma_1, \ldots, \Sigma_k \}$ with $\Sigma_i \cap \Sigma_j = \emptyset$ for $i \not= j$.  
Declare a cycle to be balanced \iiff $|E(C) \cap \Sigma_i|$ is even for every $1 \le i \le k$.  
We again call the collection $\mathbf{\Sigma}$ a \emph{signature} for $G$, and denote the collection of balanced cycles determined by $\mathbf{\Sigma}$ in this manner by $\Bb_{\mathbf{\Sigma}}$.  
We say that an arbitrary biased graph $(G,\Bb)$ \emph{is} a $k$-signed graph if there exists a collection $\mathbf{\Sigma} = \{\Sigma_1, \ldots, \Sigma_k\}$ so that $\Bb_{\mathbf{\Sigma}} = \Bb$.   

\begin{lem} \label{obs:relabelling_when_there_is_a_balancing_vertex}
Let $(G,\Bb)$ be a biased graph with a balancing vertex $u$ after deleting its set $U$ of unbalanced loops.  
Let $\{ \Sigma_1, \ldots, \Sigma_k \}$ be the partition of $\delta(u)$ into its unbalancing classes in $\GB \setminus U$, and let $\mathbf{\Sigma} = \{ U, \Sigma_1, \ldots, \Sigma_k \}$.  
Then $(G,\Bb)$ is a $k$-signed graph with $\Bb_{\mathbf{\Sigma}} = \Bb$ and $\GB$ is a $(k-1)$-signed graph with $\Bb_{\mathbf{\Sigma} \setminus \Sigma_i} = \Bb$ for every $1 \le i \le k$.  
\end{lem}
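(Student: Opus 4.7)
The plan is a direct case analysis on the type of cycle $C$ in $G$, with Lemma \ref{obs:exists_equiv_rel_on_delta_u} supplying the key ingredient for cycles through $u$. The first step is to fix the signature $\mathbf{\Sigma} = \{U, \Sigma_1, \ldots, \Sigma_k\}$ and, for each cycle $C$ of $G$, to compare its actual bias in $\GB$ with the parity condition imposed by $\mathbf{\Sigma}$.

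I would split the verification of $\Bb_{\mathbf{\Sigma}} = \Bb$ into four cases. If $C$ is a balanced loop, then $C \notin U$ and $C \notin \Sigma_j$ for any $j$ (since each $\Sigma_j \subseteq \delta(u)$ is a set of links), so $|E(C) \cap X| = 0$ for every $X \in \mathbf{\Sigma}$. If $C$ is an unbalanced loop, then by definition $C \in U$, so $|E(C) \cap U| = 1$. If $C$ has length at least two and avoids $u$, then $C$ lives in $\GB \setminus U$ and is disjoint from $\delta(u)$; since $u$ is a balancing vertex of $\GB \setminus U$, $C$ is balanced, matching $|E(C) \cap X| = 0$ for every $X \in \mathbf{\Sigma}$. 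Finally, if $C$ contains $u$ and is not a loop, then $C$ uses exactly two edges $e, f \in \delta(u)$, and Lemma \ref{obs:exists_equiv_rel_on_delta_u} says $C \in \Bb$ iff $e \sim f$ iff $e, f \in \Sigma_m$ for some $m$; this is exactly when $|E(C) \cap \Sigma_m| = 2$ and $|E(C) \cap \Sigma_{m'}| = 0$ for all $m' \neq m$, which is in turn equivalent to $C \in \Bb_{\mathbf{\Sigma}}$.

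For the second statement, I would show that omitting any single $\Sigma_i$ preserves the recorded bias of every cycle. The only cycles whose intersection with $\mathbf{\Sigma}$ involves $\Sigma_i$ are those passing through $u$ and using at least one edge of $\Sigma_i$. If such a cycle uses two edges of $\Sigma_i$ (the balanced case), then all other intersections are already zero, so removing $\Sigma_i$ leaves the cycle recorded as balanced in $\mathbf{\Sigma} \setminus \Sigma_i$. If it uses one edge of $\Sigma_i$ and one of $\Sigma_j$ with $j \neq i$ (the unbalanced case), then $|E(C) \cap \Sigma_j| = 1$ survives, so the cycle is still recorded as unbalanced. Loops and cycles avoiding $u$ are unaffected.

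The argument is essentially bookkeeping, so no deep obstacle appears. The main care required is to keep loops (never in any $\Sigma_j$, but possibly in $U$) separate from links (partitioned by $\sim$) incident to $u$, and to remember to apply the balancing-vertex hypothesis in $\GB \setminus U$ rather than in $\GB$ when handling cycles that avoid $u$.
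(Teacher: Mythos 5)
Your proof is correct and follows the same route the paper intends: the paper's own proof is the one-line remark that the claim "follows easily from the fact that $\sim$ is an equivalence relation in $\GB \setminus U$," and your case analysis (loops, cycles avoiding $u$, cycles through $u$ via Lemma \ref{obs:exists_equiv_rel_on_delta_u}) is exactly the omitted bookkeeping. No gaps.
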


\begin{proof} 
This follows easily from the fact that $\sim$ is an equivalence relation in $\GB \setminus U$.  
\end{proof}

\section{Operations on biased graphs with a balancing vertex that preserve the frame matroid} 
\label{sec:operations} 
  
The operations of \emph{pinching} two vertices in a graph and of \emph{rolling up} an unbalancing class yield another biased graph representing the same frame matroid.  
We now describe these operations. 

\subsection{Pinching and splitting.} 
Let $H$ be a graph.  
Choose two distinct vertices $u, v \in V(H)$, and let $G$ be the graph obtained from $H$ by identifying $u$ and $v$ as a single vertex $w$.  
An edge with endpoints $u$ and $v$ becomes a loop incident to $w$, and so $\delta(w) = \delta(u) \cup \delta(v) \setminus \{e : e = uv\}$.  
Let $\Bb$ be the set of all cycles in $G$ not meeting both $\delta(u)$ and $\delta(v)$.  
It is easily verified (for instance, by checking all circuits of the two matroids) that $F\GB = M(H)$.  
We say the biased graph $\GB$ is obtained by \emph{pinching} $u$ and $v$, and call $\GB$ a \emph{pinch}. 
Biased graph $\GB$ is a signed graph: setting $\Sigma=\delta(u)$ or $\Sigma=\delta(v)$ gives a signature so that $\GB = (G, \Bb_\Sigma)$.  

The signed graph obtained by pinching two vertices of a graph to a single vertex $w$ has $w$ as a balancing vertex.  
Conversely, if $\GB$ is a signed graph with a balancing vertex $u$, then $\GB$ is obtained as a pinch of a graph $H$, which we may describe as follows.  
If $u$ is a cut vertex of $G$, then there are biased subgraphs $(G_1, \Bb_1)$, \ldots, $(G_m, \Bb_m)$ where each $(G_i, \Bb_i)$ has a balancing vertex $u_i$ ($i \in \{1, \ldots, m\}$), such that $u_i$ is not a cut vertex in $G_i$ and $\GB$ is obtained by identifying vertices $u_1$, \ldots, $u_m$ to a single vertex $u$.  
If any $(G_i,\Bb_i)$ has 
more than two unbalancing classes, 
then $\GB$ contains a contrabalanced theta, contradicting Proposition \ref{prop:If_no_odd_theta}.  
Hence for each $i$, 
$(G_i, \Bb_i)$ has at most two unbalancing classes, 
and Lemma \ref{obs:relabelling_when_there_is_a_balancing_vertex} gives a signature $\{ \Sigma_1^i, \Sigma_2^i \}$ (where the sets are permitted to be empty).  
Let $H$ be the graph obtained from $G$ by \emph{splitting} vertex $u$;  that is, 
\begin{itemize} 
\item replace $u$ with two vertices, $u_1$ and $u_2$, 
\item for $i \in \{1, \ldots, m\}$ redefine the incidence of each edge $e = v u \in \Sigma_1^i$ so that $e = v u_1$, 
\item redefine the incidence of each edge $e = v u \in \Sigma_2^i$ so that $e = v u_2$, 
\item set each unbalanced loop incident to $u$ as a link with endpoints $u_1$ and $u_2$, and 
\item place each balanced loop incident to $u$ as a balanced loop incident to either $u_1$ or $u_2$; 
\item leave all remaining edges' incidences unchanged. 
\end{itemize}
It is easily verified that $M(H)$ and $F\GB$ have the same set of circuits: 

\begin{prop} \label{prop:vertex_splitting_operation}
Let $(G,\Bb)$ be a signed graph with a balancing vertex $u$.  
If $H$ is obtained from $(G,\Bb)$ by splitting $u$, then $M(H) = F(G,\Bb)$.  
\end{prop}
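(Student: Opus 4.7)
The strategy is to prove $M(H) = F(G,\Bb)$ by showing the two matroids have identical circuit sets, after first drastically narrowing down what circuits of $F(G,\Bb)$ can look like under the hypotheses. The ground sets of $M(H)$ and $F(G,\Bb)$ are already identified via the splitting construction, so I only need to match circuits.

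First I would trim the list of circuit shapes for $F(G,\Bb)$ using the two standing hypotheses. Since $(G,\Bb)$ is a signed graph, Proposition \ref{prop:If_no_odd_theta} rules out contrabalanced theta circuits. Since $u$ is a balancing vertex, every unbalanced cycle of $G$ meets $u$, so no two unbalanced cycles are vertex-disjoint; hence loose handcuffs are also excluded. The circuits of $F(G,\Bb)$ are therefore exactly the balanced cycles of $G$ and the tight handcuffs at $u$ (pairs of unbalanced cycles, possibly unbalanced loops, meeting only at $u$).

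Next I would perform a case analysis of a cycle $C$ of $H$ according to its intersection with $\{u_1,u_2\}$. If $C$ avoids both, it coincides with a cycle of $G$ missing $u$, which is balanced. If $C$ meets exactly one of $u_1,u_2$, say $u_j$, then the two edges of $C$ at $u_j$ lie in a common $\Sigma_j^i$ (since edges at $u_j$ belonging to distinct blocks cannot both occur on a cycle), and by Lemma \ref{obs:exists_equiv_rel_on_delta_u} the corresponding cycle through $u$ in $G$ is balanced. Finally, if $C$ meets both $u_1$ and $u_2$, then $C$ is the union of two internally disjoint $u_1$-$u_2$ paths; upon reidentifying $u_1=u_2=u$, each such path closes to either an unbalanced loop at $u$ (from a $u_1u_2$-link originating as an unbalanced loop of $G$) or to an unbalanced cycle through $u$ using one edge from $\Sigma_1^i$ and one from $\Sigma_2^i$ for some common block index $i$ (these lying in different unbalancing classes, hence unbalanced by Lemma \ref{obs:exists_equiv_rel_on_delta_u}). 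Together they form a tight handcuff at $u$ in $G$.

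The converse direction is symmetric: a balanced cycle of $G$ either avoids $u$ (and is unchanged in $H$) or uses two edges of a common unbalancing class $\Sigma_j^i$, giving a cycle of $H$ through $u_j$ only; and a tight handcuff at $u$ in $G$ decomposes as two closed components at $u$, each converting to a $u_1$-$u_2$ path or link in $H$, whose union is a cycle of $H$ through both $u_1$ and $u_2$. I expect the main obstacle to be a bookkeeping point in the last case: one must verify that the bipartition $\bigcup_i \Sigma_1^i \,\cup\, \bigcup_i \Sigma_2^i$ of $\delta(u)$ provided by Lemma \ref{obs:relabelling_when_there_is_a_balancing_vertex} interacts correctly with the block structure of $G$, so that $u_1$-$u_2$ paths in $H$ correspond bijectively to unbalanced closed walks at $u$ in $G$ that use one edge of each $\Sigma_j^i$ at $u$. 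Once this bijection is in place the matching of circuits is immediate, and the equality $M(H) = F(G,\Bb)$ follows.
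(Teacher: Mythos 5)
Your proof is correct and follows exactly the route the paper takes: the paper simply asserts that the circuit sets of $M(H)$ and $F(G,\Bb)$ are ``easily verified'' to coincide, and you have carried out that verification via the same circuit-by-circuit comparison, correctly using the signed-graph and balancing-vertex hypotheses to reduce to balanced cycles and tight handcuffs at $u$. One cosmetic point: in your second case the parenthetical reason should read ``cannot both occur on a cycle avoiding $u_{3-j}$,'' since two edges at $u_j$ coming from distinct blocks of $G$ at the cut vertex $u$ can lie on a common cycle of $H$ that passes through the other split vertex; with that qualifier the argument is airtight.
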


It has thus far been convenient to denote a biased graph explicitly by the pair $\GB$ consisting of the underlying graph $G$ and its collection of balanced cycles $\Bb$. 
Sometimes this notation becomes cumbersome and it is more convenient to speak more concisely of a biased graph $\Om$; that is, to refer to the pair $\GB$ using the single symbol $\Om$. 
Moreover, it is often the case that the biases of cycles may be viewed as being given by a frame matroid that the biased graph represents (so the cycles that are balanced are those that are circuits of the matroid). 
In the case of $k$-signed graphs, it is more convenient to describe biases of cycles using a signature. 
In these instances there is no need to explicitly write $\GB$ for the pair of which the biased graph consists; indeed, doing so often makes the specification of the collection $\Bb$ redundant. 
Moreover, a biased graph may be thought of as a graph equipped with the extra information consisting of the biases of its cycles. Thus for a biased graph $\Om$, we may think of $\Om$ as having an underlying graph $G$ which is obtained by forgetting the biases of its cycles. 
We will therefore refer to vertices, edges, balanced cycles, induced subgraphs, induced biased subgraphs, and so on, of $\Om$, with the understanding that these belong to the underlying graph $G$ and the collection $\Bb$ of which $\Om$ consists. 
We reserve capital Greek letters $\Om$, $\Psi$ for biased graphs. 

\subsection{Roll-ups and unrolling.} 

If $\Om$ is a biased graph with a balancing vertex $u$, then the following \emph{roll-up} operation produces another biased graph with frame matroid isomorphic to $F(\Om)$.  
Let $\Sigma = \{e_1, \ldots, e_k\}$ be the set of edges of one of the unbalancing classes in $\delta(u)$.  
Let $\Om'$ be the biased graph obtained from $\Om$ by replacing each edge $e_i = u v_i  \in \Sigma$ with an unbalanced loop incident to its endpoint $v_i$.  
We say the biased graph $\Om'$ is obtained by a \emph{roll-up of unbalancing class $\Sigma$} of $\delta(u)$.  

Likewise, if a biased graph $\GB$ has a vertex $u$ that is balancing after deleting its set $U$ of unbalanced loops, and $\mathbf{\Sigma}$ is a signature for $G$ with $\Bb_{\mathbf{\Sigma}}=\Bb$ such that $\Sigma \setminus U \subseteq \delta(u)$, then the biased graph $(G',\Bb_{\mathbf{\Sigma}})$ obtained by replacing each unbalanced loop incident to $x \not= u$ with a $xu$ link  is obtained by \emph{unrolling} the set of unbalanced loops of $\GB$.  

Let $\Om_0$ be a biased graph with balancing vertex $u$ after deleting its set $U$ of unbalanced loops, and suppose that in $\Om_0 \setminus U$ there are $k$ unbalancing classes $\Sigma_1, \ldots, \Sigma_k$ in $\delta(u)$.  
Let $\Om$ be the biased graph obtained from $\Om_0$ by unrolling $U$, and write $\Sigma_0=U$.  
For each $i \in \{0, 1, \ldots, k\}$, let $\Om_i$ be the biased graph obtained from $\Om$ by rolling up unbalancing class $\Sigma_i$ (Figure \ref{fig:rollup}).  
Consider the set $\{\Om, \Om_0, \Om_1, \ldots, \Om_k\}$.  
We say any member of this set is a \emph{roll-up} of any other.  
\begin{figure}[tbp] 
\begin{center} 
\includegraphics[scale=0.9]{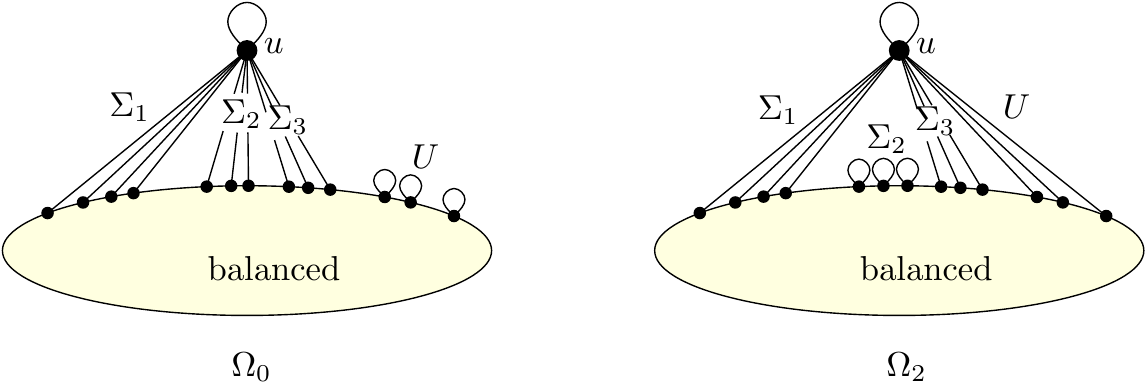}
\end{center} 
\caption{A roll-up; $F(\Om_0) = F(\Om_2)$}
\label{fig:rollup} 
\end{figure} 
It is straightforward to check that the frame matroids of any two roll-ups have the same set of circuits: 

\begin{prop} \label{prop:rollup_operation} 
Let $\Om$ be a biased graph with a balancing vertex after deleting its unbalanced loops.  
If $\Om'$ is a roll-up of $\Om$, then $F(\Om') = F(\Om)$.  
\end{prop}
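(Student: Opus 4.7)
I plan to prove the proposition by checking that $r_\Om(X) = r_{\Om'}(X)$ for every $X \subseteq E$, using the formula $r(X) = |V(X)| - b(X)$ for frame matroids of biased graphs. By transitivity inside the roll-up family, it suffices to compare $\Om$ to a single roll-up $\Om_i$. Without loss of generality, I assume $\Om$ is fully unrolled (no unbalanced loops, $u$ balancing with unbalancing classes $\Sigma_0,\ldots,\Sigma_k$ at $u$), and that $\Om' = \Om_i$ is obtained by replacing each link $uv_j \in \Sigma_i$ with an unbalanced loop at $v_j$.

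Fix $X \subseteq E$ and write $Y = X \cap \Sigma_i$. Three cases arise from examining $X \cap \delta(u)$. If $Y = \emptyset$, the subgraphs on $X$ are literally the same in both biased graphs, and the ranks trivially agree. If $Y \ne \emptyset$ and $X$ also contains an edge of $\delta(u) \setminus \Sigma_i$, then $u \in V(X)$ in both graphs, so $|V(X)|_\Om = |V(X)|_{\Om'}$, and the task reduces to matching balanced component counts. If $Y \ne \emptyset$ but $X \cap \delta(u) \subseteq \Sigma_i$, then $u \in V(X)$ in $\Om$ but $u \notin V(X)$ in $\Om'$, so $|V(X)|_\Om = |V(X)|_{\Om'} + 1$ and I must exhibit exactly one extra balanced component in $\Om$ to compensate.

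Let $D_1, \ldots, D_q$ be the components of the subgraph on $X_0 := X \setminus Y$, and let $v_j$ denote the $\Om$-neighbour of $u$ along the $j$th edge in $Y$. In $\Om$ the $Y$-edges merge $\{u\}$ with every $D_m$ containing some $v_j$, and with every isolated $v_j$, into a single $u$-containing component $K_\Om$; in $\Om'$ the $Y$-edges are loops, leaving the $D_m$'s as separate components (each carrying a $\Sigma_i$-loop at every $v_j$ it contains) together with isolated $v_j$-loop components. Every $\Om'$-component containing a $\Sigma_i$-loop is unbalanced, and every $D_m$ avoiding all $v_j$'s is identical in $\Om$ and $\Om'$ and balanced in both, since $u$ is balancing in $\Om$ and these $D_m$'s miss $u$. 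The case analysis now reduces to: in case 3, show $K_\Om$ is always balanced in $\Om$ while each of the corresponding $\Om'$-pieces is unbalanced, giving the required $+1$; in case 2, show that the $u$-containing component $K_\Om$ of $\Om$ is balanced if and only if the $u$-containing $D_1$ of $\Om'$ is balanced.

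The main structural input is Lemma \ref{obs:exists_equiv_rel_on_delta_u}, from which I would deduce that a connected biased subgraph of $\Om$ containing $u$ is balanced precisely when, in each component of its $u$-deletion, all edges incident to $u$ lie in a single unbalancing class. Applying this: in case 3, every $u$-edge of $K_\Om$ lies in $\Sigma_i$, so $K_\Om$ is balanced. In case 2, the characterisation shows that a $v_j$ inside $D_1$ forces two distinct $u$-classes into the component of $D_1 - u$ containing $v_j$ (one from $X_0$, which must exist since $D_1$ is connected, plus $\Sigma_i$ via $uv_j$), destroying balance of $K_\Om$ in $\Om$; and it simultaneously produces a $\Sigma_i$-loop inside $D_1$, destroying balance of $D_1$ in $\Om'$. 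Absent any such $v_j$ in $D_1$, the two balance conditions reduce to the same statement about the $X_0$-edges at $u$. I expect the main obstacle to be precisely this final bookkeeping step in case 2, matching the class structure of $u$-edges in $K_\Om$ with that of $D_1$ once the $Y$-edges have been redirected to loops.
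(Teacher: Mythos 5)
Your argument is correct, and it takes a different (and more explicit) route than the paper, which offers no proof at all for this proposition: it simply asserts that "it is straightforward to check that the frame matroids of any two roll-ups have the same set of circuits," i.e. it envisages a direct verification that the four circuit types (balanced cycles, tight and loose handcuffs, contrabalanced thetas) are carried onto circuits by the roll-up. You instead verify equality of rank functions via $r(X)=|V(X)|-b(X)$, reducing everything to the bookkeeping of balanced components. The two key points of your argument check out: the characterisation "a connected subgraph containing $u$ is balanced iff each component of its $u$-deletion receives $u$-edges from only one unbalancing class" follows correctly from Lemma \ref{obs:exists_equiv_rel_on_delta_u}, and the decisive observation in your Case 2 — that a $v_j$ lying in the $u$-component $D_1$ of $X\setminus Y$ forces an $X_0$-edge at $u$ outside $\Sigma_i$ into the same component of $D_1-u$ as the $\Sigma_i$-edge $uv_j$, while a $v_j$ outside $D_1$ only ever feeds $\Sigma_i$-edges into its component of $K_\Om-u$ and hence never destroys balance — is exactly what makes the balanced-component counts agree. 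What the circuit route buys is brevity for a reader willing to trust the four circuit types; what your route buys is a self-contained verification that two matroids on the same ground set coincide, with all the case analysis localised at the balancing vertex. Either is acceptable; yours is complete.
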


Hence given a biased graph $\Om_0$ with a balancing vertex after deleting its set of unbalanced loops, the collection of biased graphs $\{\Om, \Om_0, \Om_1, \ldots, \Om_k\}$ is a set of $k+2$ representations of $F(\Om_0)$.  

Observe that if $H$ is a graph, then for each vertex $v \in V(H)$ the biased graph $\GB$ obtained by rolling up all edges in $\delta(v)$ has $F\GB = M(H)$.  
Conversely, if $\GB$ is balanced after deleting its set $U$ of unbalanced loops, then $U$ is a signature for $G$ such that $\Bb = \Bb_U$.  
Hence the graph $H$ obtained from $G$ by adding an isolated vertex $u$ and unrolling the edges in $U$ to $u$ has $M(H) = F\GB$.

\subsection{$H$-reductions and $H$-enlargements} 

The pinching/splitting and roll-up/\allowbreak unrolling operations have been known for some time. 
In this paper we introduce a new operation, that of \emph{$H$-reduction} and its inverse, \emph{$H$-enlargement}. 
We provide here motivation and an intuitive description; 
precise definitions are given in Section \ref{sec:representations}. 

A well-known operation that may be applied to a graph is that of decomposing along a 3-separation: if $G$ is the union of two subgraphs $G_1$ and $G_2$ and $V(G_1) \cap V(G_2) = \{x, y, z\}$, then we obtain two graphs $G_1'$ and $G_2'$ such that $G$ is a 3-sum of $G_1'$ and $G_2'$ as follows. For $i, j \in \{1,2\}$, $i \neq j$, we obtain $G_i'$ from $G$ by replacing subgraph $G_j$ with a triangle on vertices $x, y, z$. 
A triangle in $G$ is a circuit of size 3 in $M(G)$. 
In a biased graph $\Om$, there are, up to relabelling, 
four biased graphs representing a circuit of size 3: a balanced triangle, a pinch of a balanced triangle (a \emph{pinched triangle}), a roll-up of a balanced triangle (a \emph{rolled-up triangle}), and a contrabalanced theta consisting of three edges linking a pair of vertices (Figure \ref{fig:3circuits}). 
\begin{figure}[tbp] 
\begin{center} 
\includegraphics[scale=0.9]{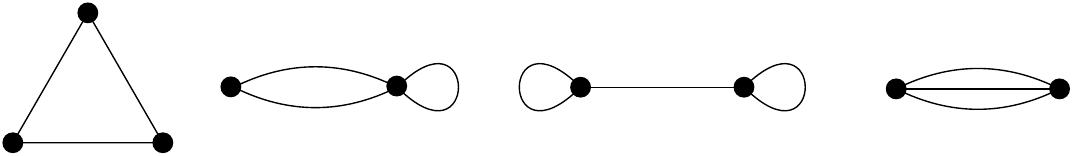}
\end{center} 
\caption{3-circuits in a biased graph}
\label{fig:3circuits} 
\end{figure} 

Whitney showed that a 3-connected graph uniquely represents its cycle matroid; his 2-isomorphism Theorem says that if $G$ has a subgraph $H$ that meets the rest of $G$ in exactly two vertices $x, y$, then redefining incidences so that all edges in $H$ incident to $x$ are instead incident to $y$ and those incident to $y$ are instead incident to $x$, while all other incidences remain unchanged, yields another graph representing $M(G)$. 

An $H$-reduction followed by an $H$-enlargement, together with an intermediate step, may be thought of as similar in spirit to these operations for graphs. 
Let $\Om$ be a biased graph, and let $H$ be a biased subgraph of $\Om$ such that either 
\begin{enumerate} 
\item $H$ is balanced and $V(H)$ meets the rest of $\Om$ in exactly three vertices $\{x,y,z\}$; 
\item $H$ is obtained a pinch of a graph $H'$ where vertices $x',x''$ of $H'$ are identified in the pinching operation to a single vertex $x$, and $H$ meets the rest of the biased graph in precisely 2 vertices $\{x,y\}$; or 
\item $H$ is a roll-up of a graph and meets the rest of the biased graph in precisely two vertices $\{x,y\}$. 
\end{enumerate} 
An $H$-reduction is one of the following operations. 
In case (1) replace $H$ with a balanced triangle on vertices $x, y, z$. 
In case (2) replace $H$ with a pinched triangle on vertices $x, y$, such that the unbalanced loop of the pinched triangle is incident to $x$. 
In case (3) replace $H$ with a rolled-up triangle on vertices $x, y$. 
An $H$-enlargement is the inverse operation of an $H$-reduction operation. 

It is convenient to combine a sequence of such reductions into a single reduction. 
If $H_1, \ldots, H_k$ are pairwise edge disjoint biased subgraphs of $\Om$ each satisfying the conditions for an $H_i$-reduction, then we write $H = \{H_1, \ldots, H_k\}$, perform an $H_i$ reduction for each $i \in \{1, \ldots, k\}$, and call the resulting biased graph an \emph{$H$-reduction}.  
We thus obtain a smaller biased graph $\Psi$. 
Now suppose there is another biased graph $\Psi'$ with $F(\Psi') = F(\Psi)$.  
We show that performing an $H_i$-enlargement on $\Psi'$ for each $i \in \{1, \ldots, k\}$ yields a biased graph $\Om'$ with $F(\Om') = F(\Om)$. 
We say $\Om'$ is an \emph{$H$-enlargement} of $\Psi'$. 

Theorem \ref{mainthm:main_bal_vertex_rep} says that given a 3-connected almost balanced biased graph $\Om$, we obtain every biased graph representation of $F(\Om)$ as either a roll-up of $\Om$ or as an $H$-enlargement in a sequence 
\[ \Om \mapsto \Psi \mapsto \Psi' \mapsto \Om' \] 
where $\Psi$ is an $H$-reduction of $\Om$ and has at most six vertices, $F(\Psi) = F(\Psi')$, and $\Om'$ is an $H$-enlargement of $\Psi'$. 
This is not only nice from a theoretical perspective. 
The fact that we are guaranteed to obtain a small reduced graph is good news from a practical standpoint, since it is not difficult to find all biased graphs $\Psi'$ whose frame matroid is isomorphic to $F(\Psi)$ when $|V(\Psi)| \leq 6$. 

The rest of this paper is devoted to a proof of Theorem \ref{mainthm:main_bal_vertex_rep}, and its two corollaries.

\section{Preliminaries} \label{sec:Preliminaries} 

A pair of parallel edges forming a balanced 2-cycle in $\Om$ is a pair of parallel elements in $F(\Om)$, as are two unbalanced loops incident to the same vertex.  
Since the biased subgraph consisting of the set of edges linking a pair of vertices $\{u,v\}$ has (each of $u$ and $v$ as) a balancing vertex, by Lemma \ref{obs:exists_equiv_rel_on_delta_u} the set of $u$-$v$ edges is partitioned into unbalancing classes, where each unbalancing class consists of a parallel class in $F(\Om)$. 
Let $\si(\Om)$ denote the simplification of $\Om$; that is, the biased graph obtained from $\Om$ by removing all balanced loops, all but one loop from the set of unbalanced loops incident to each vertex, and, for each pair of vertices, all but one edge of each parallel class in $F(\Om)$ of edges between them. 
If $\{\Om_1, \ldots, \Om_n\}$ is the set of all biased graphs representing $F(\si(\Om))$, then it is easy to find all biased graphs representing $F(\Om)$: to each biased graph $\Om_i$ each loop of the matroid may be added as a balanced loop incident to any vertex (or to a new vertex), and for each parallel class of $F(\Om)$, if in $\Om_i$ the representative $e$ of the class is an unbalanced loop incident to $v$ then all elements of the class are added as unbalanced loops incident to $v$; if $e$ is a $u$-$v$ link then all elements of the class are added as $u$-$v$ links forming balanced cycles with $e$. 
For this reason we may now assume that our biased graphs have no balanced loops, no balanced 2-cycles, and at most one unbalanced loop incident to each vertex.  

We use the following conventions in figures illustrating $k$-signed graphs.  
If $|\mathbf{\Sigma}|$ is at most three, then we use bold, dashed, or dotted edges to indicate edges in subsets $\Sigma_1$, $\Sigma_2$, $\Sigma_3$ of the signature, with edges in the same $\Sigma_i$ shown with same indication. 
A label near a vertex indicates that all edges incident to the vertex in the vicinity of the label are in the indicated subset of the signature.  
Most biased graphs we need to consider are $k$-signed.  
Otherwise, we resort to listing the balanced cycles of the graph.  
By assumption, all loops are unbalanced.  

Next we collect a few facts about what separations and hyperplanes look like in a biased graph representation of a frame matroid.

\subsection{Connectivity.}  \label{sec:intro_connectivity} 

We first summarise the standard notions of connectivity of graphs and matroids that we use, then consider connectivity of biased graphs.  
A \emph{separation} of a graph $G=(V,E)$ is a pair of edge disjoint subgraphs $G_1, G_2$ of $G$ with $G = G_1 \cup G_2$.  
The \emph{order} of a separation is $|V(G_1) \cap V(G_2)|$.  
A separation of order $k$ is a \emph{$k$-separation}.  
If both $V(G_1) \setminus V(G_2)$ and $V(G_2) \setminus V(G_1)$ are nonempty, then the separation is \emph{proper}.  
If $G$ has no proper separation of order less than $k$, then $G$ is \emph{$k$-connected}.  
The least integer $k$ for which $G$ has a proper $k$-separation is the \emph{connectivity} of $G$.  
(Note that highly connected graphs may contain loops or parallel edges.)  
A partition $(X,Y)$ of $E$ naturally induces a separation $G[X], G[Y]$ of $G$, which we also denote $(X,Y)$.  
We call $X$ and $Y$ the \emph{sides} of the separation.  
The \emph{connectivity function} of $G$ is the function $\lambda_G$ that to each partition $(X,Y)$ of $E$ assigns its order.  
That is, $\lambda_G(X,Y) = |V(X) \cap V(Y)|$.  

A \emph{separation} of a matroid $M$ on ground set $E$ is a partition of $E$ into two subsets $A$, $B$, and is denoted $(A,B)$; we call $A$ and $B$ the \emph{sides} of the separation.  
The \emph{order} of a separation $(A,B)$ is $r(A) + r(B) - r(E) + 1$.  
A separation of order $k$ with both $|A|, |B| \geq k$ is a \emph{$k$-separation}.  
If $M$ has no $l$-separation with $l<k$, then $M$ is \emph{$k$-connected}.   
The \emph{connectivity} of $M$ is the least integer $k$ such that $M$ has a $k$-separation; a matroid is \emph{connected} \iiff it has no $1$-separation.  
The \emph{connectivity function} of $M$ is the function $\lambda_M$ that assigns to each partition $(A,B)$ of $E$ its order; that is, $\lambda_M(A,B) = r(A) + r(B) - r(M) + 1$.  

If $(X,Y)$ is a partition of the edge set of a connected graph $G$, and each of the induced subgraphs $G[X]$ and $G[Y]$ is connected, then the orders of $(X,Y)$ in $G$ and in $M(G)$ are the same: $\lambda_{M(G)} = r(X) + r(Y) - r(M) + 1 = |V(X)|-1 + |V(Y)| - 1 - (|V|-1) + 1 = |V(X) \cap V(Y)| = \lambda_G(X,Y)$.  

A \emph{$k$-separation} of a biased graph $\Om=\GB$ is a $k$-separation of its underlying graph $G$, and the \emph{connectivity} of $\Om$ is that of $G$.  
The \emph{connectivity function} $\lambda_\Om$ of $\Om$ is that of $G$.  
As with graphs, a separation in a biased graph $\Om$ generally has a different order in its frame matroid $F(\Om)$ than in $\Om$.  
But let us assume $\Om$ is connected and unbalanced. Then  
\begin{equation}  \label{eqn:matroid_graph_conn}
\begin{aligned} 
\lambda_{F(\Om)}(X,Y) &= r(X) + r(Y) - r(F(\Om)) + 1 \\ &= |V(X)| - b(X) + |V(Y)| - b(Y) - |V| + 1 \\
&= |V(X) \cap V(Y)| - b(X) - b(Y) + 1 \\ &= \lambda_\Om(X,Y) - b(X) - b(Y) + 1 .
\end{aligned}  
\end{equation}
The following immediate consequences of equation (\ref{eqn:matroid_graph_conn}) will be useful. 
Assume that both sides of a separation $(X,Y)$ induce connected biased subgraphs.  
Then the difference between the order of $(X,Y)$ in $\Om$ and $F(\Om)$ is at most one.  
If $(X,Y)$ has order 1 in $\Om$, and one side induces a balanced biased subgraph, then $(X,Y)$ is a 1-separation of $F(\Om)$; if both sides are unbalanced, then $(X,Y)$ is a 2-separation of $F(\Om)$.  
If $(X,Y)$ has order 2 in $\Om$, then $(X,Y)$ is a 1-separation of the matroid if both sides are balanced, but a 3-separation if both sides are unbalanced.  
We will especially have occasion to use the following fact, which similarly follows immediately from equation (\ref{eqn:matroid_graph_conn}). 

\begin{lem} \label{lem:balancedsideof2sepa2sep} 
If $(X,Y)$ is a 2-separation of $\Om$ with $\Om[X]$ connected and balanced and $\Om[Y]$ connected and unbalanced, then $(X,Y)$ is a 2-separation of $F(\Om)$.  
\end{lem}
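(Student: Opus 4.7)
The plan is to obtain the lemma as a direct corollary of the connectivity-function identity \eqref{eqn:matroid_graph_conn} established just above the statement. First I would check that the identity applies: the hypotheses force $\Om$ to be connected (since $\Om[X]$ and $\Om[Y]$ are each connected and share the two vertices of the 2-separation) and unbalanced (since $\Om[Y]$ is unbalanced), which are exactly the standing assumptions under which \eqref{eqn:matroid_graph_conn} was derived.

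Next I would read off the numerical data from the hypotheses. By assumption $\lambda_\Om(X,Y) = 2$; since $\Om[X]$ is connected and balanced, $b(X) = 1$; since $\Om[Y]$ is connected and unbalanced, $b(Y) = 0$. Plugging into \eqref{eqn:matroid_graph_conn} gives
\[
\lambda_{F(\Om)}(X,Y) \;=\; 2 - 1 - 0 + 1 \;=\; 2,
\]
so $(X,Y)$ has order $2$ in $F(\Om)$.

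Finally I would verify the size condition $|X|, |Y| \ge 2$ required by the paper's definition of a matroid $k$-separation. Since $\Om[Y]$ is connected and unbalanced and contains both distinct vertices of the separator, any unbalanced cycle it contains (even a single unbalanced loop) must be joined by at least one additional edge to reach the other separator vertex, forcing $|Y| \ge 2$. For $X$, the standing simplification assumptions of the paper (no balanced loops, no balanced $2$-cycles) ensure that the only way to have $|X| = 1$ is for $X$ to consist of a single link joining the two separator vertices, a degenerate configuration not arising in the intended applications of this lemma. I anticipate no substantive obstacle: the lemma is essentially a bookkeeping consequence of \eqref{eqn:matroid_graph_conn}, recorded for frequent later use when a balanced side of a 2-separation of $\Om$ needs to be recognized as a genuine 2-separation of $F(\Om)$.
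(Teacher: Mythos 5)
Your proof is correct and matches the paper's: the paper states this lemma as an immediate consequence of equation \eqref{eqn:matroid_graph_conn}, and your substitution $\lambda_{F(\Om)}(X,Y) = 2 - 1 - 0 + 1 = 2$ is exactly that computation, written out. The extra remarks about the size condition $|X|,|Y|\ge 2$ go beyond what the paper records (it silently identifies ``2-separation'' with ``order 2'' here), so no discrepancy arises.
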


We now determine the forms a 2-separation of $F(\Om)$ may take in $\Om$ when $\Om$ is 3-connected. 

\begin{lem} \label{lem:2separationswithOmega3connected}
Let $\Om$ be a 3-connected unbalanced biased graph.  
If $(X,Y)$ is a 2-separation of $F(\Om)$, then either both $\Om[X]$ and $\Om[Y]$ are balanced and connected, or one side of the separation has size 2.  
Further, $\Om$ has the form of one of the biased graphs shown in Figure \ref{fig:2separationswithOmega3connected}. 
\end{lem}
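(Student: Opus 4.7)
The plan is to combine the connectivity formula (\ref{eqn:matroid_graph_conn}) with the $3$-connectivity of $\Om$ to pin down the possible structures. Writing $V_0 = V(X)\cap V(Y)$, setting $\lambda_{F(\Om)}(X,Y)=2$ in (\ref{eqn:matroid_graph_conn}) gives
\[ |V_0| \;=\; b(X)+b(Y)+1. \]

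My first step would be to reduce to the case in which $\Om[X]$ and $\Om[Y]$ are each connected. Every component of $\Om[X]$ must meet $V_0$, since otherwise $\Om$ itself would be disconnected. If $\Om[X]$ has more than one component, then relocating a component across the partition produces a separation of $\Om$ of strictly smaller order than $|V_0|$; the $3$-connectivity of $\Om$ forces every such smaller separation to be improper in $\Om$, which pins the relocated component to be supported entirely on $V_0$ and of a trivial form (an unbalanced loop, or a single link between the vertices of $V_0$) compatible with our simplification conventions. Each such degenerate configuration collapses into one of the final cases below; the same argument handles $\Om[Y]$.

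With both sides connected, $b(X),b(Y)\in\{0,1\}$ and so $|V_0|\in\{1,2,3\}$. I would dispatch the cases in turn. If $|V_0|=1$, then $\lambda_{\Om}(X,Y)=1$; the $3$-connectivity of $\Om$ forces this separation to be improper, so one side is supported on a single vertex $v$ and consists of loops at $v$. Our conventions permit at most one unbalanced loop per vertex, which contradicts $|X|,|Y|\ge 2$. If $|V_0|=2$, then $b(X)+b(Y)=1$; without loss of generality $\Om[X]$ is balanced and $\Om[Y]$ is unbalanced. Here $\lambda_{\Om}(X,Y)=2$, and $3$-connectivity again forces the separation to be improper in $\Om$. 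If $V(X)=V_0$, then $\Om[X]$ is a connected balanced biased graph on two vertices, which under our simplification conventions (no balanced $2$-cycle, no loops in a balanced subgraph) contains at most one edge, contradicting $|X|\ge 2$. Hence $V(Y)=V_0$, and $\Om[Y]$ is supported on exactly two vertices, which is the second alternative. If $|V_0|=3$, then $b(X)=b(Y)=1$ and both sides are balanced and connected, which is the first alternative; here $(X,Y)$ is a proper $3$-separation of $\Om$ consistent with its $3$-connectivity.

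For the ``further'' clause I would enumerate the possible forms of $\Om$. In the $|V_0|=3$ case, $\Om$ is the union of two balanced connected biased subgraphs sharing exactly three vertices. In the $|V_0|=2$ case, with $V(Y)=\{u,v\}$, the biased graph $\Om[Y]$ is unbalanced on two vertices; after simplification it consists of some $u$-$v$ links (pairwise forming unbalanced $2$-cycles) together with at most one unbalanced loop at each of $u$ and $v$, and listing the combinations (subject to $\Om[Y]$ being unbalanced) yields the configurations shown in Figure~\ref{fig:2separationswithOmega3connected}.

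The main obstacle is the initial reduction to connected sides: precisely tracking how $\lambda_{\Om}$ and $\lambda_{F(\Om)}$ change when one moves a component of $\Om[X]$ across the partition, and verifying that the constraints imposed by $3$-connectivity really do collapse every disconnected configuration onto one of the listed structures rather than producing a genuinely new biased graph form. Once this bookkeeping is in place, the remaining case analysis is a clean computation driven entirely by (\ref{eqn:matroid_graph_conn}) and the simplification conventions.
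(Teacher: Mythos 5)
Your starting identity $|V_0| = b(X)+b(Y)+1$ is exactly the one the paper works from, but the plan to first reduce to the case where both $\Om[X]$ and $\Om[Y]$ are connected is where the argument breaks, and the subsequent case analysis inherits the damage. Several of the configurations the lemma must produce have a genuinely disconnected side whose components cannot be ``relocated away'': Figure \ref{fig:2separationswithOmega3connected} includes a side consisting of two unbalanced loops at distinct vertices of $V_0$ (so $|V_0|=2$ with that side disconnected), a side consisting of an unbalanced loop together with a vertex-disjoint link (so $|V_0|=3$ with that side disconnected and unbalanced), and a side consisting of two vertex-disjoint links attached to a balanced part meeting them in four vertices (so $|V_0|=4$). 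Your analysis never reaches $|V_0|=4$ at all, and your $|V_0|=3$ case concludes that both sides are balanced and connected, which fails for the loop-plus-link configuration --- note that $b(X)$ counts balanced \emph{components}, so $b(X)=1$ does not force $\Om[X]$ to be balanced once $\Om[X]$ is disconnected. The relocation step itself is also unsound when $|V_0|=4$: moving a component across the partition yields a separation of $\Om$ of order $3$, which $3$-connectivity does not force to be improper.

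The paper avoids all of this by never assuming the sides are connected: it decomposes both sides into their components (``parts''), rewrites the order-$2$ condition as $2=\sum_P \bigl(|S\cap V(P)| - 2\delta_P\bigr)$ over all parts $P$ (with $\delta_P=1$ precisely when $P$ is balanced), uses $3$-connectivity to show every summand is nonnegative (a part leaving $S$ meets $S$ in at least three vertices; a part inside $S$ is a single loop or link), and then enumerates the five ways the sum can equal $2$. If you want to salvage your route, you must carry the degenerate components explicitly through every case rather than asserting they collapse, and you also need to justify the last step of your $|V_0|=2$ case: ``supported on two vertices'' is weaker than ``has size 2'', since the simplification conventions still permit several parallel links lying in distinct unbalancing classes between the two vertices.
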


\begin{figure}[tbp] 
\begin{center} 
\includegraphics[scale=0.9]{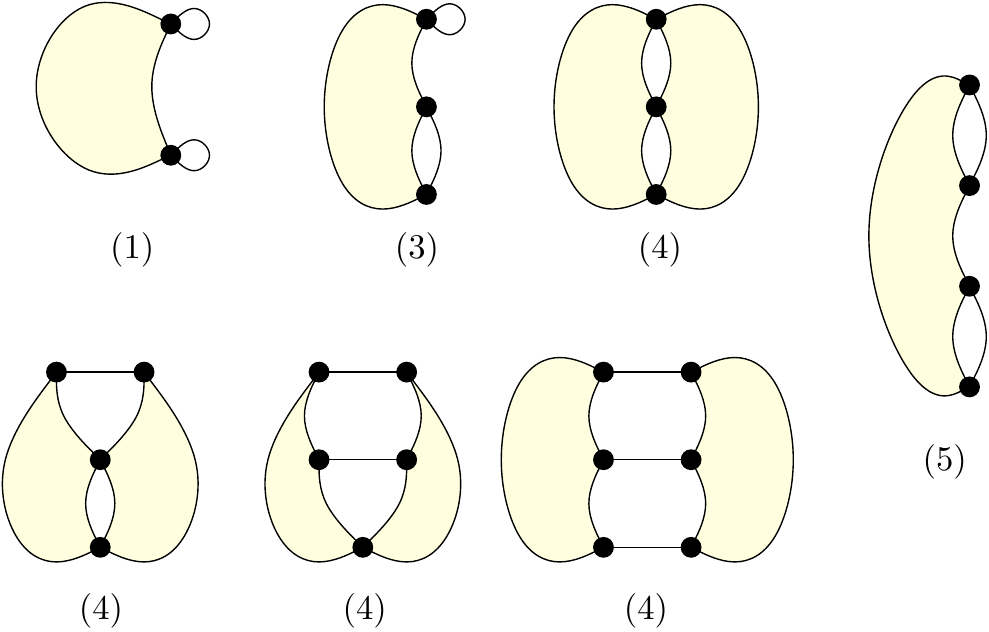}
\end{center} 
\caption{2-separations in $F(\Om)$ when $\Om$ is 3-connected. Each shaded bag of graph is balanced.}
\label{fig:2separationswithOmega3connected} 
\end{figure} 

\begin{proof} 
Let $S = V(X) \cap V(Y)$.  
Let $X_1, \ldots, X_m$ and $Y_1, \ldots, Y_n$ be the partitions of $X$ and $Y$, respectively, so that every induced biased subgraph $\Om[X_i]$, $\Om[Y_j]$ is a connected component of the biased subgraphs $\Om[X]$, $\Om[Y]$, respectively.  
Let us call these biased subgraphs \emph{parts}.  
Let $\delta_{X_i} = 1$ ($\delta_{Y_j}=1$) if $\Om[X_i]$ ($\Om[Y_j]$) is balanced and $\delta_{X_i}=0$ ($\delta_{Y_j}=0$) otherwise.  
Then $\lambda_{F(\Om)}(X,Y) = 2 = |S| + 1 - \sum_{i=1}^m \delta_{X_i} - \sum_{j=1}^n \delta_{Y_j}$.  
Since each vertex in $S$ is in exactly one part from each side, doubling each side of this equation and rearranging, we obtain 
\[ 2 = \sum_{i=1}^m \br{|S \cap V(X_i)| - 2\delta_{X_i}} + \sum_{j=1}^n \br{|S \cap V(Y_j)| - 2\delta_{Y_j}} .\] 
Since $\Om$ is 3-connected, parts that contain a vertex not in $S$ contain at least three vertices in $S$, and parts 
having all vertices in $S$ 
consist of either an unbalanced loop incident to a single vertex in $S$ or a single edge linking two vertices in $S$.  
Hence every term in the sums on the right-hand side of the equation above is nonnegative.  
Letting $t$ be the number of vertices in $S$ contained in a part, a balanced part contributes $t-2$ to the sum above, while an unbalanced part contributes $t$.  
Parts that are balanced with exactly two vertices in $S$ contribute 0 to the sum; let us call such a part \emph{neutral}. 
Since the total sum is two, the possibilities for the parts of $\Om[X]$ and $\Om[Y]$ (ignoring connectivity constraints for now) are: 
\begin{enumerate} 
\item  two unbalanced parts each with one vertex in S and all other parts neutral; 
\item  one unbalanced part with two vertices in $S$ and all other parts neutral; 
\item  one balanced part with three vertices in $S$, one unbalanced part with one vertex in $S$, and all other parts neutral; 
\item  two balanced parts with three vertices in $S$ and all other parts neutral; or
\item  one balanced part with four vertices in $S$ and all other parts neutral.  
\end{enumerate}
Since $\Om$ is 3-connected, the unbalanced parts in cases (1) and (3) each consist of a single loop, and case (2) cannot occur.  
The 3-connectedness of $\Om$ further implies that: in case (1), there is just one neutral part; in case (3) there is just one neutral part consisting of a single edge; in case (4) the neutral parts each consist of a single edge; and in case (5) there are exactly two neutral parts each consisting of a single edge.  
The possibilities are illustrated in Figure \ref{fig:2separationswithOmega3connected}, where each shaded bag of graph is balanced. 
More precisely, 
\begin{itemize} 
\item $V(X) \cap V(Y)$ contains only vertices illustrated in the figure as black discs, 
\item $V(X) \cap V(Y)$ has size 2 in case (1), size 3 in cases (3) and (4), and size 4 in case (5), and 
\item with the exception of loops all cycles contained entirely in one side of the separation are balanced. \qedhere
\end{itemize}
\end{proof}

The following is a straightforward corollary of Lemma \ref{lem:2separationswithOmega3connected}. 

\begin{lem} \label{lem:G_3conn_so_FG_3conn}
Let $\Om$ be a biased graph with no balanced loops, no balanced 2-cycles, and at most one unbalanced loop incident to a vertex. Further suppose that $\Om$ has a balancing vertex and $F(\Om)$ is nongraphic. If $\Om$ is 3-connected, then $F(\Om)$ is 3-connected. 
\end{lem}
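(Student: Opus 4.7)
I would argue by contradiction: suppose $F(\Om)$ has a $k$-separation $(X,Y)$ with $k \le 2$. Since $F(\Om)$ is nongraphic, $\Om$ must be unbalanced (otherwise $F(\Om) = M(G)$), and every unbalanced loop of $\Om$ is incident to the balancing vertex $u$ (since deleting $u$ destroys all unbalanced cycles, including any loops). The plan is then to rule out both $k = 2$ and $k = 1$ separately.

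For $k = 2$, I would apply Lemma \ref{lem:2separationswithOmega3connected} to place $\Om$ into one of the four forms of Figure \ref{fig:2separationswithOmega3connected}. Form (1) has two unbalanced loops at distinct vertices, so no single vertex can be balancing---this form is excluded. For forms (3), (4), (5), I would first argue that $u$ must sit at a location dictated by the structure: at the unbalanced loop in (3); at one of the three vertices of $S$ in (4); and at an endpoint of one of the neutral edges in (5), where moreover only one of the two neutral edges may carry unbalanced cycles through it (else the balancing vertex would have to belong to two disjoint vertex pairs simultaneously). I would then count unbalancing classes in $\delta(u)$: each balanced part retains enough local $2$-connectedness from $\Om$ to produce a balanced cycle through any two of its $u$-incident edges, contributing just one class, and any neutral edge at $u$ joins the class of the balanced part on its side via a side-internal balanced cycle (which, not being a loop, is balanced by Lemma \ref{lem:2separationswithOmega3connected}). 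Hence $\delta(u)$ has at most two unbalancing classes, so by Lemma \ref{obs:relabelling_when_there_is_a_balancing_vertex}, $\Om \setminus U$ is a signed graph; adjoining $U$ to the signature makes $\Om$ itself a signed graph, whence Proposition \ref{prop:vertex_splitting_operation} forces $F(\Om)$ to be graphic---the desired contradiction.

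For $k = 1$, I would reuse the counting argument from the proof of Lemma \ref{lem:2separationswithOmega3connected}, with $\lambda_{F(\Om)} = 1$ in place of $2$. This yields
\[
\sum_i \bigl(|S \cap V(X_i)| - 2\delta_{X_i}\bigr) + \sum_j \bigl(|S \cap V(Y_j)| - 2\delta_{Y_j}\bigr) = 0.
\]
Every summand is nonnegative by the $3$-connectedness of $\Om$, so each part contributes zero; the simplification hypotheses then forbid unbalanced loops (which would contribute $1$) and force every part to be a single edge between two vertices of $S$. Consequently $\Om$ would be the edge-disjoint union of two matchings, hence of maximum degree at most $2$, contradicting $3$-connectedness. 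The main obstacle I expect lies in the bookkeeping for forms (3)--(5) of the $2$-separation: extracting enough $2$-connectedness from each balanced part after the structural deletions implied by the separation, so that its contribution to $\delta(u)$ collapses to a single unbalancing class, and tracking the biases of cross-side cycles using rerouting (Lemma \ref{lem:rerouting_along_a_bal_cycle}) to confirm which of $e_1,e_2$ in case (5) forces the position of $u$.
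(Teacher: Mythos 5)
Your argument is correct and is essentially the paper's own proof in expanded form: the paper disposes of this lemma in one line by citing Lemma \ref{lem:2separationswithOmega3connected} and asserting that each of forms (1), (3), (4), (5) either admits no balancing vertex or yields a graphic frame matroid, which is exactly the case analysis you carry out (via the count of unbalancing classes at $u$ and Proposition \ref{prop:vertex_splitting_operation}). Your separate treatment of $1$-separations by rerunning the counting argument with total $0$ is a detail the paper leaves implicit, but it follows the same scheme and reaches the same conclusion.
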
 

\begin{proof} 
In each of the possible cases (1), (3), (4), or (5) of Lemma \ref{lem:2separationswithOmega3connected}, either the biased graph cannot have a balancing vertex or its associated frame matroid is graphic.  
\end{proof}

\subsection{Cocircuits and hyperplanes in biased graphs.} 

The set of all edges incident to $v$ we denote by $\delta(v)^+$; that is, $\delta(v)^+ = \delta(v) \cup \{e : e$ is a loop incident to $v\}$.  
We denote by $\Omega - v$ the biased graph $(G - v, \Bb')$, where $\Bb'$ consists of all cycles in $\Bb$ that do not contain $v$.  

\begin{lem} \label{lem:v_not_bal_implies_star_at_v_a_cocircuit_1} 
Let $\Om$ be a 2-connected biased graph containing an unbalanced cycle.  
For each $v \in V(\Om)$, $\delta(v)^+$ is a cocircuit of $F(\Om)$ \iiff $v$ is not balancing.  
\end{lem}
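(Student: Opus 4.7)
The plan is to work directly from the rank formula $r(X) = |V(X)| - b(X)$ (where $b$ counts balanced components), using the basic fact that a set $C^*$ is a cocircuit of a matroid $M$ if and only if its complement $E \setminus C^*$ is a hyperplane, i.e.\ a flat of rank $r(M)-1$. Observe that $E \setminus \delta(v)^+$ is precisely $E(\Om - v)$, so our task reduces to determining when $E(\Om - v)$ is a hyperplane.

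First I would compute the relevant ranks. Since $\Om$ is connected and contains an unbalanced cycle, $\Om$ is unbalanced and connected, so $b(\Om) = 0$ and $r(E) = |V|$. Since $\Om$ is 2-connected, $\Om - v$ is connected, so $b(\Om - v) \in \{0,1\}$ and
\[ r\bigl(E \setminus \delta(v)^+\bigr) = |V| - 1 - b(\Om - v) . \]
Hence $r(E \setminus \delta(v)^+) = r(E) - 1$ if and only if $b(\Om - v) = 0$, i.e.\ if and only if $\Om - v$ is unbalanced, i.e.\ if and only if $v$ is not balancing. This immediately yields the forward implication: if $v$ is balancing then the complement has rank $|V|-2$ and so cannot be a hyperplane.

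For the reverse implication, assume $v$ is not balancing. The rank condition is already verified, so it remains to show $E \setminus \delta(v)^+$ is a flat. I would check that adding any single edge $e \in \delta(v)^+$ strictly increases the rank. Split into cases: if $e$ is a loop at $v$, the new subgraph has vertex set $V \setminus \{v\} \cup \{v\}$, adding the isolated component $\{v\}$ with an unbalanced loop, which is unbalanced; so $b$ stays at $0$ while $|V|$ grows by $1$, giving rank $|V|$. If $e = vu$ is a link, the new subgraph is connected on all of $V$ and still contains the unbalanced cycle witnessing $b(\Om-v) = 0$, so it remains unbalanced; again rank becomes $|V|$. In both subcases the rank strictly increases, proving closure.

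I do not foresee any serious obstacle: the argument is essentially a bookkeeping exercise with the formula $r(X) = |V(X)|- b(X)$ together with the trivial observation that 2-connectedness of $\Om$ guarantees connectedness of $\Om - v$. The only mildly careful point is checking the flat (closure) condition in the reverse direction, and this is disposed of by a brief case split on whether the added edge is a loop at $v$ or a link incident to $v$.
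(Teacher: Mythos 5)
Your proposal is correct and follows essentially the same route as the paper: both compute $r(E\setminus\delta(v)^+)$ via the rank formula $r(X)=|V(X)|-b(X)$, using 2-connectedness to get connectedness of $\Om-v$, and conclude that the complement is a hyperplane precisely when $v$ is not balancing. Your explicit loop-versus-link case split for the closure check is a detail the paper simply asserts (note it relies, as does the lemma itself, on the standing convention that loops are unbalanced), but the argument is the same.
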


\begin{proof} 
Let $n = |V(\Om)|$.  
Suppose $v \in V(\Om)$ is not a balancing vertex.  
Since the graph $\Om - v$ is connected and contains an unbalanced cycle, 
$r(E \setminus \delta(v)^+) = n-1 = r(F(\Om))-1$. 
If $e \in \delta(v)^+$, then $r(E \setminus \delta(v)^+ \cup \{e\}) = n$.  
Hence $E \setminus \delta(v)^+$ is a hyperplane, so $\delta(v)^+$ is a cocircuit.  
On the other hand, if $v \in V(\Om)$ is balancing then since $\Om-v$ is balanced and connected,  
$r(E \setminus \delta(v)^+) = (n-1) - 1 = n-2$.  
Thus $E \setminus \delta(v)^+$ is not a hyperplane.  
\end{proof}

\begin{lem} \label{lem:G3-conn_F(G-v)_disconn_implies_F(G-v)_graphic} 
Let $\Om$ be a 3-connected biased graph, and let $v \in V(\Om)$.  
If $F(\Om-v)$ is disconnected, then $\Om-v$ is a pinch and $E(\Om-v)$ is a graphic hyperplane of $F(\Om)$.  
\end{lem}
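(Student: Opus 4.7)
The plan is to use equation~(\ref{eqn:matroid_graph_conn}) to translate the hypothesis that $F(\Om-v)$ is disconnected into structural information about $\Om-v$, and then show that $\Om-v$ is a signed graph with a balancing vertex.  Being such, $\Om-v$ is a pinch, and Proposition~\ref{prop:vertex_splitting_operation} delivers a graph $H$ with $M(H)=F(\Om-v)$, so $F(\Om-v)$ is graphic. Since $\Om-v$ will be unbalanced, $v$ is not a balancing vertex of $\Om$, so by Lemma~\ref{lem:v_not_bal_implies_star_at_v_a_cocircuit_1}, $\delta(v)^+$ is a cocircuit of $F(\Om)$, making $E(\Om-v)$ the required graphic hyperplane.

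First, $G-v$ is $2$-connected (as $\Om$ is $3$-connected), so $\Om-v$ is connected. If $\Om-v$ were balanced, $F(\Om-v)=M(G-v)$ would be connected, contrary to hypothesis; so $\Om-v$ is unbalanced. Fix a $1$-separation $(X,Y)$ of $F(\Om-v)$ and set $S=V(X)\cap V(Y)$; equation~(\ref{eqn:matroid_graph_conn}) gives $|S|=b(X)+b(Y)$, with $|S|\geq 2$ by $2$-connectedness. By $2$-connectedness of $\Om-v$, each component of $(\Om-v)[X]$ either is a single unbalanced loop at a vertex of $S$, or contains at least two vertices of $S$ (otherwise an $S$-vertex would be a cut vertex of $\Om-v$). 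Summing $\sum_i|V(X_i)\cap S|=|S|$ and using that balanced components are loopless yields $|S|\geq 2b(X)+u_X$ and, symmetrically, $|S|\geq 2b(Y)+u_Y$, where $u_X$ and $u_Y$ count unbalanced components on each side. Combined with $|S|=b(X)+b(Y)$, these force $u_X=u_Y=0$ and $b(X)=b(Y)=|S|/2$, so every component of both $(\Om-v)[X]$ and $(\Om-v)[Y]$ is balanced and meets $S$ in exactly two vertices.

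The $X$- and $Y$-components therefore define two perfect matchings on $S$ whose union, by connectedness of $\Om-v$, is a single cycle on $S$. A cycle of $\Om-v$ sitting inside one component is balanced, so any unbalanced cycle traverses at least two components, and its skeleton at the $S$-level is a simple cycle in this overlap graph---which is the whole $|S|$-cycle---forcing every unbalanced cycle to visit every vertex of $S$. In particular every $s\in S$ is a balancing vertex of $\Om-v$ and $\Om-v$ has no unbalanced loops. To see that $\Om-v$ is signed via Proposition~\ref{prop:If_no_odd_theta}, suppose a contrabalanced theta exists: each of its three cycles contains all of $S$, but internally disjoint paths share only the two corners, forcing $|S|=2$ and identifying the corners with $S$. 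In this remaining case each theta-path lies wholly in one of the two balanced, connected sides, so pigeonholing three paths into two sides gives two paths in a common side whose union is a balanced cycle of the theta---a contradiction. Hence $\Om-v$ is signed, and with a balancing vertex it is a pinch, completing the argument. I expect the pigeonhole count forcing every component on both sides to be balanced with exactly two vertices of $S$ to be the main obstacle; with that in hand, the overlap-cycle observation and contrabalanced-theta exclusion follow cleanly.
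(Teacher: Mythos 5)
Your proof is correct, but it reaches the conclusion by a genuinely different route than the paper. The paper begins by invoking Zaslavsky's description of the components of a frame matroid to choose a $1$-separation $(X,Y)$ of $F(\Om-v)$ with \emph{both} induced subgraphs connected; it then argues directly with circuits: a balanced crossing cycle would be a circuit meeting both sides, an unbalanced cycle on one side plus two disjoint paths to the other side yields a contrabalanced theta crossing the separation, and three shared vertices yield a crossing theta or pair of handcuffs. This pins down $|V(X)\cap V(Y)|=2$ with both sides balanced, and a signature ($\delta(x)\cap X$ for $x$ in the separator) is then written down explicitly. You instead work with an arbitrary $1$-separation, use the connectivity identity (equation~(\ref{eqn:matroid_graph_conn})) to get $|S|=b(X)+b(Y)$, and run a counting/pigeonhole argument to force every component on both sides to be balanced with exactly two vertices in $S$; the resulting union-of-two-perfect-matchings structure shows every unbalanced cycle passes through all of $S$, whence every $S$-vertex is balancing, and you exclude contrabalanced thetas to invoke Proposition~\ref{prop:If_no_odd_theta} rather than exhibiting a signature. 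Both proofs finish identically (splitting plus Lemma~\ref{lem:v_not_bal_implies_star_at_v_a_cocircuit_1}). Your version avoids the appeal to Zaslavsky's component theorem and tolerates disconnected sides (your ``necklace'' picture with $|S|>2$ genuinely occurs for some separations and your argument handles it), at the cost of a longer combinatorial analysis; the paper's version is shorter but leans on having chosen a separation with connected sides. One small remark: your appeal to $2$-connectedness for $|S|\geq 2$ silently skips the case where one side consists entirely of loops at a single vertex of $S$, but your subsequent parity count ($b(X)=b(Y)=|S|/2$) rules that out anyway, so nothing is lost.
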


\begin{proof}
Choose a separation $(X,Y)$ of $F(\Om-v)$ with $\Om[X]$ and $\Om[Y]$ connected (Zaslavsky's characterisation of what the components of a frame matroid look like in a biased graph representation \cite[Theorem 2.8]{MR1088626} guarantees such a separation exists).  
A balanced cycle crossing the separation (\ie, containing an edge in each of $X$ and $Y$) would be a circuit of $F(\Om-v)$, so all such cycles are unbalanced.  
We claim each of $(\Om-v)[X]$ and $(\Om-v)[Y]$ are balanced.  
Suppose to the contrary that $(\Om-v)[X]$ is unbalanced; say $C \subseteq (\Om-v)[X]$ is an unbalanced cycle.  
Let $e \in Y$.  
Since $\Om-v$ is 2-connected, there are two disjoint paths linking $C$ and $e$. 
Together with $e$ and $C$, these paths form a theta subgraph $T$ of $G-v$.  
Then all three cycles in $T$ are unbalanced, so $T$ is a circuit of $F(\Om-v)$ containing an element of $X$ and an element of $Y$, a contradiction.  

We now show that $|V(X) \cap V(Y)| = 2$.  
Suppose for a contradiction that $|V(X) \cap V(Y)| > 2$.  
Let $x, y, z \in V(X) \cap V(Y)$.  
Since each of $\Om[X]$ and $\Om[Y]$ are connected, there is an $x$-$y$ path $P$ in $(\Om-v)[X]$, and an $x$-$y$ path $P'$ in $(\Om-v)[Y]$. 
Let $Q$ be a $P$-$z$ path in $(\Om-v)[X]$, and let $Q'$ be a $P'$-$z$ path in $(\Om-v)[Y]$ (where $Q$ or $Q'$ are allowed to be trivial).  
Then $P \cup P' \cup Q \cup Q'$ contains either a theta subgraph $T$ of $\Om-v$ in which all three cycles cross the separation, or a pair of handcuffs both cycles of which cross the separation.  
In either case, we have a circuit of $F(\Om-v)$ meeting both $X$ and $Y$, a contradiction.  

Together these facts imply that $\Om-v$ is a signed graphic pinch: a signature $\Sigma$ realising the biases of its cycle is obtained by choosing a vertex $x \in V(X) \cap V(Y)$ and setting $\Sigma$ to be all edges in $\delta(x) \cap X$.  
\begin{figure}[tbp]
\begin{center} 
\includegraphics[scale=0.9]{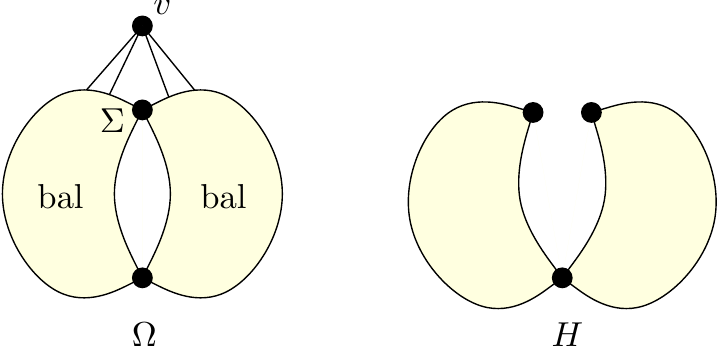}
\caption{$\Om$ is 3-connected, but $F(\Om-v)$ is disconnected.}  
\label{fig:G_3-conn_F(G-v)_disconnected}
\end{center}
\end{figure}
Splitting $x$, we obtain a graph $H$ with $M(H) = F(\Om-v)$ (Figure \ref{fig:G_3-conn_F(G-v)_disconnected}).  
Since $\Om-v$ is unbalanced, by Lemma \ref{lem:v_not_bal_implies_star_at_v_a_cocircuit_1} $\delta(v)^+$ is a cocircuit, so $E(\Om-v)$ is a hyperplane of $F(\Om)$.  
\end{proof}

\subsection{Committed vertices.} 
\label{sec:committedvertices} 

Let $M$ be a frame matroid represented by the biased graph $\Om = \GB$.  
For determining other possible biased graphs representing $M$ that are not isomorphic to $\Om$, the following observation is key.  

\begin{prop}[Slilaty, \cite{MR2245651}] \label{prop:comp_cocircuit_connnonbinhyperplane}
If $\Om$ is a connected biased graph with no balanced loops, then the complementary cocircuit of a connected nongraphic hyperplane of $F(\Om)$ consists precisely of the set of edges incident to a vertex.  
\end{prop}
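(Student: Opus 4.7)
The plan is to pinpoint the vertex $v$ whose incident edges comprise the complementary cocircuit, using the rank formula $r(X) = |V(X)| - b(X)$ for frame matroids together with Zaslavsky's description of matroid-connectedness in biased graphs. First I would observe that $\Om$ must be unbalanced: if every cycle were balanced, then $F(\Om) = M(G)$ would be graphic, and so would its restriction to any hyperplane, contradicting the nongraphicity of $H$. Since $\Om$ is connected and unbalanced, $r(F(\Om)) = |V(\Om)|$, and applying the rank formula to the hyperplane gives
\[ |V(H)| - b(H) = r(H) = |V(\Om)| - 1, \]
so $|V(\Om) \setminus V(H)| + b(H) = 1$. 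Hence exactly one of the following holds:
\begin{enumerate}[(i)]
\item there is a unique vertex $v \in V(\Om) \setminus V(H)$ and $b(H) = 0$; or
\item $V(H) = V(\Om)$ and $\Om[H]$ has exactly one balanced component.
\end{enumerate}

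Next I would rule out case (ii). Let $A$ be the edge set of the unique balanced component of $\Om[H]$. If $H = A$, then $\Om[H]$ is balanced and $F(\Om|_H) = M(\Om[H])$ is graphic, contradicting the nongraphicity of $H$. Otherwise set $B = H \setminus A$, the union of edge sets of the remaining components of $\Om[H]$, each of which must be unbalanced. Since distinct components are vertex-disjoint, $|V(A)| + |V(B)| = |V(H)|$, and
\[ r(A) + r(B) = \bigl(|V(A)| - 1\bigr) + |V(B)| = |V(H)| - 1 = r(H), \]
so $(A,B)$ is a $1$-separation of $F(\Om|_H)$, contradicting that $H$ is matroid-connected. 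Therefore case (i) must hold.

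Finally I would verify that $E(\Om) \setminus H = \delta(v)^+$. Any edge incident to $v$ has $v$ as an endpoint, so $v \in V(H)$ would follow from it being in $H$; since $v \notin V(H)$, no such edge lies in $H$, giving $\delta(v)^+ \subseteq E(\Om) \setminus H$. Conversely, suppose $e \in E(\Om) \setminus H$ is not incident to $v$. Then both endpoints of $e$ lie in $V(H) = V(\Om) \setminus \{v\}$, so $V(H \cup \{e\}) = V(H)$ and
\[ r(H \cup \{e\}) \le |V(H)| = |V(\Om)| - 1 < |V(\Om)| = r(F(\Om)). \]
But $H$ is a hyperplane and $e \notin H$, so $H \cup \{e\}$ must span $F(\Om)$, a contradiction. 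Hence every edge of $E(\Om) \setminus H$ is incident to $v$, and $E(\Om) \setminus H = \delta(v)^+$.

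The main obstacle is case (ii), where both hypotheses on $H$ are essential and must be used in tandem: nongraphicity is needed to rule out the sub-possibility that $\Om[H]$ is entirely balanced, while matroid-connectedness is needed to rule out the sub-possibility that $\Om[H]$ contains a balanced component alongside one or more unbalanced components. Once case (ii) is eliminated, everything reduces to bookkeeping with the rank formula.
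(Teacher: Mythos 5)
Your proof is correct. It reaches the same underlying dichotomy as the paper's argument --- the complementary cocircuit either isolates a vertex or is a balancing set --- but by a different route. The paper invokes the structural description of cocircuits in a frame matroid (a cocircuit is a union $S \cup B$ of a separating edge set and a minimal balancing set of a component of $\Om \setminus S$), then uses matroid-connectedness of the hyperplane to reduce to the two alternatives and nongraphicity to kill the balancing-set alternative. You instead derive the dichotomy purely from the rank formula $r(X) = |V(X)| - b(X)$: the equation $|V(\Om)\setminus V(H)| + b(H) = 1$ cleanly separates the two cases, nongraphicity and matroid-connectedness together eliminate the $b(H)=1$ case (balanced connected implies graphic; a balanced component alongside unbalanced ones gives a $1$-separation), and the hyperplane property forces every edge of the complement to meet the missing vertex. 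Your version is more self-contained, since it needs only the rank function rather than the cocircuit characterisation, and the final step (that $H \cup \{e\}$ cannot span if $e$ avoids $v$) is a nice direct use of the hyperplane hypothesis; the paper's version is shorter because it outsources the case analysis to the known description of cocircuits. A minor observation: your argument never actually invokes the ``no balanced loops'' hypothesis --- this is consistent, since a balanced loop is a matroid loop and hence lies in every flat, so the vertex $v$ singled out in your case (i) automatically carries no balanced loop.
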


Because Proposition \ref{prop:comp_cocircuit_connnonbinhyperplane} is central to our argument, we provide a proof for the convenience of the reader. 

\begin{proof}[Proof of Proposition \ref{prop:comp_cocircuit_connnonbinhyperplane}]
Call a set of edges whose removal results in a balanced biased graph a \emph{balancing} set.  
Since a cocircuit of $F(\Omega)$ is a minimal set of edges whose removal increases the number of balanced components by one, a cocircuit $D$ can be written as a disjoint union $D = S \cup B$ where $S = \emptyset$ or $S$ is a separating edge set of $\Omega$ and $B = \emptyset$ or $B$ is a minimal balancing set of an unbalanced component of $\Omega \setminus S$.  
If a biased graph has two components with nonempty edge sets, then its matroid cannot be connected (so a connected hyperplane in $F(\Omega)$ has at most one component in $\Omega$ with edges).  
Hence the complementary cocircuit of a connected hyperplane of $\Omega$ must be either the set of edges incident to a vertex or a minimal balancing set of $\Omega$.  
The frame matroid of a balanced biased graph is the cycle matroid of the graph, so if $X$ is a connected hyperplane whose complementary cocircuit 
is a minimal balancing set, then $X$ is graphic.  
Hence if $X$ is a connected nongraphic hyperplane of $F(\Om)$, then the complementary cocircuit of $X$ must be the set of edges incident to a vertex.  
\end{proof}

This motivates the following definition.  
\begin{defn} \label{defn:Committedvertex} 
A vertex $x \in V(\Om)$ is \emph{committed} if ${E \setminus \delta(x)^+}$ is a connected nongraphic hyperplane of $F(\Om)$.  
\end{defn}
If $\Om'$ is a biased graph with $F(\Om') = F(\Om)$, Proposition \ref{prop:comp_cocircuit_connnonbinhyperplane} says that for every committed vertex $x \in V(\Om)$, there is a vertex $x' \in V(\Om')$ with precisely the same set of incident edges.  
This notion is the main tool in our proof of Theorem \ref{mainthm:main_bal_vertex_rep}, so we will often wish to determine when a restriction of $F(\Om)$ to a set of edges $X$ is nongraphic.  
The easiest way to do this is to find an excluded minor for the class of graphic matroids in $\Om[X]$.  

Minors of biased graphs are defined as follows.  
Let $\Om=\GB$ be a biased graph, and $e \in E(G)$.  
The biased graph $\Om \backslash e$ is the biased graph $(G \backslash e, \Bb')$, where $\Bb' = \{ C : C \in \Bb$ and $e \notin C\}$.  
As long as $e$ is not a loop, the biased graph $\Om/e$ is the biased graph $(G/e,\Bb'')$, where $\Bb'' = \{C : C \cup e \in \Bb\}$.  
We have no need to delete or contract loops in this paper (definition are available in the literature, see for instance \cite[Sec.\ 6.10]{oxley:mt2011}).  
These operations are defined so that they are consistent with the corresponding minor operations in matroids: $F(\Om) \backslash e = F(\Om \backslash e)$ and $F(\Om)/e = F(\Om/e)$. 

The following lemma says that in a 3-connected biased graph, to determine that a vertex is committed it is enough to find a $U_{2,4}$ minor in the complement of the set of its incident edges.  

\begin{lem} \label{lem:x_committed_iff_U24} 
Let $\Om$ be 3-connected biased graph with a balancing vertex.  
Then $x \in V(\Om)$ is committed \iiff $F(\Om-x)$ is nonbinary.  
\end{lem}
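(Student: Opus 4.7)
My plan is to observe that the restriction $F(\Omega)|(E\setminus\delta(x)^+)$ equals $F(\Omega-x)$ (deletion in the matroid corresponds to vertex deletion's edge set in the biased graph), and then attack the two directions using the preceding lemmas together with one structural input about frame matroids, namely that a frame matroid is graphic if and only if it is binary. This equivalence follows from the characterization of biased graphs representing graphic matroids in \cite{RongPaper} (or equivalently, from the fact that every nongraphic frame matroid contains $U_{2,4}$ as a minor), and is what makes ``nongraphic'' interchangeable with ``nonbinary'' throughout the argument.

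For the forward direction, suppose $x$ is committed. Then by Definition \ref{defn:Committedvertex}, $E\setminus\delta(x)^+$ is a connected nongraphic hyperplane of $F(\Omega)$; in particular $F(\Omega-x)$ is nongraphic, and by the equivalence above it is nonbinary.

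For the reverse direction, assume $F(\Omega-x)$ is nonbinary, hence nongraphic. Since a balanced biased graph has graphic frame matroid, $\Omega-x$ must contain an unbalanced cycle. As every unbalanced cycle of $\Omega$ passes through its balancing vertex, this shows $x$ is not a balancing vertex of $\Omega$; moreover $\Omega$ itself contains an unbalanced cycle and is 2-connected, so Lemma \ref{lem:v_not_bal_implies_star_at_v_a_cocircuit_1} yields that $\delta(x)^+$ is a cocircuit of $F(\Omega)$, i.e.\ $E\setminus\delta(x)^+$ is a hyperplane. It remains to check connectedness of this hyperplane, which amounts to showing $F(\Omega-x)$ is a connected matroid. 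If not, then since $\Omega$ is 3-connected, Lemma \ref{lem:G3-conn_F(G-v)_disconn_implies_F(G-v)_graphic} forces $F(\Omega-x)$ to be graphic, hence binary, contradicting our standing assumption. Thus $E\setminus\delta(x)^+$ is a connected nongraphic hyperplane, and $x$ is committed.

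The only substantive obstacle is the forward direction: one must know (and cite) the fact that a nongraphic frame matroid is automatically nonbinary. Once that is in hand, the reverse direction is a clean two-step argument — first promote ``$F(\Omega-x)$ nongraphic'' to ``$\delta(x)^+$ is a cocircuit'' via Lemma \ref{lem:v_not_bal_implies_star_at_v_a_cocircuit_1}, then promote it to matroid-connectedness via Lemma \ref{lem:G3-conn_F(G-v)_disconn_implies_F(G-v)_graphic}.
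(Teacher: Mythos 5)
There is a genuine gap in your forward direction. The ``structural input'' you rely on --- that a frame matroid is graphic if and only if it is binary, equivalently that every nongraphic frame matroid has a $U_{2,4}$ minor --- is false. Both $M^*(K_{3,3})$ and $M^*(K_5)$ are binary, nongraphic \emph{frame} matroids; indeed the paper itself exhibits their biased graph representations in Figure \ref{fig:biased_graphs_of_ex_min_for_graphic}. So from ``$F(\Om-x)$ is nongraphic'' you cannot in general conclude ``$F(\Om-x)$ is nonbinary,'' and this is precisely the implication your forward direction needs.

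This is exactly where the hypothesis that $\Om$ has a balancing vertex --- which your argument never uses --- does its work. The paper's proof observes that $\Om-x$ is either balanced (hence graphic) or itself has a balancing vertex; that having a balancing vertex is preserved under deleting and contracting links; and that none of the biased graph representations of $M^*(K_{3,3})$ or $M^*(K_5)$ has a balancing vertex. Since $F_7$ and $F_7^*$ are not frame, a nongraphic binary $F(\Om-x)$ would have to contain an $M^*(K_{3,3})$ or $M^*(K_5)$ minor, which the balancing-vertex structure rules out; hence nongraphic does imply nonbinary \emph{for this particular matroid}. You need to supply this argument (or an equivalent one) rather than the false general equivalence. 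Your reverse direction --- promoting ``nonbinary, hence nongraphic'' to ``committed'' via Lemmas \ref{lem:v_not_bal_implies_star_at_v_a_cocircuit_1} and \ref{lem:G3-conn_F(G-v)_disconn_implies_F(G-v)_graphic} --- is sound and is essentially the contrapositive of the first half of the paper's proof.
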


\begin{proof} 
If $F(\Om-x)$ is graphic, then by definition $x$ is not committed.  
Conversely, suppose $x$ is not committed, \ie, $E \setminus \delta(x)^+$ fails to be a connected nongraphic hyperplane of $F(\Om)$.  
If $F(\Om-x)$ fails to be connected, then by Lemma \ref{lem:G3-conn_F(G-v)_disconn_implies_F(G-v)_graphic} it is graphic.  
If $F(\Om-x)$  
fails to be a hyperplane, then by Lemma \ref{lem:v_not_bal_implies_star_at_v_a_cocircuit_1} $x$ is balancing, so $F(\Om-x)$ is graphic.  
The remaining possibility is that $E \setminus \delta(x)$ is connected and a hyperplane, but graphic.  
\Ie, in any case, $F(\Om-x)$ is graphic.  
This shows $x$ is committed \iiff $F(\Om-x)$ is nongraphic.  

Now,  if $F(\Om-x)$ is nongraphic, then it can only be binary if it contains an $M^*(K_{3,3})$ or $M^*(K_5)$ minor (since neither $F_7$ nor $F_7^*$ are frame).  
But $\Om-x$ has the property that it is either balanced, in which case $F(\Om-x)$ is graphic and $x$ is uncommitted, or contains a balancing vertex.  
The property of having a balancing vertex is closed under deletion and contraction of links.  
It is not hard to see that $\Om-x$ has one of the biased graphs representing $M^*(K_{3,3})$ or $M^*(K_5)$ as a minor \iiff it has one of these biased graphs as a minor obtained by deleting or contracting only links.  
But none of the biased graph representations of $M^*(K_{3,3})$ and $M^*(K_5)$ has a balancing vertex (see Figure \ref{fig:biased_graphs_of_ex_min_for_graphic}), so $\Om-x$ cannot have either as a minor. 
Hence $F(\Om-x)$ is nongraphic \iiff $F(\Om-x)$ is nonbinary.  
\end{proof}
\begin{figure}[tbp] 
\begin{center} 
\includegraphics[scale=0.9]{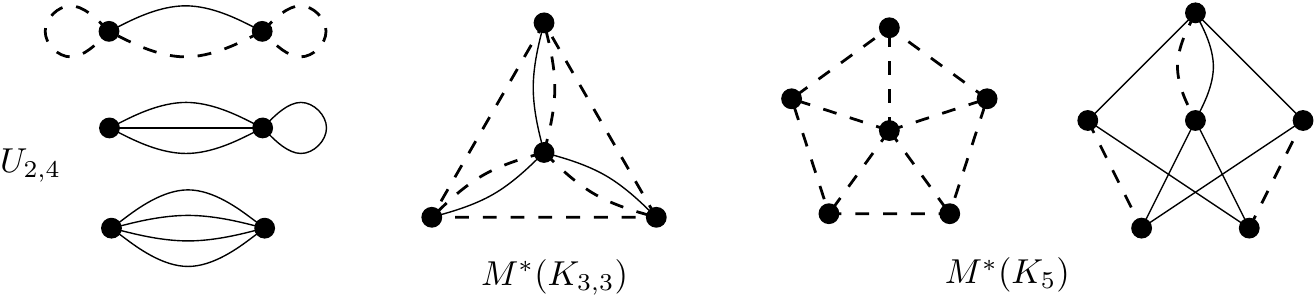}
\end{center} 
\caption{The biased graphs whose frame matroids are excluded minors for the class of graphic matroids.  The three biased graphs representing $U_{2,4}$ are contrabalanced.  The biased graphs with dashed edges are signed graphic, with signature indicated by dashed edges.}
\label{fig:biased_graphs_of_ex_min_for_graphic} 
\end{figure} 

By Lemma \ref{lem:x_committed_iff_U24}, when seeking to determine whether a vertex $x$ is committed, we just need find a $U_{2,4}$ minor in $F(\Om-x)$ or observe that none exists.  
The following lemma will help us find a $U_{2,4}$ minor.    
Let $x, y$ be a pair of vertices and $Q_1$, $Q_2$, and $Q_3$ be three internally disjoint paths $x$-$y$ paths comprising a contrabalanced theta graph $T$.  
We call $x$ and $y$ the \emph{branch} vertices of $T$.  
A \emph{shortcut} of $T$ is path $P$ linking any two of $\{Q_1, Q_2, Q_3\}$ and avoiding the third, such that neither endpoint of $P$ is a branch vertex (Figure \ref{fig:Odd_theta_with_shortcut}).  

\begin{lem} \label{lem:shortcut_an_odd_theta_U_24}
If a biased graph $\Om$ contains an countrabalanced theta with a shortcut, then $F(\Om)$ contains a $U_{2,4}$ minor.  
\end{lem}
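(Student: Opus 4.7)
The plan is to realise $U_{2,4}$ as a minor of $F(\Om)$ on a carefully chosen set of four edges. Write the contrabalanced theta as $T = Q_1 \cup Q_2 \cup Q_3$ with branch vertices $x, y$, and suppose the shortcut $P$ meets $Q_1$ at an internal vertex $a$ and $Q_2$ at an internal vertex $b$, avoiding $Q_3$; by passing to a minimal shortcut I may assume $P$ is internally disjoint from $T$. Split $Q_1 = Q_1^x \cup Q_1^y$ at $a$ (with $x \in Q_1^x$) and $Q_2 = Q_2^x \cup Q_2^y$ at $b$.

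The first step is a theta-property argument: the three paths $P$, $Q_1^x \cup Q_2^x$, and $Q_1^y \cup Q_2^y$ are pairwise internally disjoint and meet only at the endpoints $a, b$, so they form a theta at branch vertices $a, b$. Its three cycles are $D_1 = P \cup Q_1^x \cup Q_2^x$, $D_2 = P \cup Q_1^y \cup Q_2^y$, and $Q_1 \cup Q_2$. Since $Q_1 \cup Q_2$ is a cycle of the contrabalanced theta $T$, it is unbalanced, so by the theta property at most one of $D_1, D_2$ is balanced. After relabelling I may assume $D_2$ is unbalanced.

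The minor I would exhibit is then constructed as follows. Pick a single edge $q_i$ from each of $Q_1^x, Q_2^x, Q_3$ and a single edge $p$ from $P$; let $X$ be the union of $E(Q_1^y) \cup E(Q_2^y)$ with all internal edges of $Q_1^x, Q_2^x, Q_3, P$; and let $Y = E(\Om) \setminus (E(T) \cup E(P))$. After contracting $X$ and deleting $Y$, the underlying graph collapses to two vertices $x$ and $w$, where $w$ is the common identification of $a, b, y$ and the contracted internal vertices. In this graph $q_1, q_2, q_3$ are three parallel links between $x$ and $w$, while $p$ is a loop at $w$. The loop $p$ closes through the contracted $a$-to-$b$ path $Q_1^y \cup Q_2^y$ to the cycle $D_2$ of $\Om$, so $p$ is unbalanced; and each two-cycle $\{q_i, q_j\}$ closes through the appropriate subpath of $X$ to the cycle $Q_i \cup Q_j$, which is one of the unbalanced cycles of $T$.

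Every cycle of the resulting four-edge biased graph is therefore unbalanced, so there are no circuits of size one or two. On the other hand, each of the three tight handcuffs $\{p, q_i, q_j\}$ is a circuit (the unbalanced loop $p$ and an unbalanced two-cycle $\{q_i, q_j\}$ meet only at $w$), and the three parallel links form a contrabalanced theta on $\{x, w\}$ so that $\{q_1, q_2, q_3\}$ is also a circuit. These four size-three circuits on a four-element rank-two matroid exhibit $U_{2,4}$. The only delicate point is the bookkeeping of cycle biases under the multi-edge contraction; the decisive structural input is the theta-property observation in the first step, which guarantees that $p$ survives as an unbalanced loop in the contracted biased graph.
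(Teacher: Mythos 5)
Your proof is correct and follows the same route as the paper's: apply the theta property to the theta formed by the shortcut $P$ and the two halves of $Q_1\cup Q_2$ to find that $P$ closes to an unbalanced cycle on one side, then contract the half-paths on that side so that $P$ becomes an unbalanced loop and the remaining three path-fragments become three pairwise-unbalanced parallel links, giving $U_{2,4}$. You have simply made explicit the deletion/contraction bookkeeping that the paper leaves to a figure and a one-line remark.
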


\begin{proof} 
Consider the theta subgraph and shortcut $P$ shown in Figure \ref{fig:Odd_theta_with_shortcut}.  
\begin{figure}[tbp]
\begin{center}
\includegraphics[scale=0.9]{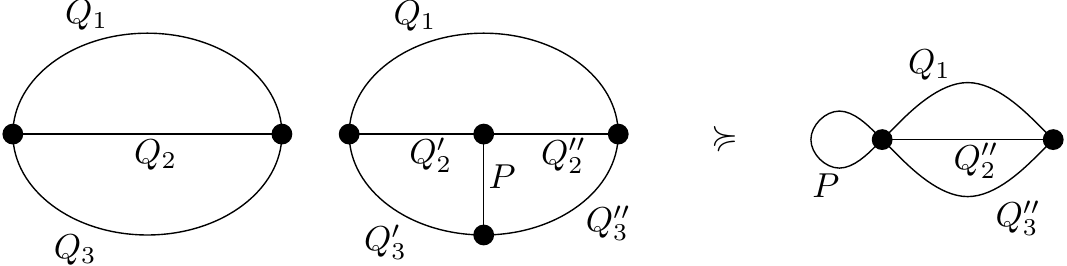}
\caption{An countrabalanced theta with a shortcut has a $U_{2,4}$ minor.}
\label{fig:Odd_theta_with_shortcut}
\end{center}
\end{figure}
By the theta property, one of $Q_2' P Q_3'$ or $Q_2'' P Q_3''$ is unbalanced, say \wolog{} 
$Q_2' P Q_3'$ unbalanced.  
Contracting $Q_2'$ and $Q_3'$ yields a biased graph representing $U_{2,4}$.  
\end{proof} 

We can immediately generalise Lemma \ref{lem:shortcut_an_odd_theta_U_24}.  

\begin{lem} \label{lem:odd_theta_and_unbal_cycle_U_24}
Suppose $M=F(\Om)$ and $\Om$ is connected.  
If $\Om$ contains a contrabalanced theta and an unbalanced cycle avoiding one of its branch vertices, then $M$ is nonbinary.  
\end{lem}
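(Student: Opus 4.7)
The plan is to reduce to Lemma~\ref{lem:shortcut_an_odd_theta_U_24} when possible, and otherwise to construct a $U_{2,4}$ minor of $F(\Om)$ directly. Write $T = Q_1 \cup Q_2 \cup Q_3$ for the contrabalanced theta with branch vertices $x, y$, and let $C$ be the unbalanced cycle avoiding $y$. I work in a suitable minor of $\Om$; contracting a forest preserves the biases of the remaining cycles.

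My first goal is to reduce to the case $V(C) \cap V(T) = \{v\}$ for a single vertex $v \in V(T)$. If $V(C) \cap V(T) = \emptyset$, the connectivity of $\Om$ furnishes a path $P$ from $V(C)$ to $V(T)$ that is internally disjoint from $C \cup T$; contracting $P$ attaches $C$ to $T$ at a single vertex. If instead $|V(C) \cap V(T)| \geq 2$, I pick $a, b \in V(C) \cap V(T)$ consecutive along $C$, and let $A$ denote the arc of $C$ between them (whose internal vertices lie outside $V(T)$). If $a$ and $b$ lie on different paths $Q_i, Q_j$ with neither being a branch vertex, then $A$ is a shortcut of $T$ and Lemma~\ref{lem:shortcut_an_odd_theta_U_24} finishes the proof. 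Otherwise, there is a subpath $R$ of $T$ from $a$ to $b$ avoiding $y$ (a subpath of a single $Q_i$, possibly extended through $x$); applying the theta property to an appropriately chosen theta involving $A$ and $R$ yields an unbalanced cycle avoiding $y$, and contracting $R$ produces a minor containing a (still contrabalanced) theta and an unbalanced cycle meeting it in strictly fewer vertices. Iterating drives $|V(C) \cap V(T)|$ down to $1$.

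Once $V(C) \cap V(T) = \{v\}$, I contract every edge of $C$ except one edge incident to $v$; since $C$ is unbalanced, the remaining edge becomes an unbalanced loop $\ell$ at $v$. If $v$ is internal to some $Q_i$, I contract the subpath of $Q_i$ from $x$ to $v$, so $\ell$ becomes incident to the branch vertex $x$ of the still-contrabalanced theta. Finally, contracting each of $Q_1, Q_2, Q_3$ down to a single edge yields a biased graph on two vertices $x, y$ with three parallel edges $q_1, q_2, q_3$ forming a contrabalanced theta together with an unbalanced loop $\ell$ at $x$. A direct rank-and-circuit verification shows this four-element biased graph represents $U_{2,4}$: it has rank $2$; every pair is independent; and each $3$-subset is a circuit, being either the contrabalanced theta $\{q_1, q_2, q_3\}$ or a tight handcuff $\{q_i, q_j, \ell\}$.

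The main obstacle is executing the iterative reduction of $|V(C) \cap V(T)|$ cleanly: when the consecutive intersection vertices $a, b$ both lie on a single $Q_i$, or when one of them is the branch vertex $x$, the arc $A$ is not itself a shortcut, and the theta property must be applied with a carefully chosen path $R \subseteq T$ to ensure that contracting $R$ genuinely decreases the intersection size. A secondary subtlety is that if $y$ happens to be a cut vertex separating $C$ from the rest of $T$, the connecting path $P$ in the first reduction is forced to end at $y$ and $\ell$ ends up at $y$ rather than at $x$; the final $U_{2,4}$ verification is symmetric in $x$ and $y$ and remains valid in that case.
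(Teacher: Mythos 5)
Your overall architecture (dispose of shortcuts via Lemma~\ref{lem:shortcut_an_odd_theta_U_24}, then contract down to the two-vertex configuration consisting of a contrabalanced triple edge plus an unbalanced loop, which you correctly verify to be $U_{2,4}$) is sound, and your endgame matches the paper's. But the inductive step you use to drive $|V(C)\cap V(T)|$ down to $1$ has a genuine gap, precisely at the point you flag as ``the main obstacle.'' With $a,b$ chosen consecutive along $C$ and $R\subseteq Q_i$ the connecting subpath of $T$, the theta property gives you an unbalanced cycle only in the wrong place when $A\cup R$ is balanced: the natural theta with branch vertices $a,b$ has third branch $P_3$ running through $x$, $Q_j$ and $y$, its cycle $R\cup P_3=Q_i\cup Q_j$ is unbalanced, so the theta property forces $A\cup P_3$ to be unbalanced --- but that cycle \emph{contains} $y$, so it is useless for the induction. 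The alternative of rerouting $C$ along the balanced cycle $A\cup R$ (Lemma~\ref{lem:rerouting_along_a_bal_cycle}) is not available either, because $R$ need not be internally disjoint from $C$: other vertices of $V(C)\cap V(Q_i)$ may lie in the interior of $R$, since $a,b$ were chosen consecutive along $C$, not along $Q_i$. Simply contracting $R$ in that situation fragments $C$ into several cycles through the contracted vertex, and in a general biased graph there is no additivity of bias that would let you conclude one of the fragments is unbalanced. So the step ``contracting $R$ produces \dots an unbalanced cycle meeting [the theta] in strictly fewer vertices'' is not justified as written.

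The fix is to choose $a,b$ consecutive along $Q_i$ among $V(C)\cap V(Q_i)$ instead (after first observing, as the paper does, that in the absence of a shortcut all of $V(C)\cap V(T)$ lies on a single $Q_i$ together with possibly $x$ --- otherwise some arc of $C$ between consecutive intersection vertices on distinct branches would itself be a shortcut). Then $R$ is internally disjoint from $C$, so $C\cup R$ is a theta whose three cycles are $C$, $A_1\cup R$, $A_2\cup R$ with $C$ unbalanced; the theta property forces some $A_j\cup R$ to be unbalanced, it avoids $y$, and contracting $R$ strictly decreases the intersection. With that repair your induction goes through. Note that the paper avoids the iteration entirely: the observation that $C$ meets internal vertices of at most one branch means one can immediately contract each $Q_i$ to a single edge $e_i$ and dispatch the three resulting small configurations (disjoint $C$ joined by a path, $C$ surviving as one cycle, or $C$ fragmenting into several cycles, in which case the superfluous cycles are deleted rather than argued to be balanced or unbalanced) directly. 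That route is shorter and sidesteps the bias-additivity pitfall altogether.
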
 

\begin{proof} 
Let $Q_1, Q_2, Q_3$ be three internally disjoint $u$-$v$ paths forming a contrabalanced theta $T$, and let $C$ be an unbalanced cycle avoiding branch vertex $u$ of $T$.  
If there is a subpath of $C$ forming a shortcut of $T$, then by Lemma \ref{lem:shortcut_an_odd_theta_U_24}, $M$ has a $U_{2,4}$ minor.  
Otherwise, $C$ meets an internal vertex of at most one of $Q_1$, $Q_2$, or $Q_3$.  
Let $P$ be a $C$-$T$ path ($P$ is trivial if $C$ meets $T$).  
For $i \in \{1,2,3\}$, let $e_i \in Q_i$ be the edge in $T$ incident with $u$, and let $e_4$ be an edge in $C$ that is not in $T$.  
Contract all edges in $Q_1$, $Q_2$, and $Q_3$ except $e_1$, $e_2$, and $e_3$.  
Depending upon how $C$ meets $T$, we now have one of the biased graphs shown in Figure \ref{fig:odd_theta_and_unbal_cycle_U_24}.  
\begin{figure}[tbp] 
\begin{center} 
\includegraphics[scale=.9]{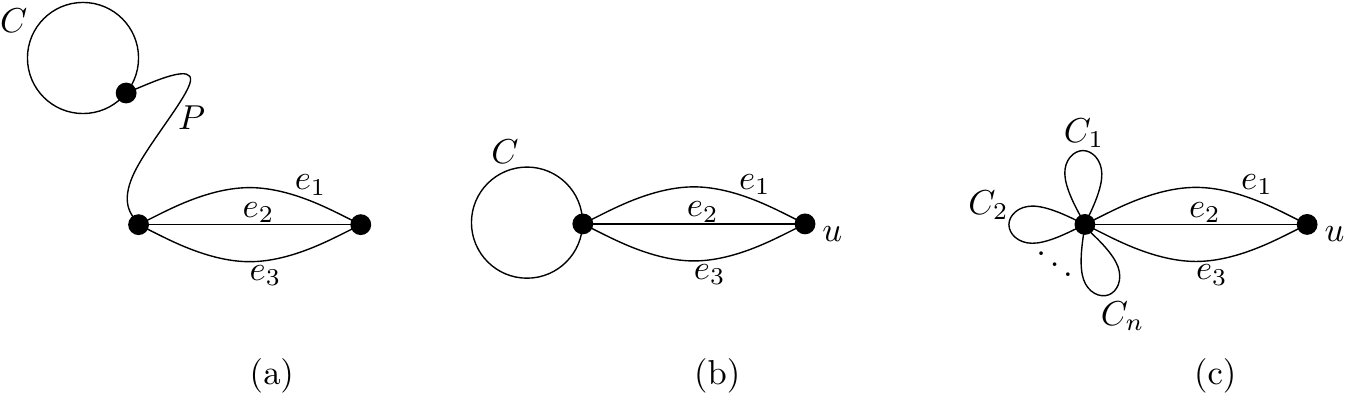}
\end{center} 
\caption[Finding $U_{2,4}$ (iii)]{If $\Om$ contains an odd theta and an unbalanced cycle avoiding one of its branch vertices.}
\label{fig:odd_theta_and_unbal_cycle_U_24} 
\end{figure} 
If $C$ is disjoint from $T$ (Figure \ref{fig:odd_theta_and_unbal_cycle_U_24}a) contract all edges of $P$ and all edges but $e_4$ remaining in $C$ to obtain a biased graph representing $U_{2,4}$.  
If $P$ is trivial, and after contracting all edges in $Q_1$, $Q_2$, and $Q_3$ excepting $e_1$, $e_2$, and $e_3$, the remaining edges in $C$ form a single cycle (Figure \ref{fig:odd_theta_and_unbal_cycle_U_24}b) then contract all edges but $e_4$ remaining in $C$ to obtain a biased graph representing $U_{2,4}$.  
If $P$ is trivial and after contracting the edges of $Q_1$, $Q_2$, and $Q_3$ excepting $e_1$, $e_2$, and $e_3$, the remaining edges of $C$ form more than one cycle (Figure \ref{fig:odd_theta_and_unbal_cycle_U_24}c) then contract all edges but $e_4$ in the cycle containing $e_4$ and delete all edges of $C$ left in the remaining cycles.  This again yields a biased graph representing $U_{2,4}$.  
\end{proof}

\subsection{$H$-reduction and $H$-enlargement.} \label{sec:representations} 

The following two lemmas are the keys to Theorem \ref{mainthm:main_bal_vertex_rep}.  

\begin{lem} \label{lem:possible_committed_lobe_reps}
Let $\Om$ be a biased graph with $F(\Om)$ 3-connected. 
Suppose $(X,Y)$ is a partition of $E(\Om)$ with $V(X) \cap V(Y) = \{u,v,w\}$, and suppose the biased subgraph $H$ of $\Om$ induced by $X$ is balanced, $V(H) \setminus \{u,v,w\} \not= \emptyset$, and that every vertex $x \in V(H) \setminus \{u,v,w\}$ is committed.  
Let $\Om'$ be a biased graph with $F(\Om') = F(\Om)$.  
Then the biased subgraph $H' \subseteq \Om'$ induced by $X$ is either 
\begin{enumerate} 
\item  balanced and isomorphic to $H$, 
\item  obtained from $H$ by pinching two vertices in $\{u,v,w\}$, or
\item  obtained from $H$ by rolling up all edges in $H$ incident to exactly one of $u$, $v$, or $w$.  
\end{enumerate}  
\end{lem}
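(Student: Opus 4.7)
The strategy is to use the rigidity forced by committed vertices to pin down most of $H' = \Om'[X]$, and then to classify by case analysis the small number of configurations allowed at the boundary $\{u,v,w\}$.

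Because $H$ is balanced, $F(H) = M(H)$ is graphic, and since $F(\Om') = F(\Om)$ we have $F(H') = F(\Om')|X = F(\Om)|X = M(H)$; thus $H'$ is a biased graph representation of the graphic matroid $M(H)$. For each committed vertex $x \in V(H) \setminus \{u,v,w\}$, Proposition \ref{prop:comp_cocircuit_connnonbinhyperplane} supplies a unique vertex $x' \in V(\Om')$ with $\delta_{\Om'}(x')^+ = \delta_{\Om}(x)^+$; since $\delta_{\Om}(x)^+ \subseteq X$, this $x'$ lies in $V(H')$. Identifying $x = x'$, every edge of $X$ with an endpoint in $V(H) \setminus \{u,v,w\}$ retains that endpoint in $H'$. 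Hence the only incidences that can vary lie at the auxiliary vertex set $U' := V(H') \setminus (V(H) \setminus \{u,v,w\})$, and only edges having an endpoint in $\{u,v,w\}$ in $H$ may be moved.

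Using the connectivity formula (\ref{eqn:matroid_graph_conn}), the 3-connectedness of $F(\Om)$, and the fact that $H$ is balanced, I would show $|U'| \le 3$ and then analyse the three cases. When $|U'| = 3$, the boundary vertices of $H'$ are in bijection with $\{u,v,w\}$; since the interior incidences are pinned and $F(H') = M(H)$, a Whitney 2-isomorphism argument forces $H'$ to be balanced and graph-isomorphic to $H$, giving option (1). When $|U'| = 2$, either two of $\{u,v,w\}$ have been fused to a single vertex of $U'$, and applying Proposition \ref{prop:If_no_odd_theta} to $H'$ together with the pinned interior yields that $H'$ is exactly the pinch of option (2); or one of $\{u,v,w\}$ has disappeared from $V(H')$ because all of its $H$-edges have become unbalanced loops at their other endpoints, yielding the roll-up of option (3). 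The degenerate case $|U'| \le 1$ is ruled out using 3-connectedness of $F(\Om)$ and the hypothesis that $V(H) \setminus \{u,v,w\}$ is nonempty.

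\textbf{Main obstacle.} The main difficulty is excluding mixed boundary configurations — for instance, a partial pinch combined with a partial roll-up, or a rearrangement that produces an unbalanced cycle in $H'$ not corresponding to any circuit of $M(H)$. These are ruled out using the rank formula $r(X) = |V(X)| - b(X)$ together with the theta property, and using committedness of interior vertices adjacent to $\{u,v,w\}$: if $y$ is a committed neighbour of $u$ in $H$, its incident edge set is pinned in $H'$, so any illicit rearrangement of a $yu$-edge would either contradict commitment of $y$ or produce a cycle of the wrong bias. The disconnected case of $H$ is handled componentwise, using that $F(\Om)$ is $3$-connected.
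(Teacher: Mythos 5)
Your overall strategy is the paper's: use committedness to pin every interior incidence of $H$, then classify what can happen at the boundary $\{u,v,w\}$. The difference is one of logical order, and it matters. You propose first to bound $|U'|\le 3$ via the connectivity formula and then to case-analyse on $|U'|$, deferring the exclusion of ``mixed boundary configurations'' to a final obstacle. But that exclusion is not a cleanup step --- it is essentially the entire content of the lemma, and the bound on $|U'|$ and your clean trichotomy cannot be obtained before it: a priori the edges of $\delta(u)\cap X$ could be distributed over several new vertices, or some could become unbalanced loops while others do not, and the rank formula alone does not forbid this. The paper does the coherence step first: decompose $H\setminus\{u,v,w\}$ into components $H_1,\dots,H_k$ (each of which, by 3-connectedness of $F(\Om)$, sees all of $u,v,w$); for each $i$ and each boundary vertex, any two edges from that vertex into $H_i$ lie on a balanced cycle of $H$, hence a circuit of $F(\Om)$ all of whose other vertices are committed, which forces the two edges in $\Om'$ either to share their free endpoint or both to be unbalanced loops. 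Only after that do short independence arguments (paths in $H_i$ whose edge sets are independent in $F(\Om)$ but would become dependent in $\Om'$) rule out $u'=v'=w'$, two simultaneous roll-ups, and inconsistent choices between different components, leaving exactly the seven local pictures that assemble into outcomes (1)--(3). Your sketch names the right tools (circuits of the wrong bias, pinned committed neighbours), so the plan is salvageable, but as written the central step is asserted rather than proved; in particular the Whitney 2-isomorphism appeal in your $|U'|=3$ case already presupposes that all of $\delta(u)\cap X$ attaches to a single new vertex, which is precisely the unproved coherence claim.
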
 

\begin{proof} 
Let the connected components of $H \setminus \{u,v,w\}$ be $H_1, \ldots, H_k$.  
Let $U_i, V_i, W_i$ be the set of neighbours of $u, v, w$, respectively, in $H_i$, for $i \in \{1, \ldots, k\}$.  
Since $F(\Om)$ is 3-connected and $H$ is balanced, 
each component $H_i$ contains a vertex in each of $U_i$, $V_i$ and $W_i$ (else $(E(H_i), E(\Om) \setminus E(H_i))$ would be a 1- or 2-separation of $F(\Om)$).  
Let $A = E(H) \cap \delta(u)$, and let $A_i$ be the set of edges in $A$ whose second endpoint is in $H_i$.  
We first show that for each $i \in \{1,\ldots, k\}$, every edge in $A_i$ is in $\Om'$ either incident to a common vertex or is an unbalanced loop.  
If $|A_i|=1$, the claim holds, so consider two edges $e, f$ in a set $A_i$.  
There is a path in $H_i$ linking the endpoints of $e$ and $f$ in $U_i$.  
This path together with $e,f,$ and $u$ is a balanced cycle $D$ in $\Om$, 
so $E(D)$ is a circuit in $F(\Om)$.  
Since every vertex in $D-u$ is committed, this implies that in $\Om'$ either both $e$ and $f$ are incident to a common vertex or are both unbalanced loops.  
Similarly, define $B$ to be the set of edges in $E(H) \cap \delta(v)$ and $C = E(H) \cap \delta(w)$, and define $B_i$ (resp.\ $C_i$) to be the set of edges in $B$ (resp.\ $C$) whose second endpoint is in $H_i$.  
The analogous argument shows that in $\Om'$, for each $i \in \{1, \ldots, k\}$, either all edges in $B_i$ (resp.\ $C_i$) are incident to a common vertex or are all unbalanced loops.  

Now for each $i \in \{1, \ldots, k\}$, let $H_i'$ be the biased subgraph of $\Om'$ induced by the elements of $F(\Om)$ in $H_i \cup A_i \cup B_i \cup C_i$.  
Since every vertex $x \in V(H) \setminus \{u,v,w\}$ is committed, for each   vertex $x \in V(H_i)$ there is a unique vertex $x' \in V(H_i')$ with $\delta(x')=\delta(x)$.  
Let $U_i', V_i', W_i'$ be the sets of vertices $x'$ of $H'$ whose corresponding vertices $x$ of $H$ are in $U_i, V_i, W_i$, respectively.  
Suppose first that none of $A_i, B_i$, or $C_i$ consist of unbalanced loops in $\Om'$: each edge in $A_i$ has an endpoint in $U_i'$ and a common second endpoint $u'$, each edge $B_i$ has an endpoint in $V_i'$ and a common second endpoint $v'$, and each edge in $C_i$ has an endpoint in $W_i'$ and a common second endpoint $w'$.  
Now it may be that in $\Om'$ all three of $u', v', w'$ are distinct, or that some two of $v', u', w'$ are the same vertex.  
It cannot be that $u'=v'=w'$:  if so, let $P$ be a $u$-$v$ path and $Q$ be a $P$-$w$ path in $H$; then $E(P \cup Q)$ is independent in $F(\Om)$ but would be dependent in $F(\Om')$, a contradiction.  

We now claim that at most one of $A_i$, $B_i$, or $C_i$ are unbalanced loops in $\Om'$.  
For suppose to the contrary that the edges representing the elements in both $A_i$ and $B_i$ are unbalanced loops in $\Om'$.  
There is a $u$-$v$ path $P$ in $H_i$; $E(P)$ is independent in $F(\Om)$, but a circuit in $F(\Om')$, a contradiction.  
Similarly, not both $A_i$ and $C_i$, nor both $B_i$ and $C_i$, may be unbalanced loops.  

Now suppose that in $\Om'$ the edges in $A_i$ are unbalanced loops, the edges in $B_i$ are incident to a common vertex $v'$, and the edges in $C_i$ are incident to a common vertex $w'$.  
We claim that $v' \not= w'$.  
For supposing $v' = w'$, then, as in the previous paragraph, choosing a $u$-$v$ path $P$ and a $P$-$w$ path $Q$ in $H_i$ yields a set $E(P \cup Q)$ independent in $F(\Om)$ but dependent in $F(\Om')$.  
Similarly, if a set $B_i$ (resp.\ $C_i$) consists of unbalanced loops in $\Om'$, then the common endpoint $u'$ of the edges in $A_i$ and the common endpoint $w'$ of the edges in $C_i$ (resp.\ $v'$ of edges in $B_i$) are distinct in $\Om'$.  

Hence each biased subgraph $H_i'$ has the form of one of the biased graphs (a)-(g) shown in Figure \ref{fig:The_H_is}.  
\begin{figure}[htbp] 
\begin{center} 
\includegraphics[scale=0.9]{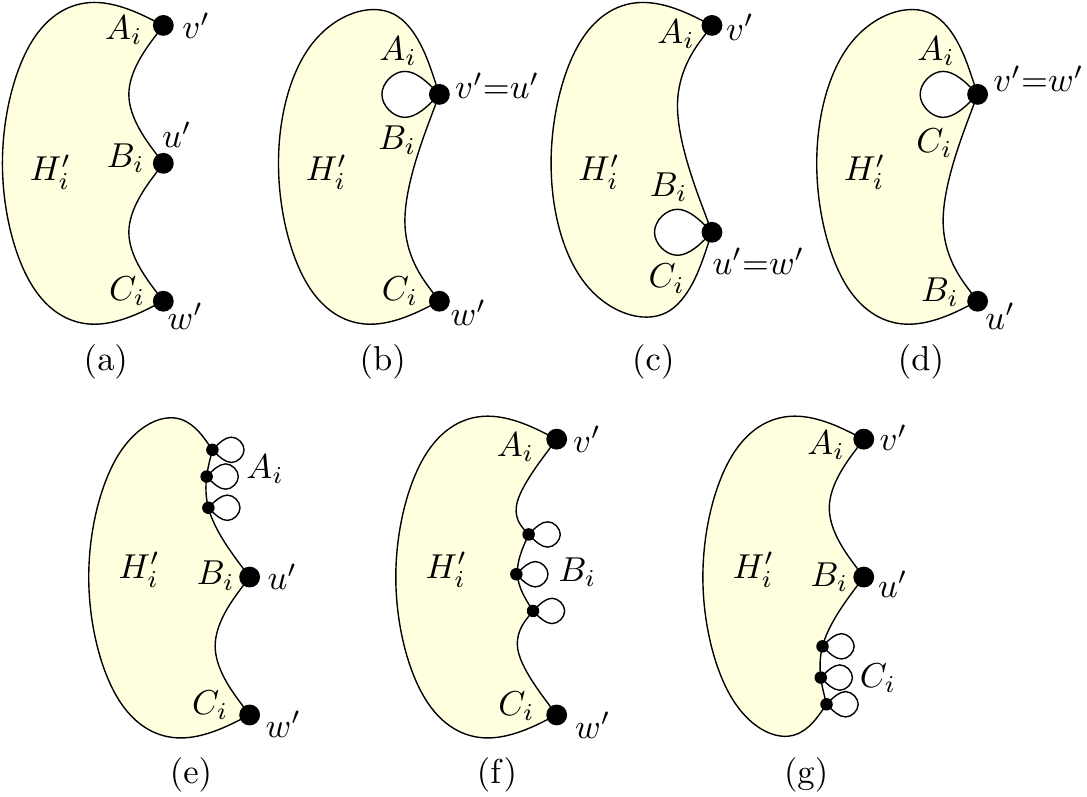}
\end{center} 
\caption[Possible representations of large balanced or pinched subgraphs]{Possible biased graph representations of $F(H)$ when $H$ is a balanced or pinched biased subgraph all of whose vertices aside from $v', u', w'$ are committed.}
\label{fig:The_H_is} 
\end{figure} 
It is now easy to see that if for some $i \not= j$, $H_i'$ and $H_j'$ are not both of the same form (a)-(g), then $F(\Om') \not= F(\Om)$: we would have a balanced cycle in $H$ the elements of which would form an independent set in $F(\Om')$.  
Hence $\bigcup_i H_i'$ itself has the form of one of these biased graphs.  
It is also now easy to see that edges in $H$ that link pairs of vertices in $\{u,v,w\}$ must be placed in $\Om'$ in the same form as the $H_i'$. 
For instance, if the $H_i'$ are of the form shown in Figure \ref{fig:The_H_is}(b), then a balanced triangle on $u,v,w$ in $H$ must be a pinched triangle on the vertices $\{v' \eq u', w'\}$ with the $vu$ edge in $\Om$ now an unbalanced loop incident to $v' \eq u'$.  
The conclusion now follows:  
If $H'$ is of the form shown in Figure \ref{fig:The_H_is}(a), then $H'$ is balanced and isomorphic to $H$.  
If $H'$ is of the form (b)-(d), then $H'$ is obtained from $H$ by pinching two of $\{u,v,w\}$, and if $H'$ is one of (e)-(g), then $H'$ is obtained from $H$ as a roll-up of the edges of $H$ incident to one of $u$, $v$, or $w$.  
\end{proof} 

\begin{lem} \label{lem:possible_committed_lobe_reps_pinched}
Let $\Om$ be a biased graph with $F(\Om)$ 3-connected. 
Suppose $(X,Y)$ is a partition of $E(\Om)$ with $V(X) \cap V(Y) = \{u,v\}$, and suppose that the biased subgraph $H$ of $\Om$ induced by $X$ is a pinch with signature $\{\Sigma_1, \Sigma_2\} \subseteq \delta(u)$, that $V(H) \setminus \{u,v\} \not= \emptyset$, and that every vertex $x \in V(H) \setminus \{u,v\}$ is committed.  
Let $H''$ be the graph obtained by splitting $u$, with $\delta(u_1) \cup \delta(u_2) = \delta(u)$.  
Let $\Om'$ be a biased graph with $F(\Om') = F(\Om)$.  
Then the biased subgraph $H' \subseteq \Om'$ induced by $X$ is either 
\begin{enumerate} 
\item  balanced and isomorphic to $H''$, 
\item  obtained from $H''$ by pinching two vertices in $\{u_1, u_2, v\}$, or
\item  obtained from $H''$ by rolling up all edges in $H''$ incident to exactly one of $u_1$, $u_2$, or $v$.  
\end{enumerate}  
\end{lem}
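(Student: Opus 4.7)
The plan is to reduce this to the three-boundary-vertex setting of Lemma \ref{lem:possible_committed_lobe_reps} by passing from $H$ to its split $H''$, which plays the role of the balanced subgraph and whose three vertices $\{u_1,u_2,v\}$ play the role of $\{u,v,w\}$. By Proposition \ref{prop:vertex_splitting_operation} we have $M(H'')=F(H)$, and since $F(\Om')=F(\Om)$ restricts to the same matroid on $X$, we get $F(H')=M(H'')$. So the task is to show that each biased graph representing the graphic matroid $M(H'')$ whose non-boundary vertices are forced (by commitment in $F(\Om)$) to coincide with those of $H''$ must take one of the three listed shapes.

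The main work is to mirror, step by step, the argument of Lemma \ref{lem:possible_committed_lobe_reps}. First I would decompose $H'' - \{u_1,u_2,v\}$ (equivalently $H-\{u,v\}$) into connected components $H_1,\dots,H_k$, and for each $i$ define $A_i,B_i,C_i$ as the edges of $H''$ joining $H_i$ to $u_1,u_2,v$ respectively. In terms of $\Om$ these are the edges of $H_i$ to $u$ in $\Sigma_1$, to $u$ in $\Sigma_2$, and to $v$. Next I would show that each of $A_i,B_i,C_i$ is nonempty: otherwise, say $A_i=\emptyset$, the subgraph $\Om[E(H_i)\cup B_i\cup C_i]$ contains no cycle mixing $\Sigma_1$ and $\Sigma_2$, so is balanced and connected, and together with the fact that $\Om$ is unbalanced this gives via Lemma \ref{lem:balancedsideof2sepa2sep} a 2-separation of $F(\Om)$, contradicting 3-connectedness. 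The argument for the case $C_i=\emptyset$ is similar using $\Om[E(H_i)\cup A_i\cup B_i]$ as a 1-side in $\Om$.

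With $A_i,B_i,C_i$ all nonempty, I would then import the key local step from Lemma \ref{lem:possible_committed_lobe_reps}: any two edges $e,f\in A_i$ can be joined by a path in $H_i$ to form a cycle through $u$ whose two edges at $u$ lie in the same unbalancing class $\Sigma_1$, hence balanced in $H$, hence a circuit of $F(\Om)$ all of whose vertices other than $u$ are committed. This forces in $\Om'$ the two edges $e,f$ to share a common endpoint $u_1'$ or both become unbalanced loops. The analogous statements for $B_i$ (with common endpoint $u_2'$) and $C_i$ (with common endpoint $v'$) follow in the same way. From here the analysis of Figure \ref{fig:The_H_is}, again as in Lemma \ref{lem:possible_committed_lobe_reps}, shows each $H_i'$ must fall into one of the seven shapes (a)--(g), consistency across $i$ forces a single shape, and edges of $H''$ between pairs of boundary vertices are then forced accordingly; reading off the possibilities yields the three cases (1)--(3) exactly as in $H''$ pinched or rolled up at one of $\{u_1,u_2,v\}$.

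The main obstacle I anticipate is checking that the 3-connectedness of $F(\Om)$ still supplies the needed structural constraints even though $\Om$ itself only has the two-vertex boundary $\{u,v\}$. In Lemma \ref{lem:possible_committed_lobe_reps} the nonemptiness of $U_i,V_i,W_i$ is immediate from 3-connectivity of $F(\Om)$; here the corresponding statements for $A_i,B_i,C_i$ must be deduced from $F(\Om)$-separations produced by the unbalancing-class structure on $\delta(u)$, as sketched above. Once that is in hand, the rest of the proof is a faithful translation of the previous lemma's argument, with $\{u_1,u_2,v\}$ in the role of $\{u,v,w\}$ and ``balanced cycle in $H$ through $u$ with both incident edges in $\Sigma_j$'' in the role of ``balanced cycle in $H$ through $u$'' or ``through $v$''.
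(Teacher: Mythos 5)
Your proposal is correct and follows exactly the route the paper takes: the paper's proof consists precisely of invoking Proposition \ref{prop:vertex_splitting_operation} to get $F(H'')=F(H)$ and then running the proof of Lemma \ref{lem:possible_committed_lobe_reps} verbatim with $H''$ in place of $H$ and $u_1,u_2,v$ in place of $u,v,w$. The extra detail you supply (nonemptiness of $A_i,B_i,C_i$ via Lemma \ref{lem:balancedsideof2sepa2sep}, and the observation that a cycle through $u$ with both incident edges in the same $\Sigma_j$ is balanced) is exactly the correct translation of the corresponding steps of that proof.
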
 

\begin{proof} 
By Proposition \ref{prop:vertex_splitting_operation}, $F(H'') = F(H)$.  
The proof is that of Lemma \ref{lem:possible_committed_lobe_reps}, with $H''$ taking the place of $H$ and $u_1, u_2, v$ taking the place of $u, v, w$, respectively.  
\end{proof}

Let $\Om$ be a biased graph with a balancing vertex $u$, with $F(\Om)$ 3-connected.  
Let $(X,Y)$ be a partition of $E(\Om)$, let $S = V(X) \cap V(Y)$, and let $H$ be the biased subgraph of $\Om$ induced by $X$.   
Suppose that $V(H) \setminus S \not= \emptyset$, that every vertex $x \in V(H) \setminus S$ is committed, and that one of the following holds: 
\begin{enumerate} 
\item  $S = \{u, v, w\}$ for some $v, w \in V(\Om)$, and $H$ is balanced, or 
\item  $S = \{u,v\}$ for some $v \in V(\Om)$ and $H$ is a pinch with signature $\Sigma \subseteq \delta(u)$.  
\end{enumerate} 
An \emph{$H$-reduction} is one of the following operations.  
In case (1), replace $H$ in $\Om$ with a balanced triangle on $\{u,v,w\}$.  
In case (2), replace $H$ in $\Om$ with pinched triangle consisting of two $u$-$v$ edges and an unbalanced loop on $u$.  
Likewise, if $H_1, \ldots, H_k$ are pairwise edge disjoint biased subgraphs of $\Om$ each satisfying the conditions for an $H_i$-reduction, then we write $H = \{H_1, \ldots, H_k\}$, perform an $H_i$ reduction for each $i \in \{1, \ldots, k\}$, and call the resulting biased graph an \emph{$H$-reduction}.  
We call each such balanced or pinched subgraph $H_i$ a \emph{lobe} of $\Om$.  
An $H$-reduction of $\Om$ is denoted $\re(\Om,H)$.  

Suppose $\re(\Om,H)$ is obtained via replacement of lobes $H_1, \ldots, H_k$, and $\Psi$ is a biased graph with $F(\Psi) = F(\re(\Om,H))$. 
A biased graph $\Om'$ with $F(\Om') = F(\Om)$ may be obtained from $\Psi$ as follows.  
For $i \in \{1, \ldots, k\}$, let $C_i$ be the 3-circuit of $F(\re(\Om,H))$ that replaced lobe $H_i$ in $\Om$.  
If $C_i$ is a balanced triangle, a pinched triangle, or a rolled-up triangle in $\Psi$, then replace $C_i$ in $\Psi$ with a biased subgraph $H_i'$ of one of the three forms given by Lemma \ref{lem:possible_committed_lobe_reps} or \ref{lem:possible_committed_lobe_reps_pinched}: 
\begin{enumerate} 
\item  If $C_i$ is a balanced triangle in $\Psi$, replace $C_i$ by a balanced biased subgraph $H_i'$, where $H_i'$ is a copy of the balanced subgraph $H_i$ or, in the case $H_i$ is a pinch, a copy of the graph $H_i''$ obtained by splitting $u$.  
\item  If $C_i$ a pinched triangle, replace $C_i$ with a biased graph $H_i'$ obtained from $H_i$ or $H_i''$ by pinching two of its vertices in $\{u,v,w\}$ or $\{u_1, u_2, v\}$, respectively.  
\item  If $C_i$ is a rolled-up triangle, replace $C_i$ with a biased graph $H_i'$ obtained from $H_i$ or $H_i''$ by a roll-up of edges incident to a vertex in $\{u,v,w\}$ or $\{u_1, u_2, v\}$, respectively.  
\end{enumerate} 
In each case, the replacement is done by deleting $E(C_i)$ from $\Psi$ and identifying each vertex of $\Psi$ previously incident to an edge in $C_i$ with a vertex of $H_i'$ appropriately.  
Which pairs of vertices to identify are chosen as follows.  
Suppose 3-circuit $abc$ in $\Psi$ is to be replaced by a biased graph $H'$ of one of the forms given by Lemma \ref{lem:possible_committed_lobe_reps} or \ref{lem:possible_committed_lobe_reps_pinched}.  
As in the proofs of Lemma \ref{lem:possible_committed_lobe_reps} and \ref{lem:possible_committed_lobe_reps_pinched}, let $A=\delta(u) \cap E(H)$, $B=\delta(v) \cap E(H)$, and $C=\delta(w) \cap E(H)$ if $H$ is balanced in $\Om$, or if $H$ is a pinch in $\Om$, let $A=\delta(u_1) \cap E(H'')$, $B=\delta(u_2) \cap E(H'')$, and $C=\delta(v) \cap E(H'')$, where $H''$ is obtained by splitting vertex $u$ and $u_1$, $u_2$ are the resulting new vertices of $H''$.  
Let 
\[ v_A = \begin{cases} u &\text{if } H \text{ is balanced} \\ u_1 &\text{if } H \text{ is a pinch,} \end{cases} \ \ \ 
v_B = \begin{cases} v &\text{if } H \text{ is balanced} \\ u_2 &\text{if } H \text{ is a pinch,} \end{cases} \ \ \ \] and \[
v_C = \begin{cases} w &\text{if } H \text{ is balanced} \\ v &\text{if } H \text{ is a pinch.} \end{cases} \]
Each edge in the 3-circuit $abc$ in $F(\re(\Om,H))$ corresponds to a path in $\Om$ linking pairs of vertices in $\{v_A, v_B, v_C\}$, with $a$ corresponding to a $v_A$-$v_B$ path, $b$ a $v_B$-$v_C$ path, and $c$ a $v_C$-$v_A$ path.  
Indeed, circuit $abc$ in $\re(\Om,H)$ may be obtained as a minor of $\Om$ from such paths.  
If in $\Psi$, edges $a$ and $b$ share a common endpoint $x_{ab}$, edges $b$ and $c$ share common endpoint $x_{bc}$, and edges $a$ and $c$ share endpoint $x_{ac}$, then construct $\Om'$ by identifying vertex $v_B$ with $x_{ab}$, vertex $v_C$ with $x_{bc}$, and vertex $v_A$ with $x_{ac}$.  
Observe that in the case $abc$ is a pinched triangle, two of $x_{ab}$, $x_{bc}$, $x_{ac}$ are the same vertex, thus $H'$ is a pinch in $\Om'$, as desired.  
If $abc$ is a rolled-up triangle, and so has two edges, say $a$ and $c$, that do not share an endpoint, then again identify vertex $v_B$ with $x_{ab}$ and vertex $v_C$ with $x_{bc}$, and roll-up the edges in $A$.  
We call the biased graph $\Om'$ resulting from carrying out this procedure for each 3-circuit $C_i$ ($i \in \{1, \ldots, k\}$) that is not a contrabalanced theta an \emph{$H$-enlargement} of $\Psi$.  

Figures \ref{fig:G-u_bal_G-v_3simclasses_reps_intro} and \ref{fig:G-u_bal_G-v_3simclasses_reps_with_bags_of_graph_intro} provide an example of this process.  
\begin{figure}[tbp] 
\begin{center} 
\includegraphics[scale=0.9]{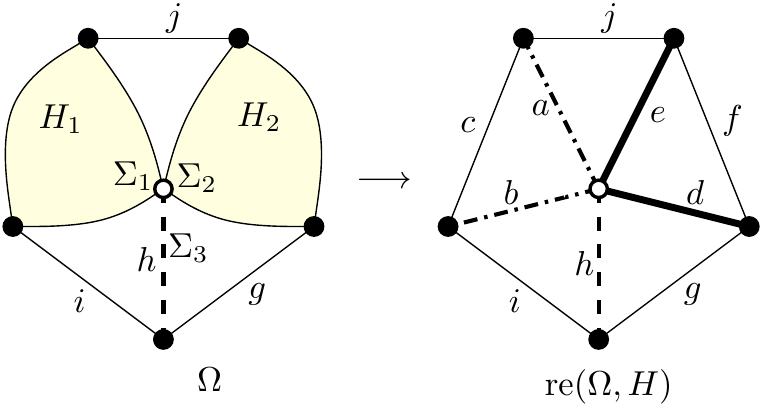}
\end{center} 
\caption{
$\Om$ and $\re(\Om,H)$. 
}
\label{fig:G-u_bal_G-v_3simclasses_reps_intro} 
\end{figure} 
\begin{figure}[tbp] 
\begin{center} 
\includegraphics[scale=0.9]{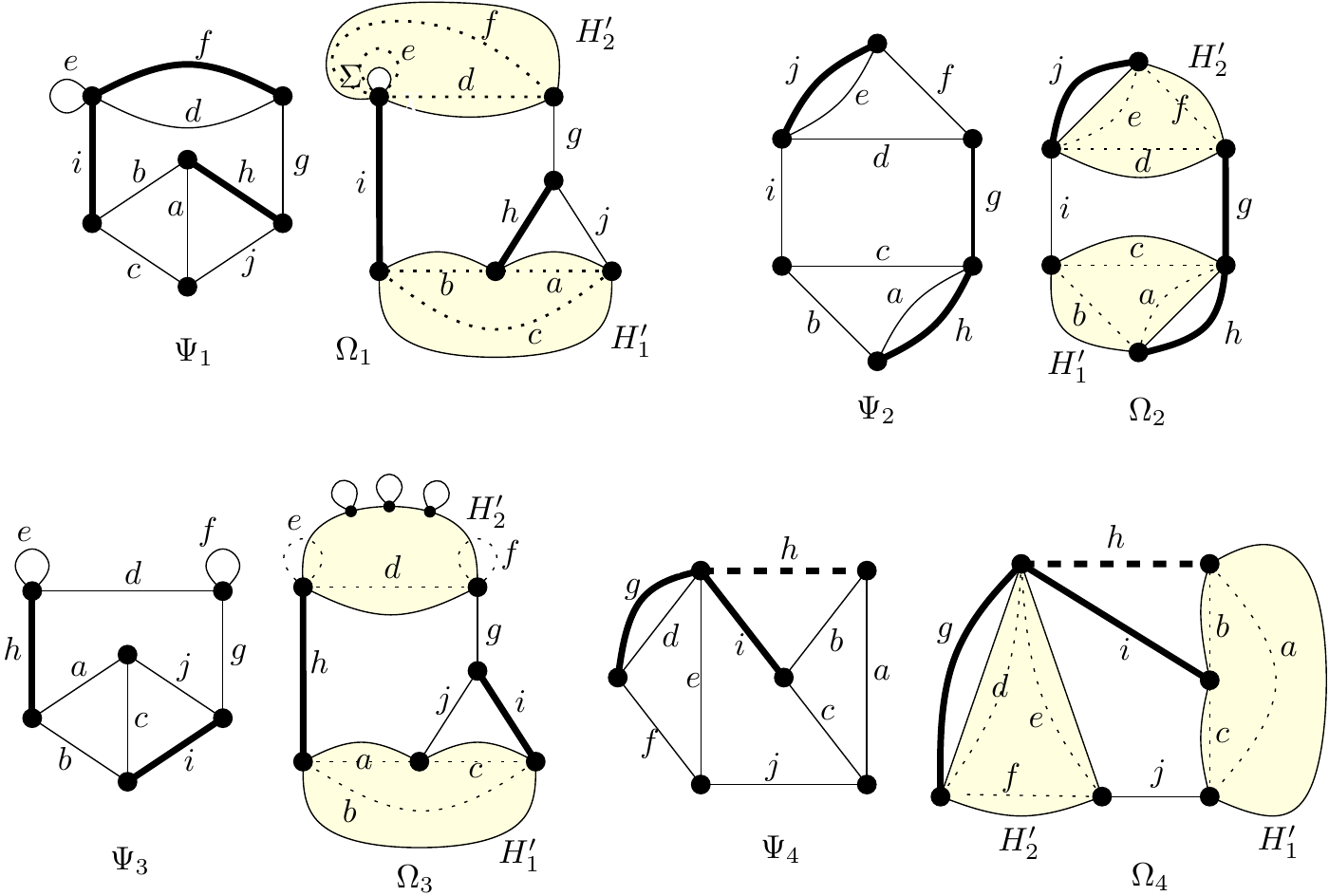}
\end{center} 
\caption{Biased graphs representing $F(\Om)$ obtained as $H$-enlargements.}
\label{fig:G-u_bal_G-v_3simclasses_reps_with_bags_of_graph_intro} 
\end{figure} 
Figure \ref{fig:G-u_bal_G-v_3simclasses_reps_intro} shows the $H$-reduction of a biased graph $\Om$ (whose balancing vertex is indicated as a white disc). 
Figure \ref{fig:G-u_bal_G-v_3simclasses_reps_with_bags_of_graph_intro} shows $H$-enlargements $\Om_1$, $\Om_2$, $\Om_3$, $\Om_4$ of four biased graphs $\Psi_1$, $\Psi_2$, $\Psi_3$, $\Psi_4$, respectively, each with $F(\Psi_i) \cong F(\re(\Om,H))$. 
Each of these $H$-enlargements $\Om_1$, $\Om_2$, $\Om_3$, $\Om_4$ has $F(\Om_i) \cong F(\Om)$. 

The following lemma will enable us to show that aside from roll-ups, all biased graphs representing $F(\Om)$ are obtained as $H$-enlargements.  

\begin{lem} \label{lem:H_enlargements_are_all}
Let $\Om$ be a biased graph with $F(\Om)$ 3-connected.  
Suppose for $i \in \{1, \ldots, k\}$, $(X_i, Y_i)$ is a 
partition of $E(\Om)$ and $\bigcap_i X_i = \emptyset$.  
Let $H_i = \Om[X_i]$ be the biased subgraph induced by $X_i$, and let $S_i = V(X_i) \cap V(Y_i)$.  
Suppose for each $i \in \{1, \ldots, k\}$, either $|S_i|=3$ and $H_i$ is balanced, or $|S_i|=2$ and $H_i$ is a pinch with its balancing vertex contained in $S_i$.  
Suppose further that $V(H_i) \setminus S_i$ is nonempty, that every vertex $x \in V(H_i) \setminus S_i$ is committed, and that there is no additional partition $(X,Y)$ satisfying these conditions.  
Let $H=\{H_1, \ldots, H_k\}$. 
If $\Om'$ is a biased graph representing $F(\Om)$, then $\Om'$ is an $H$-enlargement of a biased graph $\Psi$ with $F(\Psi) = F(\re(\Om,H))$.  
\end{lem}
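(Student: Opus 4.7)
The plan is to build $\Psi$ from $\Om'$ by performing the analogue of the $H$-reduction directly on $\Om'$, lobe by lobe, and then verify that the result has the correct matroid and that $\Om'$ is recovered by the $H$-enlargement procedure. First, for each $i \in \{1,\ldots,k\}$, I would apply Lemma~\ref{lem:possible_committed_lobe_reps} (when $|S_i|=3$ and $H_i$ is balanced) or Lemma~\ref{lem:possible_committed_lobe_reps_pinched} (when $|S_i|=2$ and $H_i$ is a pinch) to the partition $(X_i,Y_i)$; these lemmas apply because by hypothesis every vertex of $V(H_i)\setminus S_i$ is committed. The conclusion is that the biased subgraph $H_i' \subseteq \Om'$ induced by the edges of $X_i$ takes exactly one of three forms: (a) balanced and isomorphic to $H_i$ (or to the split $H_i''$ in the pinched case), (b) a pinch of two vertices in $S_i$ (resp.\ $\{u_1,u_2,v\}$), or (c) a roll-up of all edges of $H_i$ (resp.\ $H_i''$) incident to a single vertex of $S_i$ (resp.\ $\{u_1,u_2,v\}$).

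Next, I would construct $\Psi$ from $\Om'$ by replacing each $H_i'$ with a 3-circuit $C_i$ of matching type: a balanced triangle in case~(a), a pinched triangle in case~(b), or a rolled-up triangle in case~(c). The attachment of $C_i$ in $\Psi$ is dictated by the vertices of $\Om'$ playing the role of $S_i$ in $H_i'$, with the distinguished pinched or rolled-up vertex of $C_i$ identified with the corresponding distinguished vertex of $H_i'$. This is by design exactly the $H$-reduction operation applied to $\Om'$ with lobes $H_1',\ldots,H_k'$. The crux of the argument is then the matroid equality $F(\Psi) = F(\re(\Om,H))$. I would verify this by decomposing the circuits of $F(\Om') = F(\Om)$ into those lying entirely outside the lobes — which are unchanged on both sides of the comparison — and those meeting some lobe $H_i$; the latter, by Zaslavsky's circuit description and the rigidity of the committed vertices inside $H_i$, are summarised identically in $\Psi$ and in $\re(\Om,H)$ by the 3-circuit $C_i$ and its three possible forms.

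With $F(\Psi) = F(\re(\Om,H))$ in hand, each $C_i$ in $\Psi$ records the type of $H_i'$ (balanced triangle with form~(a), pinched triangle with (b), rolled-up triangle with (c)). Therefore the $H$-enlargement procedure of Section~\ref{sec:representations} applied to $\Psi$ — which re-expands each $C_i$ into $H_i'$ according to its type and the vertex identifications dictated by the edge-incidences at the endpoints of $C_i$ in $\Psi$ — recovers precisely $\Om'$, so $\Om'$ is an $H$-enlargement of $\Psi$. The main obstacle I anticipate is the bookkeeping required for the matroid equality, specifically ruling out spurious cross-lobe dependencies introduced when several lobes are replaced simultaneously. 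This is prevented by the pairwise edge-disjointness of the lobes (encoded by $\bigcap_i X_i = \emptyset$ together with the structural constraints on each $H_i$) and by the maximality hypothesis, which together ensure that the committed-vertex structure matroidally isolates each lobe from the others.
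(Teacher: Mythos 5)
Your architecture is right --- invoke Lemmas \ref{lem:possible_committed_lobe_reps} and \ref{lem:possible_committed_lobe_reps_pinched} to pin down the form of each $H_i'$ in $\Om'$, produce a small biased graph $\Psi$, and check that re-expanding recovers $\Om'$ --- but the step you yourself identify as the crux, the equality $F(\Psi)=F(\re(\Om,H))$, is asserted rather than proved, and the sketch you give for it does not work as stated. An $H$-reduction is a \emph{minor} operation, not a restriction: a lobe is collapsed to a triangle by contracting paths inside it, so a circuit of $F(\Om)$ that merely passes through a lobe corresponds to a circuit of $F(\re(\Om,H))$ only after that passage has been contracted down to a single triangle edge, and a circuit ``lying entirely outside the lobes'' need not survive unchanged either, since its avoidance of the lobes says nothing about how the contractions affect it. Saying that the circuits meeting a lobe are ``summarised identically'' in $\Psi$ and in $\re(\Om,H)$ is exactly the statement that needs proof, and ``rigidity of the committed vertices'' is not an argument for it.

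The paper sidesteps this bookkeeping entirely. Write $\re(\Om,H)=\Om\setminus S/T$ for explicit edge sets $S,T$, and \emph{define} $\Psi:=\Om'\setminus S/T$. Since the frame matroid operator commutes with deletion and contraction, $F(\Psi)=F(\Om')\setminus S/T=F(\Om)\setminus S/T=F(\re(\Om,H))$, so the matroid equality comes for free. What remains is precisely the part your proposal handles correctly: Lemmas \ref{lem:possible_committed_lobe_reps} and \ref{lem:possible_committed_lobe_reps_pinched} show that each $X_i$ induces in $\Om'$ a subgraph $H_i'$ of one of the three admissible types, so the minor $C_i=H_i'\setminus S/T$ is a triangle of the matching type in $\Psi$, and replacing each $C_i$ by $H_i'$ --- the $H$-enlargement --- returns $\Om'$. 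If you insist on your replacement-based construction of $\Psi$, you would still have to show it agrees matroidally with this minor, which amounts to the same work; defining $\Psi$ as the minor from the outset is the clean route.
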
 

\begin{proof} 
Biased graph $\re(\Om,H)$ is a minor of $\Om$, say $\Om \setminus S / T = \re(\Om,H)$.  
Then $F(\Om) \setminus S / T = F(\re(\Om,H))$, and $F(\Om') \setminus S / T = F(\Psi)$, where $\Psi = \Om' \setminus S / T$.  
Since $F(\Om') = F(\Om)$, we have $F(\re(\Om,H)) = F(\Psi)$, so this produces a biased graph $\Psi$ with $F(\Psi) = F(\re(\Om,H))$.  
For each $i \in \{1, \ldots, k\}$, in $F(\re(\Om,H))$ there is a circuit $C_i$ of size three resulting from the minor operations which brought $\Om$ to $\re(\Om,H)$; \ie, for some $H_i \subseteq \Om$, $C_i = H_i \setminus S / T$ in $\re(\Om,H)$.  
By Lemma \ref{lem:possible_committed_lobe_reps} or \ref{lem:possible_committed_lobe_reps_pinched}, the set of edges $X_i$ in $\Om'$ induces a biased subgraph $H_i'$ of one of types 1, 2, or 3, as described in Lemma \ref{lem:possible_committed_lobe_reps} or \ref{lem:possible_committed_lobe_reps_pinched}.  
In $F(\Psi)$, $C_i$ forms a circuit of size 3.  
Replacing $C_i$ with
(1) a balanced subgraph isomorphic to $H_i$ or $H_i''$ if $C_i$ is a balanced triangle, 
(2) a pinch of two vertices in $\{u,v,w\}$ of $H_i$ or $\{u_1,u_2,v\}$ of $H_i''$ if $C_i$ a pinched triangle, or 
(3) a roll-up of $H_i$ from one of $u$, $v$, or $w$ or of $H_i''$ from one of $u_1$, $u_2$, or $v$, if $C_i$ is a rolled-up triangle, yields $\Om'$. 
\end{proof} 

Hence given the maximal collection of edge disjoint lobes $H = \{H_1, \ldots, H_k\}$ of $\Om$, we obtain all biased graphs representing $F(\Om)$ as $H$-enlargements of biased graphs with frame matroids isomorphic to $F(\re(\Om,H))$.  
To find all biased graphs representing $F(\Om)$ therefore, we just need find all biased graphs with frame matroids isomorphic to $F(\re(\Om,H))$.  

Theorem \ref{mainthm:main_bal_vertex_rep} asserts that in all cases, $\re(\Om,H)$ is small enough that this is not difficult.  
Since $\re(\Om,H)$ is small, all biased graphs $\Psi$ with $F(\Psi) = F(\re(\Om,H))$ may be easily determined by an exhaustive search.  
For instance, this can be done as follows.  
Set $n=\rank(F(\re(\Om,H)))$, and list the circuits $\Cc = \{C_1, C_2, \dots, C_k\}$ of $F(\re(\Om,H))$ of size at most $n$.  
List all biased graph representations on at most $n$ vertices of $C_1$.  
For each of these, list all biased graphs on at most $n$ vertices whose edges represent both $C_1$ and $C_2$ but contain no circuit not in $\Cc$.  
For each of the resulting biased graphs, list all biased graphs on $n$ vertices whose edges represent $C_1$, $C_2$, and $C_3$, but contain no circuit not in $\Cc$.  
Continuing in this manner, after $k$ steps we obtain a list of all biased graphs whose circuits are precisely those in $\Cc$.  
Since $n \leq 6$, this process is practical even by hand (if somewhat labourious).  

\subsection{Proofs of Corollaries \ref{maincor:notsomanybiasedgraphreps} and \ref{maincor:fourconnbalvertexrepunique}} 
\label{sec:proofsofcorollaries} 

In the course of proving Theorem \ref{mainthm:main_bal_vertex_rep}, it is shown that there are only a finite number of $H$-reductions possible.  
Counting the number of biased graphs representing the matroid in each case yields Corollary \ref{maincor:notsomanybiasedgraphreps}.   
We do not do this counting here.  
The required biased graphs are exhibited in Chapter 5 of \cite{funkthesis}.  
Alternatively, the interested reader may verify Corollary \ref{maincor:notsomanybiasedgraphreps} by producing the required biased graphs using a method such as that described in the previous paragraph.  

Since the application of an $H$-reduction depends upon the existence of a biased subgraph $H_i \in H$ of $\Om$ for which $\br{E(H_i), E(\Om) \setminus E(H_i)}$ is a 3-separation, Corollary \ref{maincor:fourconnbalvertexrepunique} also follows immediately from Theorem \ref{mainthm:main_bal_vertex_rep}:  

\begin{proof}[Proof of Corollary \ref{maincor:fourconnbalvertexrepunique}] 
By Theorem \ref{mainthm:main_bal_vertex_rep}, if $\Om'$ is a biased graph with $F(\Om') = F(\Om)$ that is not obtained as a roll-up of $\Om$, then $F(\Om)$ has a 3-separation.  
\end{proof}

\section{Proof of Theorem \ref{mainthm:main_bal_vertex_rep}}

We are now ready to prove Theorem \ref{mainthm:main_bal_vertex_rep}.  
The proof proceeds through several cases.  
Let $\Om$ be a 3-connected biased graph with a balancing vertex $u$, with $F(\Om)$ nongraphic and 3-connected. 
(By Lemma \ref{lem:G_3conn_so_FG_3conn}, the assumption that $F(\Om)$ is 3-connected just serves to ensure that $\Om$ has no balanced loop nor balanced 2-cycle.) 
We show that either up to roll-ups $\Om$ uniquely represents $F(\Om)$, or $\Om$ has a collection $H$ of biased subgraphs such that the $H$-reduction of $\Om$ has at most six vertices.  
By Lemma \ref{lem:H_enlargements_are_all}, all representations of $F(\Om)$ are obtained as $H$-enlargements of the biased graphs $\Psi$ with $F(\Psi) = F(\re(\Om,H))$, so in each case finding such an $H$-reduction completes the proof.  

Here an an outline of the proof:  

\begin{itemize} 
\item  If $u$ is the only uncommitted vertex of $\Om$, we show that up to roll-ups $\Om$ uniquely represents $F(\Om)$.   
\item  If $\Om$ has a second uncommitted vertex $v$, then we consider two cases, according to whether $\Om$ has an unbalanced loop incident to $u$, or not.  
\begin{itemize} 
\item  If $\Om$ has an unbalanced loop incident to $u$, we show that there are at most two unbalancing classes in $\delta(u)$ in $\Om-v$.  We then consider two subcases, and find that in each subcase the $H$-reductions of $\Om$ are on at most six vertices.  
\item  If there is no unbalanced loop incident to $u$, we show that there are at most three unbalancing classes in $\delta(u)$ in $\Om-v$.  We consider three subcases, according to the number of unbalancing classes in $\delta(u)$ in $\Om-v$ and in $\Om$.  
We again find that in each subcase there is a collection $H$ of biased subgraphs such that $| V(\re(\Om,H)) | \leq 6$.  
\end{itemize} 
\end{itemize} 

We now proceed with the proof.  

\bigskip 
\noindent \textbf{All but the balancing vertex are committed.} 
If $u$ is the only uncommitted vertex of $\Om$, things are straightforward: 

\begin{thm} 
Let $\Om$ be a biased graph with balancing vertex $u$, and with $F(\Om)$ 3-connected and nongraphic.  
If all vertices $v \in V(\Om) \setminus \{u\}$ are committed, then every biased graph $\Om'$ with $F(\Om') = F(\Om)$ is obtained as a roll-up of $\Om$.  
\end{thm}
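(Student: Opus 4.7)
The plan is to use the committed-vertex machinery to pin down the structure of any alternative representation $\Om'$ almost entirely, leaving flexibility only at edges incident to $u$, and then to show that this flexibility corresponds exactly to a roll-up.

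Setup: standard arguments (Zaslavsky's characterisation of components of frame matroids in their biased graph representations) allow us to assume $\Om'$ is connected, and $F(\Om)$ being nongraphic forces $\Om'$ unbalanced. The rank formula then gives $|V(\Om')| = r(F(\Om)) = |V(\Om)|$. By Proposition \ref{prop:comp_cocircuit_connnonbinhyperplane} applied to each committed vertex, every $v \in V(\Om) \setminus \{u\}$ determines a unique vertex $v' \in V(\Om')$ with the same set of incident edges, leaving a single unaccounted vertex $u' \in V(\Om')$.

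Edge placement: edges between two committed vertices appear with the same endpoints in $\Om'$. For each link $e = uv \in \delta(u)$ in $\Om$, one endpoint in $\Om'$ must be $v'$, and the other cannot be any committed vertex $w' \neq v'$ (this would force $e$ to be incident to $w$ in $\Om$, impossible), hence is either $v'$ (making $e$ an unbalanced loop at $v'$) or $u'$ (making $e$ a $u'v'$ link). Any unbalanced loop at $u$ in $\Om$ must similarly have both endpoints in $\Om'$ equal to $u'$, so it remains a loop at $u'$.

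Bias constraints: by Lemma \ref{obs:exists_equiv_rel_on_delta_u}, $\delta(u)$ partitions into unbalancing classes $\Sigma_1, \ldots, \Sigma_k$, where $e_i \sim e_j$ exactly when some balanced cycle through $u$ contains both. For $e_i, e_j$ in a common class $\Sigma_l$ on such a cycle $C$, the set $E(C)$ is a circuit of $F(\Om) = F(\Om')$; if $\Om'$ placed one of these edges as a link to $u'$ and the other as a loop at its committed endpoint, then $E(C)$ would form a path with a loop at one end, which is independent in $F(\Om')$ by the rank formula---a contradiction. Hence edges in the same unbalancing class are uniformly rolled up to loops or uniformly retained as $u'$-links in $\Om'$. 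Conversely, taking an unbalanced cycle $C$ through $u$ with incident edges $e_i \in \Sigma_a,\ e_j \in \Sigma_b$ ($a \neq b$), the set $E(C)$ is not a circuit of $F(\Om)$; were both these edges loops in $\Om'$, then $E(C)$ would be a loose handcuff (two unbalanced loops joined by a path) in $\Om'$, hence a circuit of $F(\Om')$---again a contradiction. Therefore the set of edges of $\delta(u)$ that become loops in $\Om'$ is either empty or exactly one unbalancing class, which is precisely a roll-up.

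The main obstacle is the bookkeeping around the set $U$ of unbalanced loops of $\Om$: the roll-up set is parameterised by which unbalancing class (including $U$ itself) is realised as loops, and one must verify that the forced behaviour in $\Om'$ of the edges of $U$ matches exactly one of the $k+2$ configurations of $\{\Om, \Om_0, \Om_1, \ldots, \Om_k\}$. This is handled cleanly by first passing to the unrolled form of $\Om$, so that the edges of $U$ become an additional unbalancing class $\Sigma_0 \subseteq \delta(u)$; the dichotomy above then applies uniformly to all of $\Sigma_0, \Sigma_1, \ldots, \Sigma_k$, and the ``loops'' class determined by $\Om'$ selects the appropriate member of the roll-up set.
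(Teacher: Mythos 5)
Your proposal is correct and follows essentially the same route as the paper's proof: use committedness to fix a vertex $v'$ with $\delta(v')=\delta(v)$ for every $v\neq u$, observe that the only freedom is whether each edge of $\delta(u)$ becomes a $u'$-link or an unbalanced loop at its committed endpoint, and then use a path in $\Om-u$ joining the endpoints of two such edges to show (via the circuit/independent-set dichotomy for balanced versus unbalanced cycles through $u$) that the rolled-up edges form exactly one unbalancing class or none. Your explicit handling of the unbalanced loops at $u$ via the unrolled form is a slightly tidier bookkeeping of the same argument.
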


\begin{proof} 
Let $A_1, A_2, \ldots, A_k$ be the unbalancing classes of $\delta(u)$.  
Since $F(\Om)$ is nongraphic, $k \geq 3$.  
Since $F(\Om)$ is 3-connected, there is at most one loop $l$ incident to $u$, which is unbalanced.  
Since every vertex but $u$ is committed, every biased graph representing $F(\Om)$ has a biased subgraph isomorphic to $\Om-u$.  
Let $\Om'$ be a biased graph with $F(\Om') = F(\Om)$.  
Then for every vertex $v \in V(\Om-u)$ there is a vertex $v' \in V(\Om')$ with $\delta(v') = \delta(v)$.  
Moreover, each element represented by a $u$-$v$ edge in $A_i$, $i \in \{1,\ldots, k\}$, is represented in $\Om'$ by either an edge incident to $v'$ or an unbalanced loop incident to $v'$.  
Since $F(\Om)$ is nongraphic, every biased graph representing $F(\Om)$ has $|V(\Om)|$ vertices.  
Hence every biased graph representing $F(\Om)$ may be obtained from $G-u$ by adding a vertex $u'$, and adding the edges in $A_1, \ldots, A_k$, and $l$, such that the resulting biased graph has frame matroid isomorphic to $F(\Om)$.  
Again, since every vertex of $\Om$ but $u$ is committed, for each edge $e=uv$ in a set $A_i$, in $\Om'$ one of the endpoints of $e$ is $v'$, and the only choice is whether $e$ has $u'$ as its other endpoint or $e$ is an unbalanced loop incident to $v'$.  

Since $l \notin \delta(v)$ for any $v \not= u$, $l$ cannot be incident to any vertex $v'$ corresponding to a vertex $v \not= u$ in $\Om$, and so must be incident only to $u'$ in $\Om$, and so remains an unbalanced loop in $\Om'$.  
Now suppose an element $e$ represented by an edge $uv$ in $A_i$, for some $i \in \{1,\ldots,k\}$, is represented by an unbalanced loop incident to $v'$ in $\Om'$.  
Let $f=uw$ be an edge in $A_j$, $j \in \{1, \ldots, k\}$.  
There is a $v$-$w$ path $P$ in $\Om-u$, and a corresponding $v'$-$w'$ path $P'$ with $E(P')=E(P)$ in $\Om'$.  
If $j \not= i$, then $E(P) \cup \{e,f\}$ is independent in $F(\Om)$, and so $f$ is not an unbalanced loop in $\Om'$; $f$ is therefore a $u'$-$w'$ edge in $\Om'$.  
If $j = i$, then $E(P) \cup \{e,f\}$ is a circuit of $F(\Om)$, which implies $f$ must be an unbalanced loop incident to $w'$ in $\Om'$.  
\end{proof}

We now proceed with the case that $\Om$ has a second uncommitted vertex.  

\bigskip
\noindent \textbf{$\Om$ has at least $2$ uncommitted vertices.} 
Let $\Om=(G,\Bb)$ be a 3-connected biased graph with a balancing vertex $u$, with $F(\Om)$ nongraphic and 3-connected, 
and with an uncommitted vertex $v \not= u$.  
Set $V=V(G)$ and $E = E(G)$.  

We now have several cases to consider, according to whether or not there is an unbalanced loop at $u$, the number of unbalancing classes in $\Om$ and in $\Om-v$, and their sizes. 
By Lemma \ref{lem:H_enlargements_are_all}, we just need show that in each case there is a collection of biased subgraphs $H$ such that we may apply an $H$-reduction to obtain a biased graph on at most six vertices.  

\subsection{$\Om$ has an unbalanced loop on $u$}  \label{sec:Om_hasanunbalancedlooponu}
\counterwithin{thm}{subsection}
\setcounter{thm}{0}

We first consider the case that there is an unbalanced loop $l$ incident to $u$.  

\begin{lem}
There are at most two unbalancing classes in $\delta(u)$ in $\Om-v$.  
\end{lem}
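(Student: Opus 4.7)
The plan is to argue by contradiction using the tools of Section \ref{sec:committedvertices}: I will build, inside $\Om-v$, a contrabalanced theta together with an unbalanced cycle avoiding one of its branch vertices, apply Lemma \ref{lem:odd_theta_and_unbal_cycle_U_24} to produce a $U_{2,4}$ minor in $F(\Om-v)$, and then invoke Lemma \ref{lem:x_committed_iff_U24} to conclude that $v$ is committed, contradicting our standing hypothesis. So I would assume $\delta(u)$ has three distinct unbalancing classes $\Sigma_1,\Sigma_2,\Sigma_3$ in $\Om-v$, and pick representatives $e_i=ux_i\in\Sigma_i$ for $i\in\{1,2,3\}$.

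To construct the theta $T$ I would exploit the fact that $\Om-\{u,v\}$ is connected (since $\Om$ is 3-connected). First take an $x_1$-$x_2$ path $P$ in $\Om-v-u$; closing $P$ with $e_1$ and $e_2$ yields a cycle $C_{12}$ through $u$, which Lemma \ref{lem:ei_ej_with_balancing_vertex_have_same_parity_cycles} forces to be unbalanced since $e_1,e_2$ lie in different unbalancing classes at $u$. Next, take a minimal (possibly trivial) path $Q$ from $x_3$ to $V(C_{12})\setminus\{u\}$ in $\Om-v-u$, ending at some $y$ on $C_{12}$. The two arcs of $C_{12}$ from $u$ to $y$, together with $e_3$ followed by $Q$, give three internally disjoint $u$-$y$ paths whose union is a theta subgraph $T$ of $\Om-v$ with branch vertices $u$ and $y\neq u$. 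Each of the three cycles of $T$ contains two of $e_1,e_2,e_3$, drawn from distinct unbalancing classes at $u$, so Lemma \ref{lem:ei_ej_with_balancing_vertex_have_same_parity_cycles} again makes each of them unbalanced; hence $T$ is contrabalanced.

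Finally, $l$ is an unbalanced loop at $u$, so $l$ is an unbalanced cycle of $\Om-v$ that avoids the branch vertex $y$ of $T$. Lemma \ref{lem:odd_theta_and_unbal_cycle_U_24} then delivers a $U_{2,4}$ minor in $F(\Om-v)$, making $F(\Om-v)$ nonbinary, and Lemma \ref{lem:x_committed_iff_U24} promotes this to committedness of $v$, the desired contradiction. The only step requiring real care is arranging $T$ so that each of its three cycles contains edges from two distinct unbalancing classes at $u$; the construction above achieves this by design using only the 3-connectivity of $\Om$, so no further obstacles arise.
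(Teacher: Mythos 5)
Your proof is correct and follows essentially the same strategy as the paper: both derive a contradiction by exhibiting a $U_{2,4}$ minor in $F(\Om-v)$ from three unbalancing classes in $\delta(u)$ together with the loop $l$, and then invoke Lemma \ref{lem:x_committed_iff_U24} to conclude that $v$ is committed. The paper reaches the minor more economically---contracting all edges of the connected graph $\Om-v$ not incident to $u$ and keeping one representative of each of the three classes plus $l$---whereas you construct an explicit contrabalanced theta and route through Lemma \ref{lem:odd_theta_and_unbal_cycle_U_24}, but the underlying idea is identical.
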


\begin{proof} 
Suppose for a contradiction that there are three unbalancing classes in $\delta(u)$ in $\Om-v$.  
Since $\Om-v$ is connected, contracting all edges not incident to $u$ then deleting all but one edge in each of three unbalancing classes yields, together with $l$, a biased graph representing $U_{2,4}$.  
Hence $F(\Om-v)$ is nonbinary, and so by Lemma \ref{lem:x_committed_iff_U24} $v$ is committed, a contradiction.  
\end{proof}

Let $\Sigma_1, \Sigma_2, \ldots, \Sigma_k$ be the unbalancing classes of $\delta(u)$ in $\Om$, with $\Sigma_1, \Sigma_2$ the two unbalancing classes of $\delta(u)$ remaining in $\Om-v$ (with possibly one of $\Sigma_1$ or $\Sigma_2$ empty).  
Since $F(\Om)$ is nongraphic, $k \geq 3$ (otherwise $\Om$ is signed graphic by Proposition \ref{prop:If_no_odd_theta}, and so is graphic by Proposition \ref{prop:vertex_splitting_operation}).  
That is, there is at least one $u$-$v$ edge in an unbalancing class other than $\Sigma_1$ or $\Sigma_2$. 
Since $F(\Om)$ is 3-connected, no two $u$-$v$ edges are in the same unbalancing class.  
Let $C$ be the set of edges in $\delta(v) \setminus \delta(u)$, and let $S =\Sigma_3 \cup \cdots \cup \Sigma_k$ be the set of $u$-$v$ edges not in $\Sigma_1$ or $\Sigma_2$.  
Let $Y = \{l\} \cup \{ e : e$ is a $u$-$v$ edge$\}$, and let $X = E \setminus Y$.  
If $X$ is empty, then $F(\Om) = U_{2,m+1}$, where $m$ is the number of $u$-$v$ edges.  
Every representation of $U_{2,m+1}$ is a roll-up of $\Om$, so Theorem \ref{mainthm:main_bal_vertex_rep} holds in this case.  
So assume $X \not= \emptyset$.  
This implies $|V(\Om)|>2$.  
Now if either $\Sigma_1$ or $\Sigma_2$ has no edge with an endpoint different from $v$, then $(X,Y)$ is a 2-separation of $F(\Om)$, a contradiction.  
Hence $X$ contains an edge in each of $\Sigma_1$ and $\Sigma_2$.  
Let $W=V \setminus \{u,v\}$.  
Since $\Om$ is 3-connected, 
the set of neighbours of $u$ in $W$ has size at least 2 
(else the single neighbour together with $v$ would separate $u$ from the rest of $\Om$), 
and the set of neighbours of $v$ in $W$ has size at least 2 
(else the single neighbour together with $u$ would separate $v$ from the rest of $\Om$). 

\begin{lem}\label{lem:Omis2conn} 
$\Om[X]$ is 2-conected. 
\end{lem}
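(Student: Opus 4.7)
The strategy is to show that any proper $0$- or $1$-separation of $\Om[X]$ extends to a proper separation of $\Om$ of order at most $2$, contradicting the 3-connectivity of $\Om$. The key enabling fact is that every edge of $Y$ has both endpoints in $\{u,v\}$: the loop $l$ sits at $u$ and every other edge of $Y$ is a $u$-$v$ link. Hence reattaching $Y$ to one side of a separation of $\Om[X]$ enlarges the shared vertex set by at most the two vertices $u,v$.

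I would first observe that $u, v \in V(X)$: edges in $\Sigma_1 \cap X$ and $\Sigma_2 \cap X$ have $u$ as an endpoint (these exist by the preceding paragraph, which showed $X$ contains an edge in each of $\Sigma_1$ and $\Sigma_2$), while $v$ has at least two $W$-neighbors, joined to $v$ by edges of $C \subseteq X$. Now suppose for contradiction that $\Om[X]$ admits a proper separation $(X_1, X_2)$ with $V(X_1) \cap V(X_2) \subseteq \{z\}$ for some vertex $z$ (with $V(X_1) \cap V(X_2) = \emptyset$ in the disconnected case). Since $u$ is an endpoint of some $X$-edge, we may relabel so that $u \in V(X_1)$. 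Attaching $Y$ to side $1$ yields a separation $(X_1 \cup Y, X_2)$ of $\Om$, and since every edge of $Y$ has endpoints in $\{u,v\}$,
\[ V(X_1 \cup Y) \cap V(X_2) \subseteq \bigl(V(X_1) \cap V(X_2)\bigr) \cup \bigl(\{u,v\} \cap V(X_2)\bigr) \subseteq \{z, v\}, \]
so its order is at most $2$.

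It then remains to verify that this separation of $\Om$ is proper, at which point 3-connectivity of $\Om$ supplies the desired contradiction. Side $1$ always has an exclusive vertex: $u$ serves unless $u=z$, in which case $v \notin V(X_2)$ (otherwise $v \in V(X_1) \cap V(X_2) = \{u\}$, impossible), so $v \in V(Y) \subseteq V(X_1 \cup Y)$ is exclusive. For side $2$, the only threat is $V(X_2) \subseteq \{z, v\}$; in that case $X_2$ would consist solely of $v$-$z$ edges (no loops being available at any vertex $\neq u$, and $l \in Y$), of which there is at most one by the simplification hypothesis ruling out balanced $2$-cycles. Yet $v$ has two $W$-neighbors reached by $X$-edges, so not all $X$-edges incident with $v$ can lie in $X_2$, forcing $v$ to be incident to an $X_1$-edge with an endpoint in $W \setminus \{z\}$, which contradicts $V(X_1) \cap V(X_2) \subseteq \{z\}$.

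The main obstacle lies in this final verification, particularly the degenerate subcases where $z$ coincides with $u$ or $v$; in each, the simplification hypotheses and the lower bound of two on the sizes of the neighborhoods of $u$ and $v$ in $W$ suffice to close the argument.
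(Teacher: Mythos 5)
Your proof is correct in substance, but it takes a different route from the paper's. The paper's proof is a one\nobreakdash-liner at the matroid level: a cut vertex of $\Om[X]$ would produce a $2$-separation of $\Om$ with one side balanced, hence (by Lemma \ref{lem:balancedsideof2sepa2sep}) a $2$-separation of $F(\Om)$, contradicting the $3$-connectivity of the \emph{matroid}. You instead stay entirely at the graph level: you reattach $Y$ (whose edges all have endpoints in $\{u,v\}$) to the side containing $u$ and contradict the $3$-connectivity of the \emph{graph} $\Om$. Your version is more elementary and more complete --- it handles disconnectedness of $\Om[X]$ as well as cut vertices, and it carries out the properness check that the paper leaves implicit --- at the cost of being longer; the paper's version gets to reuse machinery already set up for translating graph separations into matroid separations. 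One local slip worth noting: in the case $u=z$ your justification that side $1$ is proper ("otherwise $v\in V(X_1)\cap V(X_2)$") presupposes $v\in V(X_1)$, which you have not established. This does not damage the proof, because side $1$ is automatically proper: properness of the original separation gives a vertex $a\in V(X_1)\setminus V(X_2)$, and $a$ remains exclusive to side $1$ after $Y$ is attached, since $V(X_1\cup Y)\supseteq V(X_1)$. Similarly, to see that there is at most one $v$\nobreakdash-$z$ edge you should also rule out unbalanced $2$-cycles at $\{v,z\}$; this follows not from the simplification hypothesis but from $u$ being a balancing vertex (note $z\neq u$ there, as $X$ contains no $u$\nobreakdash-$v$ edges) --- though in fact the multiplicity of $v$\nobreakdash-$z$ edges plays no role in your final contradiction.
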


\begin{proof} 
A cut vertex in $\Om[X]$ would imply the existence of a 2-separation of $F(\Om)$, by Lemma \ref{lem:balancedsideof2sepa2sep}. 
\end{proof}

We consider two subcases: since $S$ is nonempty, either $|S| \geq 2$ or $|S|=1$. 

\subsubsection*{Subcase 1. There are at least two $u$-$v$ edges not in $\Sigma_1 \cup \Sigma_2$.}
\mbox{} 
\smallskip

\noindent 
Suppose that $|S| \geq 2$ (Figure \ref{fig:anotherfigureofHes1}).  
\begin{claim} 
Every vertex in $W$ is committed.  
\end{claim}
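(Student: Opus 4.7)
The plan is to reduce the claim to showing $F(\Om - w)$ is nonbinary for every $w \in W$; by Lemma \ref{lem:x_committed_iff_U24} this is equivalent to $w$ being committed. To exhibit a $U_{2,4}$-minor of $F(\Om - w)$, the key resource is the unbalanced loop $l$ at $u$, which is present in $\Om - w$ since $w \neq u$. Combined with the two $u$-$v$ edges $e_3 \in \Sigma_3$ and $e_4 \in \Sigma_4$ (which also survive since $w \neq u,v$), the set $\{l, e_3, e_4\}$ is a tight pair of handcuffs, hence a $3$-circuit of $F(\Om - w)$ spanning a rank-$2$ flat. It therefore suffices to produce a fourth element in this flat, or an alternative configuration forcing nonbinarity.

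Consider first the easy subcase $|S| \geq 3$. Pick a third edge $e_5 \in S$ in an unbalancing class distinct from $\Sigma_3$ and $\Sigma_4$. All of $e_3, e_4, e_5$ survive in $\Om - w$, and every $3$-subset of $\{l, e_3, e_4, e_5\}$ is a circuit: the triples containing $l$ are tight handcuffs, while $\{e_3, e_4, e_5\}$ is a contrabalanced theta. Hence $\{l, e_3, e_4, e_5\}$ restricts to $U_{2,4}$ in $F(\Om - w)$, and $w$ is committed.

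For the subcase $|S| = 2$, the plan is to build a contrabalanced theta with branch vertices $u$ and $v$ in $\Om - w$, and then apply Lemma \ref{lem:odd_theta_and_unbal_cycle_U_24} using $l$ as an unbalanced cycle avoiding branch vertex $v$. The theta will consist of $e_3$, $e_4$, and a third $u$-$v$ path $P^*$ in $\Om - w$ whose first edge at $u$ lies in some $\Sigma_i$ with $i \notin \{3,4\}$ (this guarantees all three cycles of the theta are unbalanced). If some $u$-$v$ edge in $\Sigma_1 \cup \Sigma_2$ exists, we take it as $P^*$. Otherwise $\Sigma_1 \cup \Sigma_2 \subseteq X$; since $\Om$ is $3$-connected, $u$ has at least two neighbours in $W$, so we can choose an edge $f = u w_0 \in \Sigma_1 \cup \Sigma_2$ with $w_0 \neq w$. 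Because $\Om$ is $3$-connected and $|V| \geq 4$, the graph $\Om - u$ is $2$-connected and so $\Om - u - w$ remains connected, permitting us to extend $f$ along any $w_0$-$v$ path in $\Om - u - w$ to produce $P^*$.

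The chief obstacle is the subcase $|S|=2$, where we must guarantee for \emph{every} $w \in W$ that some third $u$-$v$ path exists whose initial edge lies outside $\Sigma_3 \cup \Sigma_4$. This is the step that uses the $3$-connectedness of $\Om$ most substantively: it both supplies a second neighbour of $u$ in $W$ (yielding an edge avoiding $w$) and ensures $\Om - u - w$ remains connected. Once $P^*$ is in hand, Lemma \ref{lem:odd_theta_and_unbal_cycle_U_24} delivers the required $U_{2,4}$-minor and the proof is complete.
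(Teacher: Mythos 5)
Your proof is correct and follows essentially the same route as the paper's: both reduce to Lemma \ref{lem:x_committed_iff_U24} and exhibit a $U_{2,4}$-minor of $F(\Om-x)$ assembled from the loop $l$, two edges of $S$, and (when needed) a surviving $u$-$v$ path. The paper obtains that path uniformly from the 2-connectedness of $\Om[X]$ (Lemma \ref{lem:Omis2conn}) rather than splitting on $|S|$, but the underlying configuration is the same.
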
 
\begin{proof}[Proof of Claim] 
Both $u$ and $v$ have at least two neighbours in $W$, and by Lemma \ref{lem:Omis2conn} $\Om$ is 2-connected. 
Hence for every $x \in W$, the biased subgraph $\Om[X]-x$ is connected.   
A $u$-$v$ path in $\Om[X] - x$ together with $l$ and two edges in $S$ yields a $U_{2,4}$ minor in $F(\Om - x)$.  
Thus by Lemma \ref{lem:x_committed_iff_U24} $x$ is committed. 
\end{proof}

Let $H=\Om[X]$.  
Then $H$ is a pinch. 
Since $W \eq V(H) \setminus \{u,v\}$ is nonempty, and all vertices of $H-\{u,v\}$ are committed, we may apply an $H$-reduction.  
The biased graph $\re(\Om,H)$ obtained by replacing $H$ with a pinched triangle $abc$ is the 2-vertex graph shown at right in Figure \ref{fig:anotherfigureofHes1}, so this completes the proof of Theorem \ref{mainthm:main_bal_vertex_rep} in this case. 
\begin{figure}[htbp] 
\begin{center} 
\includegraphics[scale=0.9]{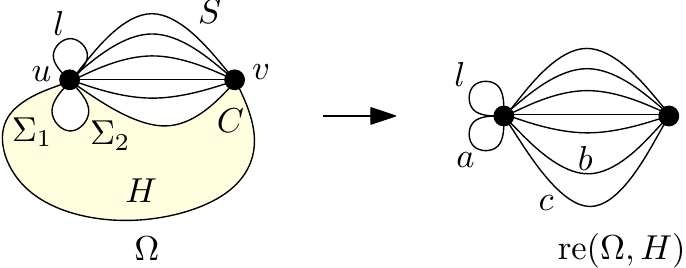}
\end{center} 
\caption{Applying an $H$-reduction in Case 1.} 
\label{fig:anotherfigureofHes1} 
\end{figure}

\subsubsection*{Subcase 2. There is just one $u$-$v$ edge not in $\Sigma_1 \cup \Sigma_2$.}
\mbox{} 
\smallskip

\noindent 
Now suppose $|S|=1$.   
Suppose further that each of $\Sigma_1$ and $\Sigma_2$ have size at least two.  
By Lemma \ref{lem:Omis2conn}, $\Om$ is 2-connected. 
Thus, regardless of the possibilities for the endpoints of edges in $\Sigma_1$ and $\Sigma_2$, for all $x \in W$ there remains a contrabalanced theta in $\Om-x$. 
Together with $l$ such a contrabalanced theta yields a $U_{2,4}$ minor in $F(\Om-x)$.  
Hence we again find, by Lemma \ref{lem:x_committed_iff_U24}, that all vertices in $W$ are committed. 
Again take $H = \Om[X]$.  
As in subcase 1, $H$ is a pinch and applying an $H$-reduction yields a 2-vertex contrabalanced biased graph.   

So suppose now $|\Sigma_1|=1$ while $|\Sigma_2|>1$ (Figure \ref{fig:G-u_bal_unbal_loop_2}).  
\begin{figure}[htbp] 
\begin{center} 
\includegraphics[scale=0.9]{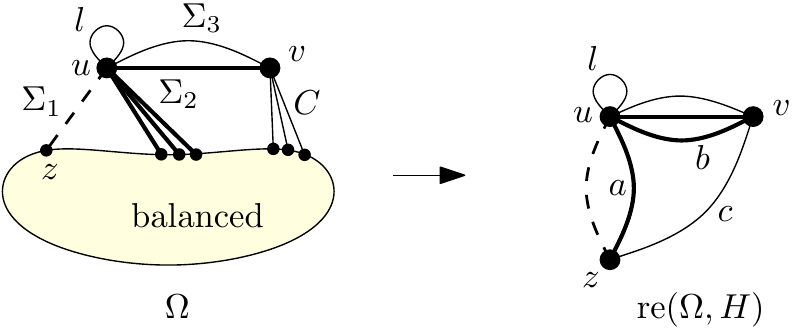}
\end{center} 
\caption[Case (a)ii. $\Om$ and $\re(\Om,H)$]{There is only one edge in $S$ and $|\Sigma_1|=1$ while $|\Sigma_1|>1$.}
\label{fig:G-u_bal_unbal_loop_2} 
\end{figure} 
Let $z \in W$ be the endpoint of the edge in $\Sigma_1$. 
Suppose first $z$ has at least two neighbours in $W$.  
Then for every $x \in W \setminus \{z\}$ there remains an countrabalanced theta in $\Om-x$, which together with $l$ yields a $U_{2,4}$ minor in $F(\Om-x)$. 
Thus every vertex $x \in W \setminus \{z\}$ is committed.  
Let $H$ be the balanced biased subgraph formed by $\Om[W]$ together with the edges in $\Sigma_2 \cap X$ and $C$.  
Replacing $H$ by a balanced triangle $abc$, we obtain a three-vertex biased graph $\re(\Om,H)$ (Figure \ref{fig:G-u_bal_unbal_loop_2} at right).  

Now suppose $z$ has just one neighbour $y$ in $W$.  
Since $F(\Om)$ is 3-connected, there is a second $uz$ edge, which is in $\Sigma_2$, and $y$ has at least two neighbours in $W$ distinct from $z$.  
Neither $y$ nor $z$ is committed. 
However, every vertex $x \in W \setminus \{y,z\}$ is committed, since for every such vertex $x$ there is a contrabalanced theta in $\Om-x$ which together with $l$ forms a $U_{2,4}$-minor. 
Let $H$ be the balanced biased subgraph obtained by deleting from $\Om$ the edges in $Y$ and the three edges incident to $z$.  
Applying an $H$-reduction yields $\re(\Om,H)$, a biased graph with four vertices. 

Finally, suppose there is only one edge in $S$ and that $|\Sigma_1|=|\Sigma_2|=1$ (Figure \ref{fig:G-u_bal_unbal_loop_3}).  
Let $z, w$ be the endpoints in $W$ of the single edge in $\Sigma_1$, $\Sigma_2$, respectively.  
\begin{figure}[htbp] 
\begin{center} 
\includegraphics[scale=0.9]{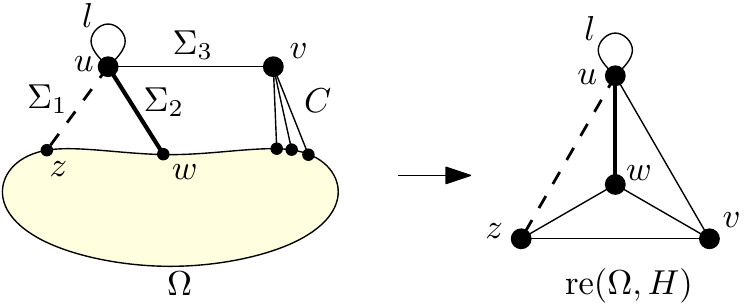}
\end{center} 
\caption{If $|S|=1$ and $|\Sigma_1|=|\Sigma_2|=1$.}
\label{fig:G-u_bal_unbal_loop_3} 
\end{figure} 
We claim that every vertex $x \in W \setminus \{w,z\}$ is committed.  
Connectivity implies that each of $w$ and $z$ have at least two neighbours in $W$.  
By Lemma \ref{lem:Omis2conn} for every $x \in W \setminus \{w,z\}$, $\Om-x$ is connected; moreover $\Om-x$ has an edge in both $\Sigma_1$ and $\Sigma_2$. 
Thus $\Om-x$ contains a countrabalanced theta.  
Together with $l$, this yields a $U_{2,4}$ minor in $F(\Om-x)$, establishing the claim.  
Hence $H=\Om-u$ is a balanced subgraph of $\Om$ having all vertices but $\{v,w,z\}$ committed.  
An $H$-reduction yields the 4-vertex biased graph shown at right in Figure \ref{fig:G-u_bal_unbal_loop_3}.  

This exhausts the possibilities for 3-connected biased graphs with a balancing vertex and an unbalanced loop.  

\subsection{$\Om$ has no unbalanced loop on $u$}

We now consider the case that there is no unbalanced loop incident to $u$. 

First observe that there are at most three unbalancing classes in $\delta(u)$ in $\Om-v$: otherwise since $\Om-v$ is connected, contracting all edges not incident to $u$ then deleting all but one edge in each of four unbalancing classes would yield a biased graph representing $U_{2,4}$, implying $v$ is committed, a contradiction.  

We consider several cases, according to the number of unbalancing classes of $\delta(u)$ in $\Om$ and in $\Om-v$, and their sizes. 
We consider the following three subcases, each of which is dealt with by considering further subcases: 
\begin{enumerate}[1.]
\item $\delta(u)$ has three unbalancing classes in $\Om-v$, and just three unbalancing classes in $\Om$; 
\item $\delta(u)$ has three unbalancing classes in $\Om-v$, and more than three unbalancing classes in $\Om$; 
\item $\delta(u)$ has less than three unbalancing classes in $\Om-v$.  
\end{enumerate} 

\subsubsection*{Subcase 1. $\delta(u)$ has 3 unbalancing classes in $\Om-v$, and just 3 unbalancing classes in $\Om$} \label{sec:3sim3sim} 
\mbox{} 
\smallskip 

\noindent 
A \emph{fat theta} is a biased graph that is the union of three subgraphs $A_1, A_2, A_3$ mutually meeting at just a single pair of vertices, in which a cycle $C$ is balanced \iiff $C \subseteq A_i$ for some $i \in \{1,2,3\}$ (Figure \ref{fig:A_fat_theta}).  
\begin{figure}[tbp] 
\begin{center} 
\includegraphics[scale=0.9]{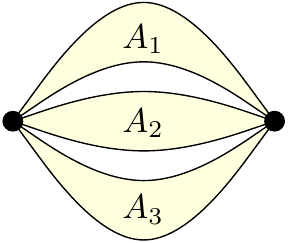}
\end{center} 
\caption{A fat theta.}
\label{fig:A_fat_theta} 
\end{figure} 

\begin{lem} \label{lem:G-v_isafattheta}
If there are three unbalancing classes of $\delta(u)$ in $\Om-v$, then $\Om-v$ is a fat theta.  
\end{lem}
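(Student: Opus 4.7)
The plan is to identify a second vertex $y \neq u$ so that $\Omega - v$ is the union of three subgraphs meeting exactly in $\{u,y\}$, realizing the fat-theta structure. The main leverage is Lemma \ref{lem:x_committed_iff_U24}: since $v$ is uncommitted, $F(\Omega - v)$ is binary, which combined with Lemma \ref{lem:odd_theta_and_unbal_cycle_U_24} sharply constrains where unbalanced cycles can live.

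First I construct a contrabalanced theta through $u$. Let $\Sigma_1, \Sigma_2, \Sigma_3$ be the three unbalancing classes of $\delta(u)$ in $\Omega - v$, and pick edges $e_i = u x_i \in \Sigma_i$. Since $\Omega$ is 3-connected, $\Omega - v$ is 2-connected and so $\Omega - v - u$ is connected; let $S$ be a tree in $\Omega - v - u$ containing $\{x_1, x_2, x_3\}$. Either one $x_i$ lies between the other two on $S$ or $S$ has an internal Steiner branch vertex; call this common meeting point $y$. Prepending $e_i$ to the $S$-path from $x_i$ to $y$ produces three internally disjoint $u$-$y$ paths $P_1, P_2, P_3$ in $\Omega - v$ forming a theta $T$. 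Every cycle of $T$ uses two $u$-edges from distinct unbalancing classes, so $T$ is contrabalanced.

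Next I pin the unbalanced cycles to $y$. Since $v$ is uncommitted, Lemma \ref{lem:x_committed_iff_U24} gives that $F(\Omega - v)$ is binary. If some unbalanced cycle $C$ of $\Omega - v$ avoided $y$, Lemma \ref{lem:odd_theta_and_unbal_cycle_U_24} applied to $T$ and $C$ would produce a $U_{2,4}$ minor of $F(\Omega - v)$, a contradiction. No unbalanced cycle of $\Omega - v$ avoids $u$ either, since $u$ is a balancing vertex. Hence every unbalanced cycle of $\Omega - v$ passes through both $u$ and $y$. Now let $K$ be any component of $\Omega - v - u - y$. The 3-connectivity of $\Omega$ forbids $\{v,y\}$ and $\{v,u\}$ from being vertex 2-cuts of $\Omega$, so $K$ has at least one edge to $u$ and at least one to $y$. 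If $K$ had edges $ux_1 \in \Sigma_i$ and $ux_2 \in \Sigma_j$ with $i \neq j$, a path in $K$ from $x_1$ to $x_2$ would close with these edges into a cycle containing $u$ but avoiding $y$, unbalanced because its two $u$-edges lie in different classes, contradicting the previous sentence. Hence all edges from $K$ to $u$ lie in a single class $\Sigma_{i(K)}$, assigning $K$ a type $i(K) \in \{1,2,3\}$.

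Finally I assemble the fat theta. For each $i \in \{1,2,3\}$, let $A_i$ be the subgraph on vertex set $\{u,y\} \cup \bigcup_{K :\, i(K) = i} V(K)$ consisting of all internal edges of such components $K$, all edges from such $K$ to $u$ or $y$, and every direct $u$-$y$ edge in $\Sigma_i$. The three $A_i$'s share only $\{u,y\}$ as vertices, and their edge sets partition $E(\Omega - v)$. A cycle avoiding $u$ lies in $\{y\} \cup V(K)$ for a single component $K$, hence in $A_{i(K)}$, and is balanced; a cycle through $u$ with both of its $u$-edges in $\Sigma_i$ is balanced and, by the single-type rule, stays in $A_i$; a cycle through $u$ with its two $u$-edges in distinct classes is unbalanced and uses edges from at least two $A_i$'s. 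This is exactly the fat-theta condition. The main technical point is the middle paragraph: the binarity of $F(\Omega - v)$ is what squeezes every unbalanced cycle through a common second branch vertex $y$, and without that step the clean three-way decomposition of $\Omega - v - u - y$ is unavailable.
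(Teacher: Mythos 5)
Your proof is correct, and it is exactly the argument the paper leaves implicit: the paper's own proof is a one-line citation of Lemmas \ref{lem:x_committed_iff_U24} and \ref{lem:shortcut_an_odd_theta_U_24}, whose intended content is precisely your chain (build a contrabalanced theta on $u$ and a second branch vertex $y$ from the three unbalancing classes, use the binarity of $F(\Om-v)$ to force every unbalanced cycle through $u$ and $y$, and read off the three-lobe decomposition). Your use of Lemma \ref{lem:odd_theta_and_unbal_cycle_U_24} in place of Lemma \ref{lem:shortcut_an_odd_theta_U_24} is immaterial, since the former is the paper's stated generalisation of the latter.
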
 

\begin{proof} 
This follows immediately from Lemmas \ref{lem:x_committed_iff_U24} and \ref{lem:shortcut_an_odd_theta_U_24}.   
\end{proof} 

\begin{lem} \label{lem:3simclassesatmostoneofsizeone}
At most one unbalancing class of $\delta(u)$ in $\Om$ has size one.  
\end{lem}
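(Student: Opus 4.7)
The plan is to argue by contradiction. Suppose two of the three classes are singletons, say $\Sigma_1 = \{e_1\}$ and $\Sigma_2 = \{e_2\}$; since $F(\Om)$ is nongraphic, $k \ge 3$ and so $\Sigma_3 = \delta(u) \setminus \{e_1,e_2\}$ is nonempty. I will derive a contradiction by showing that $\{e_1,e_2\}$ is a cocircuit of $F(\Om)$ of size two, which is incompatible with the $3$-connectivity of $F(\Om)$.

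The crux of the argument is to show that $\Om \setminus \{e_1,e_2\}$ is balanced. First, observe that $\Om$ contains no unbalanced loops at all: there is no loop at $u$ by the hypothesis of this subsection, and no unbalanced loop at any other vertex because $u$ is a balancing vertex. Consequently, any unbalanced cycle of $\Om$ must pass through $u$ and uses exactly two edges of $\delta(u)$. By Lemma~\ref{lem:ei_ej_with_balancing_vertex_have_same_parity_cycles}, such a cycle is balanced if and only if its two edges at $u$ lie in a common unbalancing class. Once $e_1$ and $e_2$ are deleted, every surviving edge of $\delta(u)$ lies in $\Sigma_3$, so every cycle through $u$ in $\Om \setminus \{e_1,e_2\}$ is balanced; cycles avoiding $u$ are balanced because $u$ is balancing.

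To close the argument, I would use that $3$-vertex-connectivity of $\Om$ implies $3$-edge-connectivity (since $\lambda(G) \ge \kappa(G)$), so $\Om \setminus \{e_1,e_2\}$ is connected and still spans $V(\Om)$. Being connected and balanced, the rank formula $r(X) = |V(X)| - b(X)$ gives $r(E \setminus \{e_1,e_2\}) = |V(\Om)| - 1$, while $r(F(\Om)) = |V(\Om)|$ since $\Om$ is connected and unbalanced. Hence $E \setminus \{e_1,e_2\}$ is a hyperplane of $F(\Om)$ and $\{e_1,e_2\}$ is a cocircuit of size two. This forces either a parallel pair (if $r(\{e_1,e_2\}) = 1$) or a proper $2$-separation of $F(\Om)$ with $X = \{e_1,e_2\}$, each contradicting its $3$-connectivity.

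The only real subtlety is the balance step: we need to rule out unbalanced loops anywhere in $\Om$ and then carefully invoke Lemma~\ref{lem:ei_ej_with_balancing_vertex_have_same_parity_cycles} to certify that every surviving cycle through $u$ is balanced. Once this is in place, the remainder is a direct rank computation from the standard frame-matroid formula.
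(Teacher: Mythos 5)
Your proof is correct, but it reaches the contradiction by a different mechanism than the paper. The paper classifies the circuits of $F(\Om)$ through the singleton edges: since $a\in\Sigma_1$ lies in no balanced cycle, any circuit containing it is a contrabalanced theta (which must use one edge from each of the three classes, hence $b$) or a pair of tight handcuffs at $u$ (which, with only three classes and two of them singletons, must also use $b$); symmetrically every circuit through $b$ uses $a$, so $\{a,b\}$ is a series pair and hence a $2$-element cocircuit. You instead show directly that $\Om\setminus\{e_1,e_2\}$ is balanced (all remaining edges of $\delta(u)$ lie in $\Sigma_3$, and there are no unbalanced loops), and read off from the rank formula that its complement contains a cocircuit of size at most two. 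Your route is shorter and avoids the case analysis of circuit types, at the mild cost of invoking $3$-connectivity of the graph $\Om$ to keep $\Om\setminus\{e_1,e_2\}$ connected and spanning (the paper's argument uses only $3$-connectivity of $F(\Om)$); note that here connectedness follows already because $G-u$ is connected and $\delta(u)$ retains an edge of $\Sigma_3$. Two small points of hygiene: the characterisation ``a cycle through $u$ is balanced iff its two edges at $u$ lie in a common unbalancing class'' is Lemma~\ref{obs:exists_equiv_rel_on_delta_u} rather than Lemma~\ref{lem:ei_ej_with_balancing_vertex_have_same_parity_cycles}; and strictly speaking $E\setminus\{e_1,e_2\}$ need only have rank $r(F(\Om))-1$, so its closure is the hyperplane --- but then the complementary cocircuit has size one or two, and either possibility contradicts $3$-connectivity, so your conclusion stands.
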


\begin{proof} 
Suppose for a contradiction that there are two unbalancing classes in $\Om$ of size one, say $\Sigma_1$ and $\Sigma_2$, with edge $a \in \Sigma_1$ and edge $b \in \Sigma_2$.  
Since $a$ is not in any balanced cycle, every circuit of $F(\Om)$ containing $a$ is either a countrabalanced theta or a pair of handcuffs.  
A countrabalanced theta must contain an edge from each of the three unbalancing classes, and so contains $b$.  
A pair of handcuffs contain two unbalanced cycles meeting at $u$.  
Since there are only three unbalancing classes, every pair of handcuffs contains both $a$ and $b$.  
Hence every circuit containing $a$ contains $b$.  
Similarly, every circuit containing $b$ contains $a$.  
Hence $\{a,b\}$ is a cocircuit of size 2.  
This contradicts the fact that $F(\Om)$ is 3-connected.  
\end{proof}

We consider two further subcases, according to whether or not $\Om$ has a unbalancing class of size one.  

\bigskip
\paragraph{\emph{1A.  $\Om$ has an unbalancing class of size 1}}

Suppose there is an unbalancing class of $\delta(u)$ of size one.  
If $|V(\Om)| \leq 3$ then we are done, 
so assume $|V(\Om)| \geq 4$. 
By Lemmas \ref{lem:G-v_isafattheta} and \ref{lem:3simclassesatmostoneofsizeone} $\Om$ has the form of one of the biased graphs shown in Figure \ref{fig:GB_with_bal_vertex_3simclasses}, 
where each of the biased subgraphs $H_1, H_2, H_3$ are balanced and connected.  
\begin{figure}[tbp] 
\begin{center} 
\includegraphics[scale=0.9]{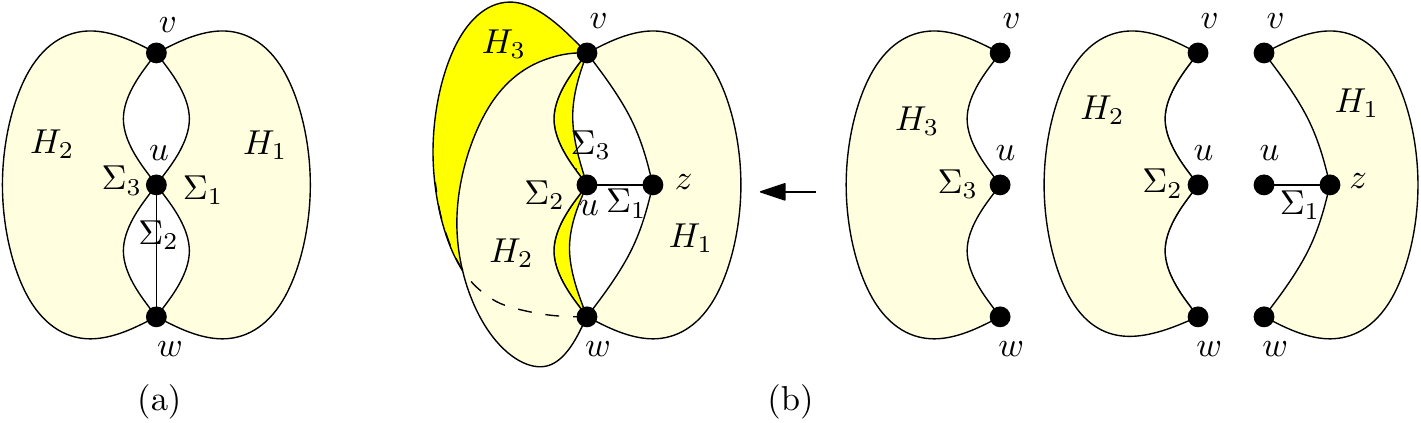}
\end{center} 
\caption[Case (b)i. A. $\Om$ has a unbalancing class of size one]{If $\Om$ has a unbalancing class of size one.  Biased graph (b) is obtained by identifying the vertices labelled $v$, those labelled $u$, and those labelled $w$ in each of graphs $H_1$, $H_2$, $H_3$ at right.}
\label{fig:GB_with_bal_vertex_3simclasses} 
\end{figure} 
Our next lemma tells us precisely which vertices of $\Om$ are committed in this case.   

\begin{lem}
Let $\Om$ be a 3-connected biased graph with $F(\Om)$ nongraphic and 3-connected, with a balancing vertex $u$ and a second uncommitted vertex $v \not= u$, with no loop incident to $u$.  Suppose there are exactly three unbalancing classes in $\Om$ and in $\Om-v$, and that $\Om$ has a unbalancing class of size one.  Then $\Om$ has the form of one of the biased graphs shown in Figure \ref{fig:GB_with_bal_vertex_3simclasses_uncommitted_vertices}, where all vertices $t \in V(\Om) \setminus \{u,v,w,x,y,z\}$ 
are committed.  
\end{lem}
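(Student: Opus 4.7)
The plan is to invoke Lemma \ref{lem:x_committed_iff_U24}: it suffices to show that for each $t \in V(\Om) \setminus \{u,v,w,x,y,z\}$ the matroid $F(\Om - t)$ contains a $U_{2,4}$ minor. By Lemmas \ref{lem:shortcut_an_odd_theta_U_24} and \ref{lem:odd_theta_and_unbal_cycle_U_24}, this reduces to exhibiting in $\Om - t$ either a contrabalanced theta with a shortcut, or a contrabalanced theta together with an unbalanced cycle avoiding one of its branch vertices.

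First I would set up the global structure. Lemma \ref{lem:G-v_isafattheta} says $\Om - v$ is a fat theta; write $\Om - v = A_1 \cup A_2 \cup A_3$, where the $A_i$ are balanced bags meeting only at the pair of branch vertices $u, w$, and each $A_i$ contains the edges of the unbalancing class $\Sigma_i$ of $\delta(u)$. By Lemma \ref{lem:3simclassesatmostoneofsizeone} we may assume $|\Sigma_1| = 1$, with $|\Sigma_2|, |\Sigma_3| \geq 2$; let $e$ be the unique edge of $\Sigma_1$, and let $x$ be its endpoint distinct from $u$. The vertices $y$ and $z$ arise as the further distinguished vertices forced by thinness of $A_2$ and $A_3$: when a bag collapses to a path of length two (or an analogously small structure), its interior vertex need not be committed, since removing it can leave the residual biased graph signed graphic, hence binary.

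The main argument then has three steps. Step 1: pick any $t \in V(\Om) \setminus \{u,v,w,x,y,z\}$ and show that after deleting $t$, each of the three bags $A_i$ still contains a $u$-$w$ path; this uses the 3-connectivity of $\Om$ together with the careful exclusion of the forced cut-like vertices from our list. Combining one path from each bag yields a contrabalanced theta $T_t$ in $\Om - t$ with branch vertices $u$ and $w$. Step 2: exploit the attachments of $v$ --- by 3-connectivity, $v$ has at least two neighbours in $\Om - t$ --- to obtain either a $v$-path that forms a shortcut of $T_t$, or, if both attachments land in the same bag $A_i$, an unbalanced cycle through $v$ that avoids at least one branch vertex of $T_t$. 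Step 3: apply Lemma \ref{lem:shortcut_an_odd_theta_U_24} or Lemma \ref{lem:odd_theta_and_unbal_cycle_U_24} to conclude that $F(\Om - t)$ has a $U_{2,4}$ minor, and then Lemma \ref{lem:x_committed_iff_U24} to conclude that $t$ is committed.

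The main obstacle will be matching the enumeration of distinguished vertices $x, y, z$ precisely to the configurations depicted in Figure \ref{fig:GB_with_bal_vertex_3simclasses_uncommitted_vertices}. One must distinguish the case where $e$ is itself a $u$-$w$ link from the case where $e$ ends at a proper interior vertex of $A_1$, and further break up according to whether $A_2$ and/or $A_3$ collapse to thin $u$-$w$ paths (contributing the vertices $y$, $z$) or retain enough edges to leave room to reroute. I expect the bulk of the work to be a short but careful case analysis verifying, for each listed configuration, both that the bags remain connected after deletion of any $t \notin \{u,v,w,x,y,z\}$ and that $v$ supplies the shortcut or companion unbalanced cycle needed to upgrade $T_t$ to a $U_{2,4}$ minor.
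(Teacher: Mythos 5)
Your high-level reduction is the same as the paper's: show $t$ is committed by exhibiting a $U_{2,4}$ minor in $F(\Om-t)$ and invoking Lemma \ref{lem:x_committed_iff_U24}. Step 1 (a contrabalanced theta surviving in $\Om-v-t$) is also in the spirit of what the paper does. But there are two genuine problems.

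First, your Step 2 fallback is false as stated. If both of $v$'s surviving attachments lie in the same bag $A_i$, the resulting cycle through $v$ lies in the lobe $H_i$ (the bag together with its edges to $v$), and every such cycle is \emph{balanced}: a cycle avoiding $u$ is balanced because $u$ is balancing, and a cycle through $u$ inside one lobe uses two edges of the same unbalancing class $\Sigma_i$. So you cannot hand Lemma \ref{lem:odd_theta_and_unbal_cycle_U_24} an unbalanced cycle this way. The only unbalanced cycles through $v$ travel from $u$ through one lobe to $v$ and back through a \emph{different} lobe to $u$; this is why the paper's argument is built on $u$-$v$ paths and $u$-$w$ paths inside two distinct lobes (four paths in case (a), or the $u$-$v$/$u$-$w$ alternatives within each lobe in case (b)), rather than on a shortcut of a $u$-$w$ theta. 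Whether such a second $u$-$v$ path survives the deletion of $t$ is exactly where the exceptional vertices come from, so this is not a repairable afterthought but the crux of the case analysis.

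Second, your identification of the exceptional vertices $x,y,z$ does not match what the configuration actually forces. They are not interior vertices of bags that "collapse to a path of length two": an uncommitted vertex $t\neq u,v$ here is a cut vertex of a lobe whose deletion destroys the relevant $u$-$w$ (or $u$-$v$) paths of that lobe, and 3-connectivity then forces $t$ to be the \emph{unique} neighbour of $v$ (or of $w$) inside that lobe. Such a vertex can sit inside an arbitrarily large lobe, so "thinness" is the wrong invariant. Since the lemma's conclusion is precisely the enumeration of these configurations (the six forms in Figure \ref{fig:GB_with_bal_vertex_3simclasses_uncommitted_vertices} in case (a), plus the case (b) form), deferring this as "matching the figure" leaves the substantive half of the statement unproved. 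Relatedly, in the bag containing the size-one class every $u$-$w$ path uses its single $\delta(u)$-edge, so its far endpoint always destroys that bag's contribution; you note this, but the same cut-vertex phenomenon in the large bags is what you would need to rule out for all $t$ outside the listed set.
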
 

\begin{figure}[htbp] 
\begin{center} 
\includegraphics[scale=0.9]{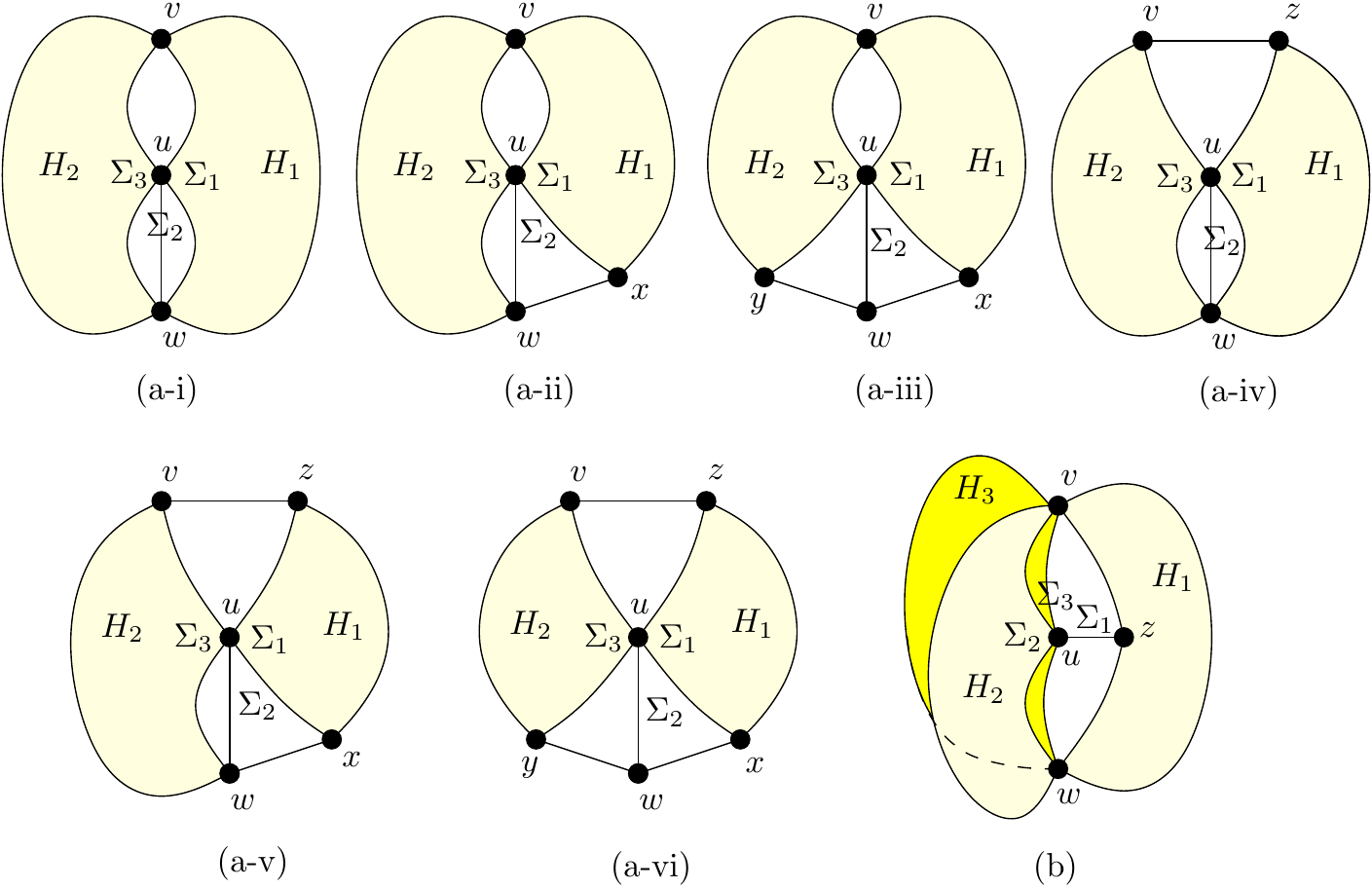}
\end{center} 
\caption[Case (b)i. A. Lobes when $\Om$ has a unbalancing class of size one]{Possibilities for lobes in $\Om$ in the case $\Om$ has a unbalancing class of size one.}
\label{fig:GB_with_bal_vertex_3simclasses_uncommitted_vertices} 
\end{figure} 

\begin{proof} 
Suppose first $\Om$ has the form of biased graph (a) in Figure \ref{fig:GB_with_bal_vertex_3simclasses}.  
Let $x \in V(\Om) \setminus \{u,v,w\}$.  
As long as in $\Om-x$ there are $u$-$w$ paths $P \subseteq H_1$ and $P' \subseteq H_2$, and $u$-$v$ paths $Q \subseteq H_1$ and $Q' \subseteq H_2$, there is a $U_{2,4}$ minor in $F(\Om-x)$, so $x$ is committed.  
Suppose $x \in V(H_1)$, say, is not committed.  
Then the deletion of $x$ must destroy either all $u$-$w$ paths in $H_1$ or all $u$-$v$ paths in $H_1$.  
That is, $x$ is a cut vertex in $H_1$.  
Connectivity (via Lemma \ref{lem:balancedsideof2sepa2sep}) implies now that either $x$ is incident to $v$ and there are no other vertices in $H_1$ incident to $v$, or that $x$ is incident to $w$ and there are no other vertices in $H_1$ incident to $w$.  
In other words, $\Om$ has the form of one of biased graphs (a-i)-(a-vi) in Figure \ref{fig:GB_with_bal_vertex_3simclasses_uncommitted_vertices}.  

Now suppose $\Om$ has the form of biased graph (b) in Figure \ref{fig:GB_with_bal_vertex_3simclasses}.  
Let $x \in V(H_1) \setminus \{u,v,w\}$.  
As long as in $\Om-x$ there is either a $u$-$w$ or a $u$-$v$ path contained in $H_1$, there is a $U_{2,4}$ minor in $F(\Om-x)$ so $x$ is committed.  
Hence if $x \in V(H_1)$ is not committed, the deletion of $x$ must destroy all such paths.  
Connectivity implies then that $x = z$.  
So suppose now $x \in V(H_2) \setminus \{u,v,w\}$.  
Again, as long as there is either a $u$-$v$ or a $u$-$w$ path in $H_2$ avoiding $x$, $x$ is committed.  
Hence if $x \in V(H_2)$ is not committed, there are no such paths in $H_2$ avoiding $x$.  
Connectivity now implies that $x$ is incident to $u$ and that there are no other vertices in $H_2$ incident to $u$.  
But this is a contradiction, as then both the $\Sigma_1$ and $\Sigma_2$ unbalancing classes of $\delta(u)$ are of size 1, contradicting Lemma \ref{lem:3simclassesatmostoneofsizeone}.  
Similarly, every vertex $x \in V(H_3) \setminus \{u,v,w\}$ is committed. 
\end{proof}

In any case, taking $H = \{H_1, H_2, H_3\}$ and applying an $H$-reduction, we obtain a biased graph $\re(\Om,H)$ on at most 6 vertices, completing the proof in this case.  

\bigskip  
\paragraph{\emph{1B.  Each unbalancing class has size greater than $1$}}
Now suppose that in $\Om$ each unbalancing class of $\delta(u)$ has size greater than one.  
Let $w$ be the second balancing vertex of the fat theta $\Om-v$.  
Together with their edges incident to $v$, the three balanced subgraphs of the fat theta $\Om-v$ are naturally extended to three lobes $H_1$, $H_2$, $H_3$ of $\Om$, which meet at $\{v,u,w\}$.  
Let $H = \{H_1, H_2, H_3\}$.  

\begin{claim} 
Each vertex $x \in V(H) \setminus \{u,v,w\}$ is committed.  
\end{claim}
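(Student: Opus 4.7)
The plan is to apply Lemma~\ref{lem:x_committed_iff_U24} and, for each $x\in V(H)\setminus\{u,v,w\}$, exhibit a $U_{2,4}$ minor in $F(\Om-x)$. By the symmetry of the three lobes $H_1,H_2,H_3$, it suffices to handle $x\in V(H_1)\setminus\{u,v,w\}$, so that the other two branches $A_2,A_3$ of the fat theta $\Om-v$ remain intact after deleting $x$.

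First I would locate a contrabalanced theta $T$ in $\Om-x$ with branch vertices $u$ and $w$, as follows. Each $A_i$ is balanced, connected, and contains both $u$ and $w$, so I pick internally disjoint $u$--$w$ paths $P_2\subseteq A_2$ and $P_3\subseteq A_3$; their union is an unbalanced cycle since the two edges incident to $u$ lie in the distinct unbalancing classes $\Sigma_2$ and $\Sigma_3$ (Lemma~\ref{obs:exists_equiv_rel_on_delta_u}). For a third internally disjoint $u$--$w$ path $P_1$ whose edge at $u$ lies in $\Sigma_1$: if $A_1-x$ still contains such a path, take it. Otherwise $x$ separates $u$ from $w$ in $A_1$, and I would construct $P_1$ through $v$, using either a $u$--$v$ edge lying in $\Sigma_1$ or a class-$\Sigma_1$ edge of $A_1$ at $u$ continued into $v$'s neighborhood. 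The hypothesis $|\Sigma_1|\ge 2$ combined with $3$-connectivity of $\Om$ --- which forces $v$ to have a neighbor in every component of each $A_j-\{u,w\}$, since $\Om-\{u,w\}$ must be connected --- guarantees that such a routing exists and can be made internally disjoint from $P_2$ and $P_3$.

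With $T=P_1\cup P_2\cup P_3$ in place, I would finish by producing either a shortcut of $T$, invoking Lemma~\ref{lem:shortcut_an_odd_theta_U_24}, or an unbalanced cycle of $\Om-x$ avoiding one of its branch vertices, invoking Lemma~\ref{lem:odd_theta_and_unbal_cycle_U_24}. Since $u$ is the balancing vertex of $\Om$, every unbalanced cycle must pass through $u$, so in the second route the cycle must avoid $w$. The vertex $v$ supplies the required extra structure: if $v$ has neighbors in the interiors of two different branches $A_j$ and $A_k$, a path through $v$ joining those two neighbors is a shortcut of $T$; if instead $v$'s neighbors outside $\{u,w\}$ all lie in a single branch $A_i$, then a $u$--$v$ edge in a class $\Sigma_j$ with $j\ne i$ --- which exists by the assumption $|\Sigma_j|\ge 2$ together with a short argument blocking 2-cuts of $\Om$ --- can be combined with a $v$--$u$ path through $A_i$ to yield an unbalanced cycle avoiding $w$.

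The hard part will be the bookkeeping in this last step: verifying that in every distribution of $v$'s $\ge 3$ incident edges among $\{u,w\}\cup\bigcup_i(V(A_i)\setminus\{u,w\})$ consistent with $3$-connectivity of $\Om$ and the size condition $|\Sigma_i|\ge 2$, at least one of the two configurations (shortcut of $T$, or an unbalanced cycle of $\Om-x$ avoiding $w$) actually materialises and can simultaneously be chosen internally disjoint from $T$ and avoiding $x$. The workhorse throughout is connectivity of $\Om-\{u,w\}$, which tightly constrains where $v$'s neighbors must lie and rules out the pathological distributions.
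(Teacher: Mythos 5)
Your high-level strategy is the same as the paper's: invoke Lemma \ref{lem:x_committed_iff_U24} and exhibit a $U_{2,4}$ minor in $F(\Om-x)$ built from a contrabalanced theta together with extra unbalanced structure supplied by $v$. Your first case ($A_1-x$ still contains a $u$--$w$ path) is essentially the paper's main construction, and your observation that $3$-connectivity forces $v$ to have a neighbour in every component of every $A_j-\{u,w\}$ is correct and useful. The gap is in your second case.

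When $x$ separates $u$ from $w$ in $A_1$, you keep $u$ and $w$ as branch vertices and route $P_1$ through $v$. This cannot be repaired as stated. The $v$-to-$w$ leg of $P_1$ need not admit a routing internally disjoint from $P_2$ and $P_3$: if $v$ is not adjacent to $w$ and $w$'s only neighbour in $A_1$ is $x$, then $v$ reaches $w$ only through the interiors of $A_2$ or $A_3$, and when (say) $A_2$ is the path $u\,a\,w$ the leg is forced through $a$, the unique interior vertex of $P_2$. Indeed, in such a configuration $w$ has only two surviving incident edges in $\Om-x$, so \emph{no} theta with branch vertices $u,w$ exists there at all --- the choice of branch vertices, not just the routing, is wrong. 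Moreover, even when a routing through $v$ does exist, your finishing step uses $v$ to supply the shortcut or the extra unbalanced cycle, which is unavailable once $v$ is an internal vertex of $T$; you give no alternative source for the fourth element, and your closing paragraph explicitly defers exactly this verification. The paper sidesteps all of it with one observation you are missing: by $3$-connectivity, $H_1-x$ must retain either a $u$--$w$ path avoiding $v$ or a $u$--$v$ path avoiding $w$ (otherwise the component of $H_1-x$ containing $u$ is cut off by $\{u,x\}$). In the second alternative one swaps the roles of $v$ and $w$: three internally disjoint $u$--$v$ paths, one per lobe, all avoiding $w$ and $x$, form the contrabalanced theta, and two paths from $w$ into the second and third branches supply the unbalanced cycle avoiding the branch vertex $v$. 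Restructure your case split around that dichotomy rather than around whether $A_1-x$ connects $u$ to $w$.
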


\begin{proof}
Let $x \in V(\Om) \setminus \{u,v,w\}$; say $x \in V(H_1)$.  
We claim that in $H_1 - x$ there is either a $u$-$v$ path avoiding $w$ or a $u$-$w$ path avoiding $v$.  
For suppose not: then $x$ is a cut vertex of $H_1$ separating $u$ from $\{v,w\}$.  
Since $u$ has at least two neighbours in $H_1$, $\{u,x\}$ determines a 2-separation of $G$, a contradiction.  
So suppose $P$ is a $u$-$w$ path in $H_1-x$ avoiding $v$.  
Since $v$ is not a cut vertex of $H_2$ or $H_3$, there are $u$-$w$ paths $P'$ and $P''$ avoiding $v$ in $H_2$ and $H_3$, respectively.  
Let $Q'$ be a $P'$-$v$ path in $H_2-w$, and $Q''$ be a $P''$-$v$ path in $H_3-w$ (such paths exist, since $w$ is not a cut vertex of $H_2$ nor $H_3$).  
Contracting all edges of $P, P'$, and $P''$ but those incident to $w$, and all edges of $Q'$, and all edges of $Q''$ but its edge incident to $v$ yields a biased graph representing $U_{2,4}$ as a minor of $\Om-x$.  
\end{proof}

Applying an $H$-reduction yields a biased graph on three vertices.  

\subsubsection*{Subcase 2. $\delta(u)$ has 3 unbalancing classes in $\Om-v$ and greater than $3$ unbalancing  classes in $\Om$} \label{sec:3sim4sim} 
\mbox{} 
\smallskip 

\noindent To aid the analysis, we now slightly generalise our concept of a lobe: the \emph{lobes} of $\Om$ are the three balanced biased subgraphs $H_1, H_2, H_3$ of $\Om$ meeting at $\{u,v,w\}$, each of which is obtained from one of the three balanced subgraphs $A_1, A_2, A_3$ whose union is the fat theta $\Om-v$, by adding all edges linking $v$ and a vertex in $A_i$ ($i \in \{1,2,3\}$).  
By the theta property, each $H_i$ must indeed be balanced. 
Call a lobe \emph{degenerate} if it contains only 1 edge.  
If all three lobes are degenerate then $|V(\Om)|=3$ and we are done. So assume $\Om$ has at least one non-degenerate lobe. 
The following four further subcases exhaust the possibilities in the case $\delta(u)$ has three unbalancing classes in $\Om-v$ and more than three unbalancing classes in $\Om$:  
\begin{itemize} 
\item[2A.]  Just 4 unbalancing classes in $\delta(u)$, no degenerate lobes; 
\item[2B.]  Just 4 unbalancing classes in $\delta(u)$, exactly two degenerate lobes; 
\item[2C.]  Just 4 unbalancing classes in $\delta(u)$, exactly one degenerate lobe; 
\item[2D.]  More than 4 unbalancing classes in $\delta(u)$.  
\end{itemize} 
In any case the fact that $F(\Om)$ is 3-connected forces a unbalancing class present in $\Om$ but not $\Om-v$ to be of size 1.  

\bigskip  
\paragraph{\emph{2A. $\Om$ has exactly 4 unbalancing classes, no degenerate lobes}} \label{section:G9}

Suppose there are exactly four unbalancing classes in $\delta(u)$ in $\Om$. 
By Lemma \ref{lem:G-v_isafattheta} and the theta property, $\Om$ has the form shown shown at left in Figure \ref{fig:G-u_bal_3classes_4classes_in_G}, where 
for each $i \in \{1,2,3\}$, all edges in lobe $H_i$ incident to $u$ are in unbalancing class $\Sigma_i$, and 
$\Sigma_4$ is the unbalancing class of $\delta(u)$ not present in $\Om-v$ consisting of a single edge. 
\begin{figure}[htbp] 
\begin{center} 
\includegraphics[scale=.9]{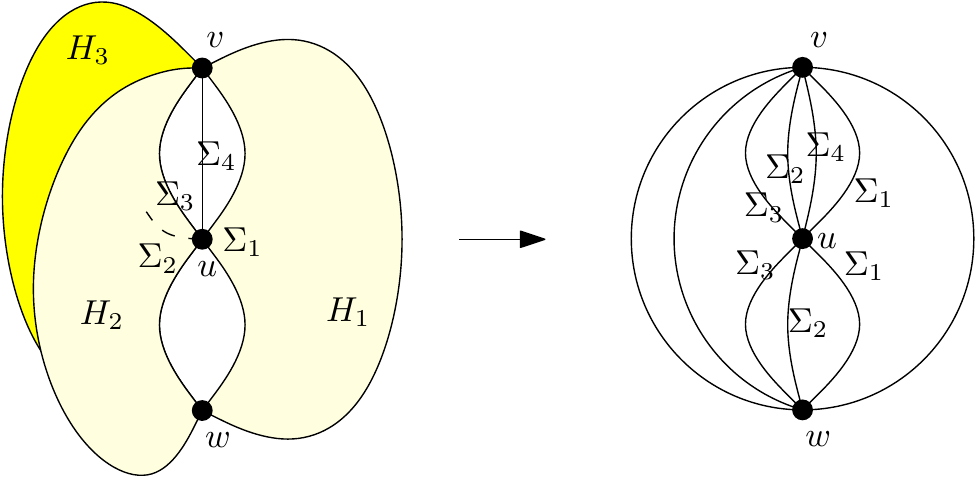}
\end{center} 
\caption{If all vertices but $u, v$ are committed: $\Om$ and $\re(\Om,H)$}
\label{fig:G-u_bal_3classes_4classes_in_G} 
\end{figure} 

\begin{claim} 
Every vertex but $u$ and $v$ is committed. 
\end{claim}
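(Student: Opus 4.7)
The strategy is to verify, for each vertex $x \in V(\Om) \setminus \{u,v\}$, that $F(\Om-x)$ is nonbinary, whence by Lemma~\ref{lem:x_committed_iff_U24} $x$ is committed. I will exhibit a $U_{2,4}$-minor of $F(\Om-x)$ by producing a contrabalanced theta in $\Om-x$ together with either a shortcut or an unbalanced cycle avoiding one of its branch vertices, so that Lemma~\ref{lem:shortcut_an_odd_theta_U_24} or Lemma~\ref{lem:odd_theta_and_unbal_cycle_U_24} applies. Two natural families of contrabalanced thetas live in $\Om$: the fat theta of $\Om-v$ with branch vertices $u,w$ using $u$-$w$ paths through $A_1, A_2, A_3$, and ``long'' thetas with branch vertices $u,v$ using the edge $f \in \Sigma_4$ together with $u$-$v$ paths in two distinct lobes. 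I will choose one depending on the location of $x$.

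First suppose $x$ is interior to some lobe, say $x \in V(H_1) \setminus \{u,v,w\}$. The lobes $H_2, H_3$ are entirely untouched in $\Om-x$. Using $3$-connectivity of $\Om$ together with the standing no-balanced-$2$-cycle convention (which prevents two lobes from both having the shape $u$-$w$-$v$ with parallel $vw$ edges), I will select $u$-$v$ paths $\pi_2 \subseteq H_2$ and $\pi_3 \subseteq H_3$, internally disjoint, with $\pi_i$ using an edge of $\Sigma_i$ at $u$. These, together with $f \in \Sigma_4$, form a contrabalanced theta in $\Om-x$ with branch vertices $u,v$. From the fat-theta structure of $\Om-v$ I take a $u$-$w$ path in $A_2$ concatenated with a $u$-$w$ path in $A_3$; this is an unbalanced cycle lying in $\Om-x-v$, so it avoids the branch vertex $v$, and Lemma~\ref{lem:odd_theta_and_unbal_cycle_U_24} finishes this case.

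The remaining case is $x=w$, which destroys the fat theta, but the edge $f$ and the lobes' residual structure still suffice to produce a contrabalanced theta with branch vertices $u,v$. The no-balanced-$2$-cycle convention again forbids two lobes from both being $\{uw,vw\}$, so at least two of $\Sigma_1, \Sigma_2, \Sigma_3$ survive in $\Om-w$; by $3$-connectivity each surviving $\Sigma_i$ extends to a $u$-$v$ path $\pi_i \subseteq H_i - w$. In the generic situation where every lobe has an interior vertex all four classes $\Sigma_1, \Sigma_2, \Sigma_3, \Sigma_4$ survive in $\Om-w$, giving four internally disjoint $u$-$v$ paths; contracting everything off $u$ then delivers $U_{2,4}$ directly.

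The main obstacle will be the residual subcase of $x=w$ in which a single lobe has the form $\{uw,vw\}$, so that only three unbalancing classes survive in $\Om-w$ and the direct contraction argument fails. To close the gap I will need either a shortcut of the theta $(f,\pi_i,\pi_j)$ or an unbalanced cycle in $\Om-w$ avoiding $v$; this will rely on a careful analysis of the interior structure of the two surviving lobes under the constraints imposed by $3$-connectivity of $\Om$ and, if necessary, on replacing the theta with a different triple of internally disjoint $u$-$v$ paths that is better adapted to the degenerate lobe.
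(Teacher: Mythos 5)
Your treatment of a vertex $x$ interior to a lobe is correct and is essentially the paper's argument in different clothing: the paper uses the contrabalanced theta $(f,P,P')$ with $P\subseteq H_2$, $P'\subseteq H_3$ together with a shortcut through $w$ and Lemma \ref{lem:shortcut_an_odd_theta_U_24}, whereas you use the same theta together with the unbalanced cycle of the fat theta through $u$ and $w$ (which avoids the branch vertex $v$) and Lemma \ref{lem:odd_theta_and_unbal_cycle_U_24}. Your extra care about internal disjointness of $\pi_2,\pi_3$ when one lobe is forced through $w$ is warranted and is something the paper glosses over.

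The genuine gap is the subcase you flag and do not close: $x=w$ when some lobe is the two-edge path $\{uw,wv\}$. Be aware that this gap cannot be closed, because the claim is false there. Take $\Om$ to be $K_5$ minus the edge $ab$ on vertices $\{u,v,w,a,b\}$, with $u$ balancing and the four edges $uw,ua,ub,uv$ in four distinct unbalancing classes (so a cycle is balanced if and only if it avoids $u$). This is a $3$-connected instance of Case 2A with lobes $H_1=\{uw,wv\}$, $H_2=\{ua,aw,av\}$, $H_3=\{ub,bw,bv\}$ and extra edge $f=uv$, with $F(\Om)$ nongraphic and $3$-connected and $v$ uncommitted ($F(\Om-v)\cong U_{4,5}$). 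Yet $\Om-w$ is the contrabalanced graph on $\{u,v,a,b\}$ whose only circuit is the $5$-element theta, so $F(\Om-w)\cong U_{4,5}\cong M(C_5)$ is graphic and $w$ is \emph{not} committed. The paper's own proof excludes this configuration with the assertion that ``connectivity implies $w$ is not a cut vertex in any of $H_1,H_2,H_3$,'' which fails precisely when a lobe is the path $u$-$w$-$v$ (such a lobe is not ``degenerate'' under the paper's one-edge definition). The theorem itself is not endangered: a two-edge lobe has no interior vertices, so it is never included in the collection $H$ being reduced, and reducing the remaining lobes still yields a biased graph on at most three vertices. But as a proof of the claim as literally stated, your attempt is incomplete by your own admission, and the paper's is incorrect at the same spot; the right fix is to weaken the claim to say that every vertex interior to a lobe is committed, and that $w$ is committed unless some lobe is the path $u$-$w$-$v$.
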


\begin{proof} 
Connectivity implies $w$ is not a cut vertex in any of $H_1, H_2, H_3$. 
Hence there are $u$-$v$ paths in each of $H_1, H_2, H_3$ avoiding $w$.  
Together with the $u$-$v$ edge in $\Sigma_4$, these yield a $U_{2,4}$ minor, so $w$ is committed.  
Now consider $x \in V(\Om) \setminus \{u,v,w\}$.  
Suppose \wolog{} $x \in H_1$.  
Choose $u$-$v$ paths $P \subseteq H_2$, $P' \subseteq H_3$ avoiding $w$, a $P$-$w$ path $Q \subseteq H_2$ avoiding $v$, and a $P'-w$ path $Q' \subseteq H_3$ avoiding $v$.  
Together with the $uv$ edge in $\Sigma_4$, these yield a $U_{2,4}$ minor in $F(\Om-x)$, so $x$ is committed.  
\end{proof}

Taking $H = \{H_1, H_2, H_3\}$, an $H$-reduction yields a biased graph on three vertices (Figure \ref{fig:G-u_bal_3classes_4classes_in_G}).  

\bigskip 
\paragraph{\emph{{2B. $\Om$ has exactly 4 unbalancing classes and exactly 2  degenerate lobes}}}
Suppose there is just one edge in $\Sigma_2$ and just one edge in $\Sigma_3$.  
Let $x \in V(\Om) \setminus \{u,v,w\}$.  
Then $x$ is committed unless the deletion of $x$ destroys either all $u$-$w$ paths or all $u$-$v$ paths.  
Hence connectivity implies $\Om$ has the form of one of the biased graphs shown in Figure \ref{fig:G-u_bal_4classes_degenerate_lobes}, where $|E(H_1)| \geq 3$ and every vertex $z \in V(\Om) \setminus \{u,v,w,x,y\}$ is committed.  
\begin{figure}[htbp] 
\begin{center} 
\includegraphics[scale=.9]{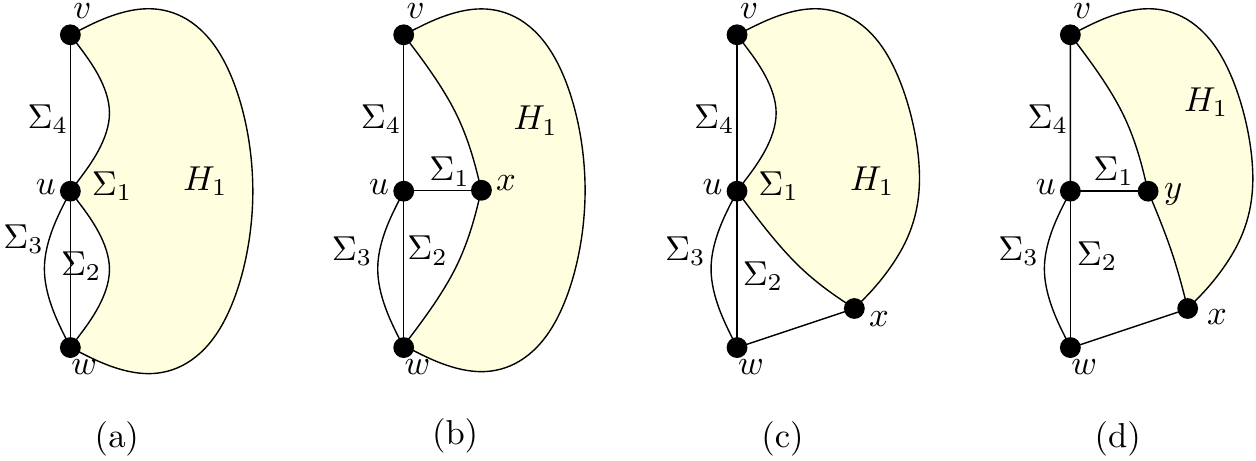}
\end{center} 
\caption[Case (b)ii.B]{Case 2B. All vertices except $u,v,w,x,y$ are committed.}
\label{fig:G-u_bal_4classes_degenerate_lobes} 
\end{figure} 
Applying an $H_1$-reduction, we obtain a biased graph on at most five vertices.

\smallskip 
\paragraph{\emph{2C.  $\Om$ has exactly 4 unbalancing classes and exactly 1 degenerate lobe}}
In this case $\Om$ is as shown in Figure \ref{fig:G-u_bal_4classes_1_degenerate_lobe}(a).  
\begin{figure}[htbp] 
\begin{center} 
\includegraphics[scale=.9]{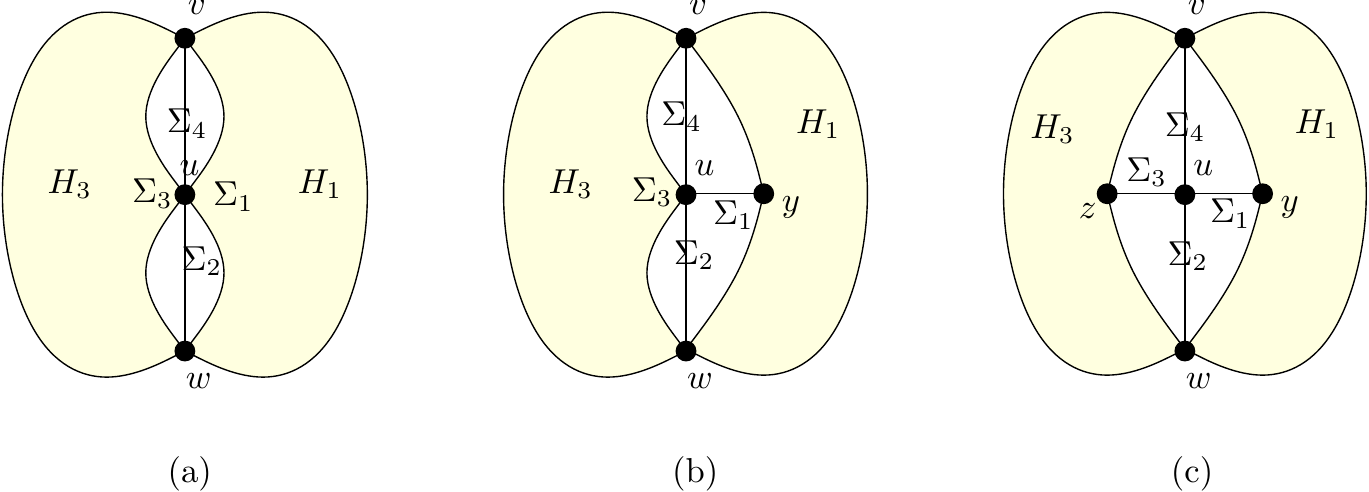}
\end{center} 
\caption[Case (b)ii.C]{Case 2C. $\Om$ has exactly four balancing glasses and exactly one degenerate lobe.}
\label{fig:G-u_bal_4classes_1_degenerate_lobe} 
\end{figure} 
Let $x \in V(\Om) \setminus \{u,v,w\}$.  
It is easy to see that $x$ is committed unless the deletion of $x$ destroys both all $u$-$v$ and all $u$-$w$ paths.  
Hence $\Om$ has the form of biased graph (a), (b), or (c) of Figure \ref{fig:G-u_bal_4classes_1_degenerate_lobe}, where all vertices $x \notin \{u,v,w,y,z\}$ are committed.  
Applying an $\{H_1,H_2\}$-reduction replaces each lobe $H_1$, $H_2$ with a balanced triangle, and we obtain a biased graph on at most five vertices.

\bigskip
\paragraph{\emph{2D. $\Om$ has $>4$ unbalancing classes}}
Suppose now $\Om$ has more than four unbalancing classes in $\delta(u)$.  
Then $\Om$ has the form shown at left in Figure \ref{fig:G-u_bal_3classes_5classes_in_G}.  
\begin{figure}[htbp] 
\begin{center} 
\includegraphics[scale=0.9]{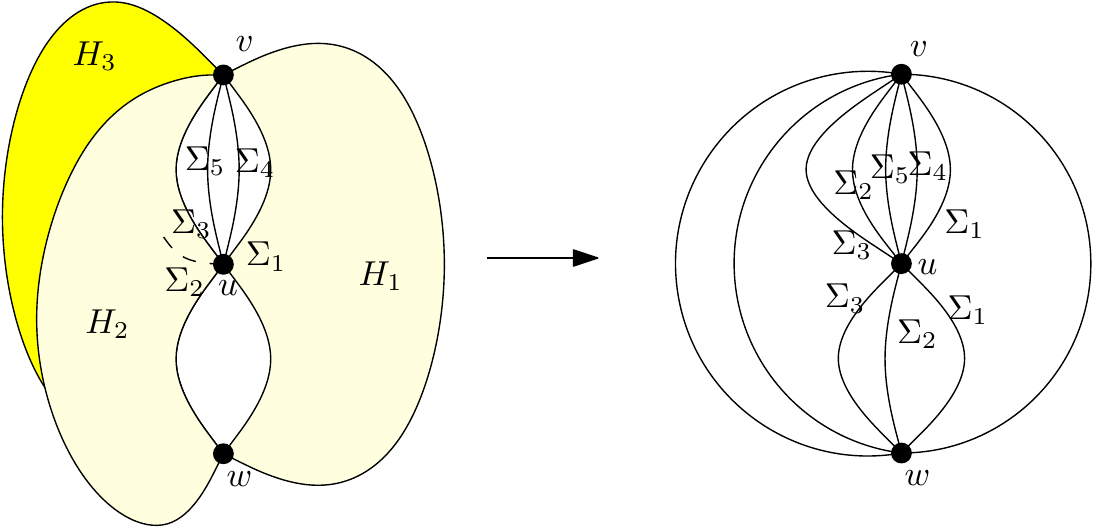}
\end{center} 
\caption[Case (b)ii.D $\re(\Om,H)=G_{17}$]{Case 2D. $\Om$ and $\re(\Om,H)$.}
\label{fig:G-u_bal_3classes_5classes_in_G} 
\end{figure} 
It may be that $\Om$ has none, one, or two degenerate lobes. 

Suppose first that none of the lobes $H_1$, $H_2$, $H_3$ is degenerate.  
Then there is a $u$-$v$ path avoiding $w$ in each of $H_1$ and $H_2$, and so $F(G-w)$ has a $U_{2,4}$ minor, so $w$ is committed.  
Let $x \in V(\Om) \setminus \{u,v,w\}$.  Since in each of the two lobes not containing $x$ there is both a $u$-$v$ path and a $u$-$w$ path, we find a $U_{2,4}$ minor in $F(\Om-x)$, so $x$ is committed.  
Applying an $\{H_1, H_2, H_3\}$-reduction, we obtain a biased graph on three vertices.  

Suppose now $\Om$ has one degenerate lobe.  
Suppose lobe $H_2$ has size one, both $|H_1|, |H_3| > 1$.  
Let $x \in V(\Om) \setminus \{u,v\}$.  
If $x = w$, choose in $G-w$ a $u$-$v$ path in $H_1$ and a $u$-$v$ path in $H_3$: we find a $U_{2,4}$ minor in $F(G-w)$.  
If $x \not= w$, choose a $u$-$v$ and a $u$-$w$ path in the lobe not containing $x$: we thus find a $U_{2,4}$ minor in $F(\Om-x)$.  
Thus every vertex $x \in V(\Om) \setminus \{u,v\}$ is committed; the biased graph $\re(\{H_1, H_3\},\Om)$ has three vertices.  

Suppose now $\Om$ has two degenerate lobes, $H_2$ and $H_3$, with $|H_1| > 1$ (Figure \ref{fig:G-u_bal_5classes_2_degen_lobes}).  
\begin{figure}[htbp] 
\begin{center} 
\includegraphics[scale=0.9]{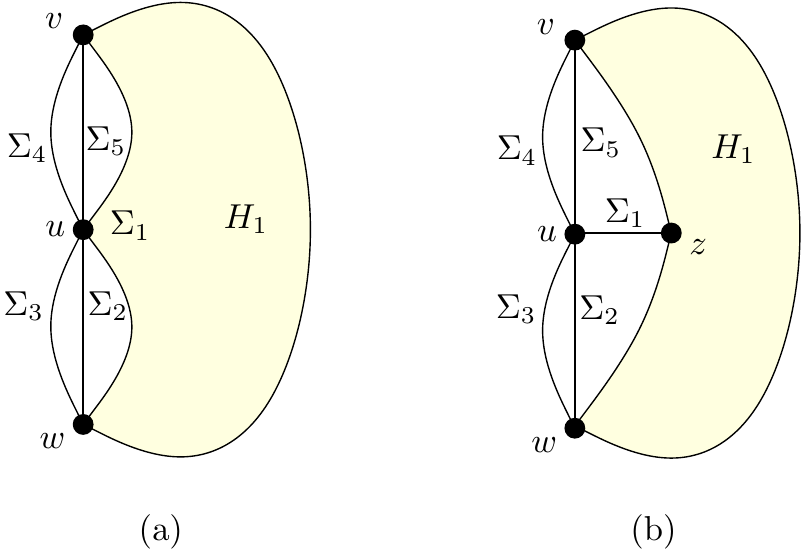}
\end{center} 
\caption[Case (b)ii.D]{$\Om$ has two degenerate lobes.}
\label{fig:G-u_bal_5classes_2_degen_lobes} 
\end{figure} 
Let $x \in V(\Om) \setminus \{u,v,w\}$.  
Unless the deletion of $x$ destroys both all $u$-$v$ and all $u$-$w$ paths, there is a $U_{2,4}$ minor in $F(\Om-x)$.  
Hence $\Om$ has the form of one of biased graphs (a) or (b) in Figure \ref{fig:G-u_bal_5classes_2_degen_lobes} where all vertices $x \in V(H_1) \setminus \{u,v,w,x\}$ are committed.  
The biased graph $\re(H_1,\Om)$ has at most four vertices.

\subsubsection*{Subcase 3. $\delta(u)$ has less than $3$ unbalancing classes in $\Om-v$}  \label{sec:2sim} 
\mbox{}
\smallskip

\noindent 
We may assume that $\Om$ does not have an uncommitted vertex $z$ leaving three unbalancing classes in $\delta(u)$ in $G-z$, since we have dealt with this case in the previous section.  
We consider two further subcases, depending on the number of unbalancing classes in $\delta(u)$ in $\Om-v$.

\bigskip 
\paragraph{\emph{3A. There is just 1 unbalancing class of $\delta(u)$ in $\Om-v$}}
In this case, $v$ is also balancing.  
Moreover, there must be at least four unbalancing classes in $\Om$, else $F(\Om)$ would be graphic.  
Hence there are at least three $u$-$v$ edges each of which is contained in a distinct unbalancing classes.  
But taking $X$ to be the set of edges not linking $u$ and $v$, the partition $(X, E \setminus X)$ is a 2-separation of $F(\Om)$ by Lemma \ref{lem:balancedsideof2sepa2sep}, a contradiction.  
Hence this case does not occur.   

\bigskip 
\paragraph{\emph{3B. There are 2 unbalancing class in $\Om-v$.}  \label{sec:2classes_in_G-v}}
Let $\Sigma_1, \Sigma_2$ be the unbalancing classes of $\delta(u)$ in $\Om-v$.  
Let $S = \delta(u) \cap \delta(v) \setminus \br{\Sigma_1 \cup \Sigma_2}$.  
Since $F(\Om)$ is nongraphic, there is at least one $u$-$v$ edge in a unbalancing class distinct from $\Sigma_1$ and $\Sigma_2$, so $S$ is nonempty (Figure \ref{fig:anotherfigureofHes22}).
Let $X = E \setminus \{ e : e$ is a $uv$ edge$\}$.  
Let $H = \Om[X]$.  
Let $W = V \setminus \{u,v\}$. 

\begin{figure}[htbp] 
\begin{center} 
\includegraphics[scale=.9]{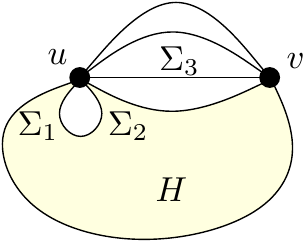}
\end{center} 
\caption{Case 3B: Two unbalancing classes in $\Om-v$.}
\label{fig:anotherfigureofHes22} 
\end{figure} 
\begin{claim} 
Every vertex $x \in W$ is committed.  
\end{claim}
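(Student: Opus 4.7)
The plan is to show that $F(\Om-x)$ is nonbinary by exhibiting a $U_{2,4}$ minor; then Lemma \ref{lem:x_committed_iff_U24} yields that $x$ is committed. Because $S \neq \emptyset$, there is an unbalancing class $\Sigma_3$ of $\delta(u)$ in $\Om$ containing an edge $e_3 \in S$; together with (the $\Om$-extensions of) $\Sigma_1$ and $\Sigma_2$, this gives at least three distinct unbalancing classes at $u$. Since $\Om$ has no balanced $2$-cycle, distinct $uv$ edges lie in distinct unbalancing classes in $\Om$, and each $\Sigma_i$ $(i=1,2)$ contains at most one edge incident to $x$.

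First I would fix $e_3 \in S$ and choose representatives $e_1 \in \Sigma_1$, $e_2 \in \Sigma_2$ whose second endpoints $y_1, y_2$ lie in $W \setminus \{x\}$; this is possible provided $|\Sigma_i| \geq 2$ or the unique $\Sigma_i$-edge is not $ux$. Using $2$-connectivity of $\Om - x$ (implied by $3$-connectivity of $\Om$), I would extend $e_1, e_2$ to $u$-$v$ paths $P_1, P_2$ in $\Om - x$ via paths through the connected graph $\Om - x - u$. Contracting a Steiner subtree of $\Om - x - u$ joining $y_1$, $y_2$, and $v$ to a single vertex $v^*$, then deleting all remaining edges outside $\{e_1, e_2, e_3\}$, produces a two-vertex minor of $\Om-x$ with three parallel $uv^*$ edges lying in three distinct unbalancing classes of $\Om$; by Lemma \ref{lem:ei_ej_with_balancing_vertex_have_same_parity_cycles}, every resulting $2$-cycle is unbalanced, so this minor is a contrabalanced theta representing $U_{2,3}$.

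To upgrade to $U_{2,4}$ I would produce a fourth element lying in a fourth unbalancing class. If $|S| \geq 2$, a second $uv$ edge $e_4 \in S$ suffices, since distinct $uv$ edges are in distinct classes; retaining $e_4$ through the above contraction yields four parallel $uv^*$ edges in four distinct classes, hence a contrabalanced biased graph on two vertices, i.e.\ $U_{2,4}$. If $|S| = 1$, so that $\Om$ has exactly three unbalancing classes at $u$, I would instead build a shortcut of the contrabalanced theta $P_1 \cup P_2 \cup \{e_3\}$: using $2$-connectivity of $\Om - x - v$ together with a second representative edge in $\Sigma_1$ or $\Sigma_2$, I would find a path joining an internal vertex of $P_1$ to an internal vertex of $P_2$ that avoids $u$, $v$, and $x$; Lemma \ref{lem:shortcut_an_odd_theta_U_24} then delivers the required $U_{2,4}$ minor in $F(\Om-x)$.

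The main obstacle is the degenerate configuration in which $|\Sigma_1| = 1$ or $|\Sigma_2| = 1$ and the unique edge of that class is $ux$, so that deleting $x$ collapses a class in $\delta(u)$. Under the standing assumption of Subcase $3$ that no uncommitted vertex of $\Om$ leaves three unbalancing classes in $\delta(u)$, I would rule out this configuration by showing it would either force a $2$-element cocircuit in $F(\Om)$ (contradicting $3$-connectivity of $F(\Om)$) or place $x$ in the role played by $v$ in Subcases $1$ and $2$, which have already been dispatched.
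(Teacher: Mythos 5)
Your overall strategy (exhibit a $U_{2,4}$ minor in $F(\Om-x)$ and invoke Lemma \ref{lem:x_committed_iff_U24}, splitting on $|S|$ and the sizes of $\Sigma_1,\Sigma_2$) is the paper's strategy, and your treatment of $|S|\geq 2$ with both $\Sigma_i$ nondegenerate is essentially the paper's argument. The gap is in your $|S|=1$ branch. You propose to finish that case by always producing a shortcut of the contrabalanced theta $P_1\cup P_2\cup\{e_3\}$, citing 2-connectivity of $\Om-x-v$. This cannot work as stated: the shortcut must avoid $u$ as well as $v$ and $x$, and $\{u,v,x\}$ may perfectly well be a 3-cut of the 3-connected graph $\Om$ separating the interiors of $P_1$ and $P_2$. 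The obstruction is not merely technical. When $|S|=1$, the graph $\Om-x$ has exactly three unbalancing classes in $\delta(u)$, and it can be a fat theta; in that case $F(\Om-x)$ is graphic, so \emph{no} $U_{2,4}$ minor exists and no construction of this kind can succeed. Any argument that the shortcut exists would already presuppose the conclusion that $x$ is committed.

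The paper resolves precisely this subcase not by a construction but by the standing assumption of Subcase 3: an uncommitted vertex whose deletion leaves three unbalancing classes in $\delta(u)$ has already been handled in Subcases 1 and 2 (via Lemma \ref{lem:G-v_isafattheta}), so no such vertex exists, and hence $x$ must be committed. You do invoke this standing assumption, but only for the narrower degenerate configuration in which some $|\Sigma_i|=1$ and its unique edge is $ux$; you need it for the entire case $|S|=1$, $|\Sigma_1|,|\Sigma_2|\geq 2$. Two smaller points: the paths $P_1,P_2$ you build through $\Om-x-u$ need not be internally disjoint, so you must argue separately that their union with $e_3$ contains a contrabalanced theta before speaking of shortcuts; and the remaining degenerate case $|S|=1$ with some $|\Sigma_i|=1$ is disposed of in the paper by observing that the $\Sigma_i$ edge and the $S$ edge form a series pair, contradicting 3-connectivity of $F(\Om)$ --- consistent with, but more specific than, your closing sketch.
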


\begin{proof}[Proof of Claim] 
If $|S| \geq 3$, then since there is a $u$-$v$ path in $H-x$ for every $x \in W$, we easily find a $U_{2,4}$ minor in $F(\Om - x)$.  
If $|S| = 2$, and both $|\Sigma_1|$ and $|\Sigma_2|$ are at least two, then again, for every $x \in W$ there is a $u$-$v$ path through $H - x$, and so a $U_{2,4}$ minor in $F(\Om-x)$.  
Hence every $x \in W$ is committed in these cases.  

So suppose $|S|=2$ and one of $\Sigma_1$ or $\Sigma_2$ has size one, say $|\Sigma_1|=1$.  
Let $z$ to be the endpoint in $W$ of the edge in $\Sigma_1$.  
Then $\Om-z$ has three unbalancing classes remaining in $\delta(u)$, and $\Om-z$ is a fat theta so $z$ is not committed.  
This contradicts our assumption that no such vertex exists (having already dealt with this case in the previous section).  

So suppose now $|S|=1$.  
If both $|\Sigma_1|$ and $|\Sigma_2|$ are at least two, then for any $x \in W$, there are three unbalancing classes in $\Om-x$, so it must be that $x$ is committed (else we again contradict our assumption that there is no uncommitted vertex whose deletion leaves three unbalancing classes).  
So finally suppose one of $\Sigma_1$ or $\Sigma_2$ has size one, say, \wolog, $|\Sigma_1|=1$.  
Then the edge in $\Sigma_1$ is in series with the edge in $S$, contradicting the fact the $F(\Om)$ is 3-connected.  
\end{proof}

Since $H$ is a pinch with signature contained in $\delta(u)$ and $H$ has every vertex aside from $u$ and $v$ committed, we may apply an $H$-reduction. 
The biased graph $\re(H,\Om)$ has two vertices.  
This completes the proof of Theorem \ref{mainthm:main_bal_vertex_rep}.  
\qed

\section*{Acknowledgement}

We wish to express our thanks to the two anonymous referees for their careful reading of the manuscript. Their thoughtful suggestions have resulted in a much improved paper.

\bibliography{Matroids1.bib} 

\begin{thebibliography}{10}

\bibitem{RongPaper}
Rong Chen, Matthew DeVos, Daryl Funk, and Irene Pivotto.
\newblock Graphical representations of graphic frame matroids.
\newblock {\em Graphs and Combinatorics}, pages 1--12, 2015.

\bibitem{MR0307951}
T.~A. Dowling.
\newblock A class of geometric lattices based on finite groups.
\newblock {\em J. Combinatorial Theory Ser. B}, 14:61--86, 1973.

\bibitem{funkthesis}
Daryl Funk.
\newblock {\em On excluded minors and biased graph representations of frame
  matroids}.
\newblock Thesis, Simon Fraser University, Spring 2015.

\bibitem{solvingrotasconj}
James~F. Geelen, A.~M.~H. Gerards, and Geoff Whittle.
\newblock Solving {R}ota's conjecture.
\newblock {\em Notices of the AMS}, 61(7):736--743, August 2014.

\bibitem{MR3032922}
Bertrand Guenin, Irene Pivotto, and Paul Wollan.
\newblock Relationships between pairs of representations of signed binary
  matroids.
\newblock {\em SIAM J. Discrete Math.}, 27(1):329--341, 2013.

\bibitem{MR3100270}
Torina Lewis, Jenny McNulty, Nancy~Ann Neudauer, Talmage~James Reid, and Laura
  Sheppardson.
\newblock Bicircular matroid designs.
\newblock {\em Ars Combin.}, 110:513--523, 2013.

\bibitem{MR0505702}
Laurence~R. Matthews.
\newblock Bicircular matroids.
\newblock {\em Quart. J. Math. Oxford Ser. (2)}, 28(110):213--227, 1977.

\bibitem{MR1892972}
Nancy~Ann Neudauer.
\newblock Graph representations of a bicircular matroid.
\newblock {\em Discrete Appl. Math.}, 118(3):249--262, 2002.

\bibitem{oxley:mt2011}
James Oxley.
\newblock {\em Matroid theory}, volume~21 of {\em Oxford Graduate Texts in
  Mathematics}.
\newblock Oxford University Press, Oxford, second edition, 2011.

\bibitem{MR2344133}
Daniel Slilaty.
\newblock Projective-planar signed graphs and tangled signed graphs.
\newblock {\em J. Combin. Theory Ser. B}, 97(5):693--717, 2007.

\bibitem{slilaty:Decompositionssignedgraphicmatroids}
Daniel Slilaty and Hongxun Qin.
\newblock Decompositions of signed-graphic matroids.
\newblock {\em Discrete Math.}, 307(17-18):2187--2199, 2007.

\bibitem{MR2394450}
Daniel Slilaty and Hongxun Qin.
\newblock The signed-graphic representations of wheels and whirls.
\newblock {\em Discrete Math.}, 308(10):1816--1825, 2008.

\bibitem{MR2245651}
Daniel~C. Slilaty.
\newblock Bias matroids with unique graphical representations.
\newblock {\em Discrete Math.}, 306(12):1253--1256, 2006.

\bibitem{MR815399}
Donald~K. Wagner.
\newblock Connectivity in bicircular matroids.
\newblock {\em J. Combin. Theory Ser. B}, 39(3):308--324, 1985.

\bibitem{MR635702}
Thomas Zaslavsky.
\newblock Characterizations of signed graphs.
\newblock {\em J. Graph Theory}, 5(4):401--406, 1981.

\bibitem{MR1088626}
Thomas Zaslavsky.
\newblock Biased graphs. {II}. {T}he three matroids.
\newblock {\em J. Combin. Theory Ser. B}, 51(1):46--72, 1991.

\bibitem{MR1273951}
Thomas Zaslavsky.
\newblock Frame matroids and biased graphs.
\newblock {\em European J. Combin.}, 15(3):303--307, 1994.

\bibitem{MR2017726}
Thomas Zaslavsky.
\newblock Biased graphs. {IV}. {G}eometrical realizations.
\newblock {\em J. Combin. Theory Ser. B}, 89(2):231--297, 2003.

\end{thebibliography}
\bibliographystyle{plain} 

\end{document}